\newcommand{\cale}{{\mathcal{E}}}
\newcommand{\ideala}{{\mathfrak a}}
\newcommand{\idealp}{{\mathfrak p}}
\newcommand{\idealm}{{\mathfrak m}}
\newcommand{\aut}{{\rm Aut}}
\newcommand{\autsc}{{\mathscr{A}\! ut}}
\newcommand{\fix}{\rm Fix}
\newcommand{\autpol}{{\rm Aut_\K(\pol)}}
\newcommand{\autpolh}{{\rm Aut^h(\pol)}}
\newcommand{\afg}{{\rm Aff(\pol)}}
\newcommand{\jon}{{\rm J(\pol)}}
\newcommand{\sing}{{\rm Sing}}
\newcommand{\bas}{{\rm Bs}}
\newcommand{\bir}{{\rm Bir}}
\newcommand{\pgl}{{\rm PGL}}
\newcommand{\hei}{{\rm ht}}
\newcommand{\len}{{\rm len}}
\newcommand{\Der}{{\rm Der}}
\newcommand{\eig}{{\rm \E}}
\newcommand{\eiga}{{\rm Eing}}
\newcommand{\car}{{\rm char}}
\renewcommand{\P}{{\mathbb P}}
\newcommand{\C}{{\mathbb C}}
\newcommand{\E}{{\mathbb E}}
\newcommand{\F}{{\mathbb F}}
\newcommand{\K}{{\Bbbk}}
\newcommand{\R}{{\mathbb R}}
\newcommand{\Q}{{\mathbb Q}}
\newcommand{\Z}{{\mathbb Z}}
\newcommand{\pol}{{\K[x,y]}}
\newcommand{\fra}{{\K(x,y)}}
\newcommand{\derpol}{{\rm Der}_\K(\pol)}
\newcommand{\deig}{\rm {\mathbb e}}
\newcommand{\tor}{\dashrightarrow}
\newtheorem{THM}{Theorem}
\newtheorem{thm}{Theorem}[section]
\newtheorem{pro}[thm]{Proposition}
\newtheorem{cor}[thm]{Corollary}
\newtheorem{lem}[thm]{Lemma}
\newtheorem{con}[thm]{Conjecture}
\newtheorem{CON}{Conjecture}
\theoremstyle{remark}
\newtheorem{rem}[thm]{Remark}
\newtheorem{rems}[thm]{Remarks}
\newtheorem{exa}[thm]{Example}
\newtheorem{exas}[thm]{Examples}
\begin{document}

\title{On the Automorphism Group of a polynomial differential ring in two variables}
%\author{Rene Baltazar and Iv\'an Pan}
%\today
% \keywords{foliations, surface singularities}
% \subjclass[2010]{37F75, 32S25, 32S65}
\maketitle
\begin{center}
{\sc Rene Baltazar}\footnote{Research of R. Baltazar  was partially supported by FAPERGS, of Brasil.} and {\sc Ivan Pan}\footnote{Research of I. Pan was partially supported by ANII and PEDECIBA, of Uruguay.}
\end{center}

\begin{abstract}
We consider differential rings of the form $(\Bbbk[x,y],D)$, where $\Bbbk$ is an algebraically closed field of characteristic zero and $D:\Bbbk[x,y]\to \Bbbk[x,y]$ is a $\Bbbk$-derivation. We study the Automorphism Group of such a ring and give criteria for deciding whether that group is an algebraic group. In most cases, from that study we deduce a primary classification of this type of differential ring up to conjugation with a polynomial automorphism.     
  \end{abstract}
\section{introduction}\label{sec_intro}

Let $R=A[x_1\ldots,x_n]$ be a polynomial ring over a commutative ring $A$ and denote by $\aut_A(R)$ the group of $A$-automorphisms of $R$. Denote by $\Der_A(R)$ the $A$-module which consists of all $A$-derivations of $R$.

A \emph{polynomial differential ring} supported on $R$ is a pair $(R,D)$ where $D\in\Der_A(R)$. The \emph{Automorphism Group} of $(R,D)$ is the subgroup of $\aut_A(R)$ consisting of maps commuting with $D$. In other words, $\aut_A(D)$ is the isotropy group relative to the natural action of $\aut_A(R)$ on $\Der_A(R)$ defined by conjugation. 

In the case where $A=\K$ is a field of characteristic zero, an element in $\Der_\K(R)$ carries out with a geometric significance for such an element may be thought of as a vector field on a affine space and then $\aut_\K(R)$ corresponds to the ``polynomial symmetries'' of that vector field.

If in addition $\K$ is the real or complex field, $\R$ or $\C$, respectively, then $D$ may be associated with a (singular) algebraic foliation for which $\aut_\K(D)$ represents a special subgroup of its polynomial symmetries. The geometric meaning of a $\K$-derivation is considerably increased for such fields, and  most of our motivation comes from that context.

On the other hand, the classification problem for polynomial differential rings on $R$ is far from being fixed even in the special case where $A=\K$ and $n=2$; and moreover, little is known about what kind of automorphism group one may expect to appear in that context. The aim of this work is to give a primary (and quite rudimentary) such classification by specifying which kind of automorphism group one may expect to have. We will assume $\K$ to be algebraically closed of characteristic zero.  

Since the ring $R=\K[x,y]$ will remain unchanged throughout all the paper, we will always refer to $\aut(D)$ as the \emph{isotropy} of $D$ instead of speaking about the automorphism group of $(\K[x,y],D)$. All derivations will be assumed to be over $\K$.     

This paper was motivated by a question of Daniel Levcovitz who asked the first author about when the group $\aut(D)$ is algebraic, and it was strongly inspired by the paper \cite{BlSt} by J. Blanc and I. Stampfli which was used several times throughout the present work. As we will see we could answer quite reasonably Levcovitz's question. 

Our results and their corresponding proofs rely on the existence of $D$-stable principal ideals of height 1. More precisely, that a $\K$-derivation $D:\pol\to\pol$ leaves invariant such an ideal is the same as saying there exists $f\in\pol\setminus\K$ such that $D(f)=\lambda f$ for some $\lambda\in\pol$; we will say $f$ is an \emph{eigenvector} of $D$ and $\lambda$ its corresponding \emph{eigenvalue}. The subset of $\pol$ consisting of eigenvectors with null eigenvalue is the so-called \emph{kernel} $\ker D$ of $D$: it is a $\K$-subalgebra of $\pol$.

Denote by $\deig(D)$ the number of $D$-stable reduced principal ideals of height 1 (for more details related to that number see section \ref{sec_dae}).

Since we intend to classify derivations up to conjugation we need to consider eigenvectors, or more generally polynomials, up to apply suitable automorphisms: we will say $f$ is \emph{equivalent} to $g$ if there is $\varphi\in\autpol$ such that $\varphi(g)=f$. In the case where $f$ is equivalent to $x$ we say $f$ is \emph{rectifiable}.

The main results of the paper are the following theorems, where $\K$ denotes, as we have already said, an algebraically closed field of characteristic zero:

\begin{THM}\label{thm_A}
  Let $D$ be a nonzero derivation of $\pol$. Assume $D$ satisfies one of the following properties:

  a) $0<\deig(D)<\infty$ and $D$ admits an eigenvector which is not equivalent to an element in $\K[x]$.

  b) $D$ admits an eigenvector with null eigenvalue which is not equivalent to an element in $\K[x]$; in particular $\deig(D)=\infty$.

  Then $\aut(D)$ is an algebraic group.
\end{THM}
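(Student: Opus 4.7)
The key input is the Blanc--Stampfli theorem \cite{BlSt}: for a reduced closed plane curve $C\subset\mathbb{A}^2$ whose defining polynomial is not equivalent to an element of $\K[x]$, the set--stabilizer $\aut(\pol)_C=\{\varphi\in\aut(\pol):\varphi(C)=C\}$ is a closed algebraic subgroup of the ind--group $\aut(\pol)$. Since commutation with $D$ is a polynomial condition on the coefficients of $\varphi$, it automatically cuts out a closed subgroup. The strategy in both cases is therefore to realize $\aut(D)$ as a finite extension of (resp.\ an extension of) a closed subgroup of such a stabilizer.

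For part (a), the set $\Sigma$ of $D$--stable reduced principal ideals of height $1$ is finite and non--empty, so $\aut(D)$ acts on $\Sigma$ by permutations with a finite--index kernel $H\subseteq\aut(D)$. A standard UFD calculation with $D(f)=\lambda f$ in characteristic zero shows that each irreducible factor of the given eigenvector $f$ again generates a $D$--stable principal ideal, so the reduced polynomial $f_{\mathrm{red}}$ generates an element of $\Sigma$; moreover $f_{\mathrm{red}}$ is still not equivalent to an element of $\K[x]$, because an automorphism sending it into $\K[x]$ would (by unique factorization in $\K[x]$ over an algebraically closed field) send each of its irreducible factors, and hence $f$ itself, into $\K[x]$. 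Blanc--Stampfli then gives that $\aut(\pol)_{V(f_{\mathrm{red}})}$ is algebraic, $H$ is closed in it, hence algebraic, and $\aut(D)$ is a finite extension of $H$ and therefore algebraic.

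For part (b), since $\ker D\neq\K$, the classical Nagata--Nowicki theorem yields $\ker D=\K[h]$ for some $h\in\pol$, and one writes $f=g(h)$ with $g\in\K[t]$. Any $\varphi\in\aut(D)$ preserves $\ker D$, inducing a restriction homomorphism $\psi:\aut(D)\to\aut_\K(\K[h])\cong\K^*\ltimes\K$, whose kernel $K=\{\varphi\in\aut(D):\varphi(h)=h\}$ set--stabilizes $V(h)$. The hypothesis that $f$ is not equivalent to an element of $\K[x]$ forces $h$ to have the same property (otherwise $\varphi(h)=p(x)$ would yield $\varphi(f)=g(p(x))\in\K[x]$), and the factor argument of part (a) transfers this to $h_{\mathrm{red}}$. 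Blanc--Stampfli therefore makes $\aut(\pol)_{V(h)}=\aut(\pol)_{V(h_{\mathrm{red}})}$ an algebraic group, and $K$ a closed, hence algebraic, subgroup of it. Then $\aut(D)$ sits as an extension of an algebraic subgroup of $\K^*\ltimes\K$ by the algebraic group $K$, and a standard argument in the ind--group $\aut(\pol)$ (bounded dimension together with a choice of lifts modulo $K$, or construction of a section over the image of $\psi$) forces $\aut(D)$ itself to be algebraic.

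The main obstacle in both parts is the passage from the eigenvector $f$ (resp.\ the kernel generator $h$) to an appropriate reduced polynomial to which Blanc--Stampfli can be applied; once this bookkeeping is in place, the algebraic group structure falls out essentially for free. A secondary technical point in part (b) is controlling the extension inside the ind--group, since elements of $\aut(D)$ outside $K$ move $V(h)$ to other fibers $V(h-c)$ and thus do not directly stabilize a single curve.
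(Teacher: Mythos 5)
Your part (a) is correct and is essentially the paper's own argument (Theorem \ref{thm_finite}): the permutation action of $\aut(D)$ on the finite set $\eig(D)$ has a finite-index kernel that stabilizes the reduced curve attached to the given eigenvector, that kernel is algebraic by \cite[Thm.~1]{BlSt} after the same reduction to $f_{\mathrm{red}}$, and Lemma \ref{lem_ind2} finishes. No complaints there.

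Part (b) is where you diverge from the paper, and where there is a gap. The paper does \emph{not} argue via the stabilizer of the single curve $V(h)$ plus an extension; it embeds $\aut(D)$ as a closed subgroup of $\autpolh=\{\varphi:\varphi(h)=\alpha h+\beta\}$ and proves (Theorem \ref{thm_autpolh}) that $\autpolh$ itself is algebraic whenever $h$ is not equivalent to an element of $\K[x]$ --- and that proof requires the whole pencil-preserving birational machinery of \S\ref{subsec_pencil} (Proposition \ref{pro_blst3}, Corollary \ref{cor_blst4}), i.e.\ the stabilizer of the fibration $h:\K^2\to\K$, not of one fiber. Your route replaces this by: $K=\ker\psi$ is algebraic (fine, via \cite[Thm.~1]{BlSt} applied to $h_{\mathrm{red}}$), the image of $\psi$ lies in $\K^*\rtimes\K$, and then ``a standard argument (bounded dimension together with a choice of lifts modulo $K$, or construction of a section)'' gives algebraicity of $\aut(D)$. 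That last step is exactly the crux and neither suggested justification works as stated: a set-theoretic lift of each element of $\psi(\aut(D))$ may a priori have unbounded degree (algebraicity in $\autpol$ means containment in some $\aut(\pol)_d$, not finite dimension of the quotient), and a section of $\psi$ need not exist. The paper's only tools of this kind, Lemma \ref{lem_ind1}$(b)$ and Lemma \ref{lem_ind2}, cover finite kernel and finite index respectively, and neither applies here. The danger is not hypothetical: $\{(x,y+P(x)):P\in\K[x]\}$ is a closed ind-subgroup abstractly isomorphic to a subgroup of $\K^*\rtimes\K$ yet not algebraic, so purely group-theoretic control of the quotient proves nothing.

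The gap is repairable, but you must actually do it: since $\psi$ is a morphism of ind-groups, $\psi(\aut(D)\cap\aut(\pol)_n)$ is constructible in $\K^*\rtimes\K$ by Chevalley, the subgroup it generates is closed and obtained in boundedly many products (hence is $\psi(\aut(D)\cap\aut(\pol)_N)$ for some $N$, using $\deg\rho^{-1}=\deg\rho$ and submultiplicativity of degree), the resulting increasing chain of closed subgroups of $\K^*\rtimes\K$ stabilizes, and then $\aut(D)=(\aut(D)\cap\aut(\pol)_N)\cdot K\subset\aut(\pol)_{Nd}$. If you supply this, your proof of (b) becomes a genuinely lighter alternative to the paper's, needing only the curve-stabilizer theorem of \cite{BlSt} rather than the pencil results; as written, the decisive step is asserted rather than proved.
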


A derivation $D:\pol\to\pol$ is said to be \emph{locally nilpotent} if for any $f\in\pol$ there exists a positive integer $n=n(f)$ such that $D^n(f)=0$.

\begin{THM}\label{thm_B}
  Let $D$ be a nonzero derivation of $\pol$. Then $D$ admits an eigenvector with null eigenvalue which is equivalent to an element in $\K[x]$ if and only if $D$ is conjugate to $b\partial_y$ for some $b\in\pol\setminus\{0\}$. Moreover, in that case we have:
	
	a) If $b\in\K[x]$, then $D$ is locally nilpotent and $\aut(D)$ is not an algebraic group.
	
	b) If $\deg_y b\geq 1$, then $\aut(D)$ is an algebraic group.
\end{THM}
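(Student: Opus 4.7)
The plan is to first establish the equivalence by conjugating to the canonical form, and then treat parts (a) and (b) separately; in both cases I may assume $D=b\partial_y$ outright, since conjugation preserves both local nilpotence and the isomorphism class of $\aut(D)$.

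For the equivalence, the implication $(\Leftarrow)$ is immediate: if $D=\varphi(b\partial_y)\varphi^{-1}$, then $\varphi(x)$ is a null eigenvector of $D$ equivalent to $x\in\K[x]$. For $(\Rightarrow)$, suppose $D(f)=0$ with $\varphi(p)=f$ for some nonconstant $p\in\K[x]$. Then $\tilde D:=\varphi^{-1}D\varphi$ annihilates $p$, and since $\tilde D(p(x))=p'(x)\tilde D(x)$ with $p'(x)\neq 0$ in the integral domain $\pol$, this forces $\tilde D(x)=0$; hence $\tilde D=\tilde D(y)\partial_y=b\partial_y$ with $b\neq 0$.

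For part (a), with $b=b(x)\in\K[x]$, local nilpotence is a direct degree count, as $D^k=b(x)^k\partial_y^k$ annihilates any $f$ with $\deg_y f<k$. To rule out algebraicity of $\aut(D)$, I would exhibit the infinite-dimensional subgroup $\{\varphi_R:(x,y)\mapsto(x,y+R(x))\mid R\in\K[x]\}$: each $\varphi_R$ is an automorphism with Jacobian $1$, fixes $b(x)$, and commutes with $\partial_y$, hence with $D$. Since this family is parameterized by the infinite-dimensional additive group $(\K[x],+)$, no finite-dimensional algebraic structure on $\aut(D)$ is possible.

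For part (b), the strategy is to realize $\aut(D)$ as a Zariski closed subset of a three-dimensional algebraic group. For any $\varphi\in\aut(D)$, the relation $D\varphi=\varphi D$ applied to $x$ gives $D(\varphi(x))=0$, so $\varphi(x)\in\ker D=\K[x]$; combined with the Jacobian condition this forces the triangular form $\varphi(x)=\alpha x+\beta$, $\varphi(y)=\gamma y+R(x)$ with $\alpha,\gamma\in\K^*$, $\beta\in\K$, $R\in\K[x]$. Applying $D\varphi=\varphi D$ to $y$ then yields the identity
\[
b(\alpha x+\beta,\gamma y+R(x))=\gamma\,b(x,y).
\]
Writing $b=\sum_{i=0}^{d} b_i(x)y^i$ with $b_d\neq 0$, the coefficient of $y^d$ gives the constraint $\gamma^{d-1}b_d(\alpha x+\beta)=b_d(x)$, a polynomial equation on $(\alpha,\beta,\gamma)$. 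Crucially, because $d\geq 1$ the coefficient of $y^{d-1}$ is also available, and its leading coefficient in $R$ equals $d\,b_d(x)\neq 0$, so it determines $R$ uniquely as a rational function of $(\alpha,\beta,\gamma)$. The remaining lower-order coefficients, together with the requirement that this rational $R$ be a polynomial, impose further polynomial conditions on $(\alpha,\beta,\gamma)$. Thus $\aut(D)$ is parameterized by a Zariski closed subset of $\K^*\times\K\times\K^*$, and is therefore an algebraic group. The main obstacle I anticipate is the bookkeeping in this last step: verifying that the $y^{d-1}$-formula genuinely captures every element of $\aut(D)$, and that substituting it into the lower $y^j$-coefficient equations produces bona fide polynomial constraints in $(\alpha,\beta,\gamma)$ alone, so that no hidden source of infinite-dimensionality slips through.
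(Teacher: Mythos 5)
Your proof is correct and follows essentially the same route as the paper: conjugate to $b\partial_y$ (your $(\Rightarrow)$ argument via $\tilde D(p(x))=p'(x)\tilde D(x)$ is in fact slightly more careful than the paper's), exhibit the unbounded-degree family $(x,y+R(x))$ for part (a), and for part (b) reduce every element of $\aut(D)$ to the triangular form $(\alpha x+\beta,\gamma y+R(x))$ and bound $\deg R$ from a coefficient of the identity $b(\alpha x+\beta,\gamma y+R)=\gamma b$. The only (harmless) variation is that you read off the bound from the $y^{d-1}$-coefficient, which is linear in $R$ and determines it uniquely, whereas the paper uses the $y^{0}$-coefficient $\sum_i b_i(\alpha x+\beta)R^i=\gamma b_0$; either way one gets the uniform degree bound that, combined with Corollary \ref{cor_ind3}, yields algebraicity, so the ``bookkeeping'' you worry about at the end is not actually needed.
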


If $D\in\derpol$ is a nonzero derivation, then there exists a polynomial $g$ of maximal degree such that $D=gD_1$, where $D_1\in\derpol$ and $g$ is unique up to multiply by elements in $\K^*$; when $g\in\K^*$ we say $D$ is \emph{irreducible}.

If $h\in\pol$ is an arbitrary polynomial we denote by $\aut(h)$ the subgroup of $\autpol$ whose elements are the automorphisms $\varphi$ such that $\varphi(h)=\alpha h$ for some $\alpha\in\K^*$.

%For the notions used in the statement below that we have not yet introduced, we refer the reader to the next section. 

\begin{THM}\label{thm_C}
  Let $D$ be a nonzero derivation of $\pol$; write $D=gD_1$, with $g\in\pol$ and $D_1$ irreducible. Assume $\aut(D)$ is an algebraic group. Then we have:

  $a)$ If $\deig(D)<\infty$ and $D$ admits an irreducible eigenvector which is not equivalent to $x$, then either $\aut(D)$ is finite or it contains a copy of $\K^*$ and there are integers $p,q,\ell\geq 1$, with $p,q$ coprime, such that $D$ is conjugate to $pxa_1\partial_x+qyb_1\partial_y$, where
  \[a_1=\sum_{i=0}^\ell a_{ii}x^{qi}y^{ip},\ b_1=\sum_{i=0}^\ell b_{ii}x^{qi}y^{ip},\]
with $qa_{\ell\ell}+pb_{\ell\ell}\neq 0$.

$b)$ If $\ker D$ contains a non-constant polynomial which is not equivalent to an element in $\K[x]$, then there exists $h\in\ker D$ such that $\aut(D)$ is a closed subgroup of $\aut(h)$. Moreover, one of the following assertions holds:

 \begin{itemize}
  \item[$i$)]$ \aut(h)$ is finite and $\aut(D_1)\subset\aut(h)$ or $\aut(D_1)=\K$.
    \item[$ii$)]  $\aut(h)$ is infinite, $\aut(D_1)$ is isomorphic to $\K^*$ and there are coprime integers $p,q>1$ and $c\in\K^*$ such that $h$ is equivalent to $x^q-cy^p$ and $D_1$ is conjugate to $cpy^{p-1}\partial_x-qx^{q-1}\partial_y$.
\item[$iii)$] $\aut(h)$ is infinite, $\aut(D_1)$ is isomorphic to $\K^*$ and there are coprime integers $p,q> 1$ and $c\in\K^*$ such that $h$ is equivalent to $x^qy^p-c$ and $D_1$ is conjugate to $px\partial_x-qy\partial_y$.

\item[$iv)$] $\aut(h)$ is infinite, $\aut(D_1)$ is isomorphic to $\K^*\rtimes \Z/2\Z$ and there is $c\in\K^*$ such that $h$ is equivalent to $xy-c$ and $D_1$ is conjugate to $x\partial_x-y\partial_y$.

 \item[$v)$] $\aut(h)$ is infinite, $\aut(D_1)\subset\aut(h)$ and $h$ is equivalent to a polynomial of the form $x^{n_1}h_2^{n_2}\cdots h_\ell^{n_\ell}$,  with $\ell\geq 2$ and $h_i$ rectifiable and not belonging to $\K[x]$, $i=2,\ldots,\ell$.
\end{itemize}
\end{THM}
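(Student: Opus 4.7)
For part (a), the finiteness of $\deig(D)$ lets $\aut(D)$ permute the finite set of $D$-stable reduced principal ideals; the kernel $H\subset\aut(D)$ of this permutation action has finite index. Since the given irreducible eigenvector $f$ generates one of these ideals, $H\subset\aut(f)$. Algebraicity of $\aut(D)$ makes $H$ algebraic, and if its identity component $H^\circ$ is trivial then $\aut(D)$ is finite. Otherwise, because $f$ is irreducible and not equivalent to $x$, the description of $\aut(f)$ in \cite{BlSt} forces $H^\circ$ to be a one-dimensional torus; after conjugation by a polynomial automorphism we may assume it acts linearly as $(x,y)\mapsto(\lambda^p x,\lambda^{-q} y)$ with $p,q\ge 1$ coprime. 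Imposing $\phi_\lambda^*\circ D=D\circ\phi_\lambda^*$ then forces the coefficients of $\partial_x$ and $\partial_y$ in $D$ to be torus-invariant of the correct pure weight, i.e., elements of $\K[x^qy^p]$, yielding $D=pxa_1\partial_x+qyb_1\partial_y$ with $a_1,b_1\in\K[x^qy^p]$. The condition $qa_{\ell\ell}+pb_{\ell\ell}\neq 0$ is a maximality condition on $\ell$ obtained from $D=gD_1$ by demanding that the top $(x^qy^p)^\ell$ term of $D_1$ not be absorbed into $g$.

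For part (b), we first construct $h$. Since $\ker D$ contains a non-constant polynomial and is factorially closed in $\pol$ of transcendence degree one, the dimension-two theory provides $\ker D=\K[h_0]$ for some $h_0$; by hypothesis we may choose $h_0$ not equivalent to an element of $\K[x]$. Every $\varphi\in\aut(D)$ acts on $\ker D$ by an affine substitution $h_0\mapsto\alpha(\varphi)h_0+\beta(\varphi)$, producing a homomorphism $\rho:\aut(D)\to\K\rtimes\K^*$ with algebraic image. Provided this image has no non-trivial unipotent part, a single translation of $h_0$ absorbs the $\beta$-component and yields $h\in\ker D$ with $\aut(D)\subset\aut(h)$. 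A non-trivial unipotent part would supply a $\K^+$-subgroup of $\aut(D)$ whose infinitesimal generator is a locally nilpotent $\K$-derivation commuting with $D$; this would contradict our choice of $h_0$ via Theorem B, since such a commuting structure conjugates $D$ into the form $b\partial_y$ and forces $\ker D$ to be equivalent to $\K[x]$.

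With $h$ fixed, the classification of $\aut(h)$ from \cite{BlSt} splits into the five cases. If $\aut(h)$ is finite, so is $\aut(D)\subset\aut(h)$ (case (i); the alternative statement for $\aut(D_1)$ accommodates the situation in which the common factor $g$ gives $D_1$ extra scaling symmetries not preserving $h$). If $\aut(h)$ is infinite, the Blanc-Stampfli list restricts $h$ to the three irreducible normal forms $x^q-cy^p$, $x^qy^p-c$, $xy-c$, or to the reducible product $x^{n_1}h_2^{n_2}\cdots h_\ell^{n_\ell}$ of rectifiable factors. In each of the first three, the irreducible derivation $D_1$ annihilating $h$ must be a scalar multiple of the Hamiltonian $-\partial_y h\,\partial_x+\partial_x h\,\partial_y$; a direct computation of its normalizer yields the asserted forms of $D_1$ and $\aut(D_1)\cong\K^*$, the extra involution in (iv) coming from $(x,y)\mapsto(y,x)$. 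In case (v), each rectifiable factor $h_i$ is itself a Darboux polynomial of $D_1$, so any $\varphi\in\aut(D_1)$ permutes the ideals $(h_i)$; using that $\aut(D_1)$ is algebraic and that each factor contributes a character, one concludes $\aut(D_1)\subset\aut(h)$.

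The main obstacle is the first step of part (b): pinning down $h$ so that $\aut(D)\subset\aut(h)$. Ruling out a unipotent component in the $\aut(D)$-action on $\ker D$ requires Theorem B to convert such a component into the normal form $b\partial_y$ and then to recognize that this makes $\ker D$ equivalent to $\K[x]$, contradicting the hypothesis. Once $h$ is selected, the remaining cases (i)--(v) follow from \cite{BlSt} combined with explicit normal-form analysis of irreducible derivations admitting $h$ as a first integral.
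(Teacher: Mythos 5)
Your architecture matches the paper's: Theorem~\ref{thm_C} is deduced there from Theorem~\ref{thm_finite} (permutation action on the finite set of Darboux ideals, an infinite-order element landing in its kernel, then the Blanc--Stampfli classification of $\aut(f)$) and from Theorem~\ref{thm_ker} ($\ker D=\K[h]$, the embedding of $\aut(D)$ into $\autpolh$, and Theorem~\ref{thm_autpolh}). However, two steps of your proposal have genuine gaps. In part $(a)$ you assert that the one-dimensional torus can be conjugated so as to act with weights $(p,-q)$. The Blanc--Stampfli description of $\aut(f)$ for an irreducible non-rectifiable $f$ equally allows the torus $G_{p,q}$ with both weights positive (the case $f$ equivalent to $x^q-cy^p$, $p,q>1$), and this must be excluded: the paper does so by showing that equivariance would then force $D=\bar{\gamma}(px\partial_x+qy\partial_y)$, which admits the infinite family of eigenvectors $x^q-\bar{c}y^p$ and contradicts $\deig(D)<\infty$. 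Without that exclusion your normal form is not established.

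In part $(b)$, your elimination of a unipotent component in the image of $\aut(D)\to\K^*\rtimes\K$ rests on the claim that a locally nilpotent derivation $E$ commuting with $D$ ``conjugates $D$ into the form $b\partial_y$,'' invoking Theorem~\ref{thm_B}. That implication is not justified, and Theorem~\ref{thm_B} cannot supply it: its relevant direction presupposes the rectifiable kernel element you are trying to rule out. The step can be repaired without Theorem~\ref{thm_B}: $E$ preserves $\ker D=\K[h_0]$ and restricts to a locally nilpotent derivation of $\K[h_0]\simeq\K[t]$, so $E(h_0)\in\K$; if $E(h_0)\neq 0$ then $h_0$ is a slice for $E$ and hence a coordinate, while if $E(h_0)=0$ then $h_0\in\ker E=\K[w]$ with $w$ a coordinate by Rentschler's theorem --- either way contradicting the choice of $h_0$. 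Note also that your reading of case $(i)$ is off: the alternative ``$\aut(D_1)=\K$'' is not about extra scaling symmetries coming from the factor $g$ (those would produce $\K^*$, not $\K$); it is the paper's case $a_2)$ of Theorem~\ref{thm_ker}, namely a possible additive group translating the fibers of $h$ (cf.\ Remark~\ref{rem_artal}), which the paper chooses not to exclude.
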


Note that every derivation $D:\pol\to\pol$ extends as a derivation of the \emph{fraction field} $\K(x,y)$ of $\pol$. If $f,g\in\pol$ are irreducible polynomials without nontrivial common factors, then the rational function $f/g$ is null under $D$ if and only if $f$ and $g$ are eigenvectors of equal eigenvalue; in this case $\alpha f+g$ is an eigenvector of equal eigenvalue for any $\alpha\in\K$. A classical result due to J. G. Darboux (Theorem \ref{thm_darboux}) asserts that if $\deig(D)=\infty$, then $D$ annuls non-constant rational functions; the first part of Theorem D is nothing else than a reinterpretation of that result.

For an irreducible polynomial $f$  we define its \emph{genus} to be the geometric genus of the (Zariski closure in the projective plane of the normalization of the) plane curve $f=0$.

\begin{THM}\label{thm_D}
  Let $D$ be a nonzero derivation of $\pol$ such that $\deig(D)=\infty$ and $\ker D=\K$. Then there exist at least two eigenvectors $f,g\in\K[x,y]$, with the same eigenvalue, such that all members of the $1$-parameter family of polynomials $\alpha f+ g$, except for at most a finite number of them, are irreducible and have equal genus. Furthermore, if in addition such a genus is greater or equal to $1$, then $\aut(D)$ is an algebraic group. 
\end{THM}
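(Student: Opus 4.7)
The plan is to combine Darboux's theorem with a pencil-theoretic analysis, and then study $\aut(D)$ via its natural action on the field of rational first integrals. Since $\deig(D)=\infty$, Theorem \ref{thm_darboux} guarantees that the subfield $L:=\{h\in\K(x,y):D(h)=0\}$ strictly contains $\K$; L\"uroth's theorem then gives $L=\K(F)$ for some primitive $F\in\K(x,y)$. Fixing a representation $F=f/g$ with $f,g\in\pol$ coprime (and $g$ necessarily non-constant, since $\ker D=\K$), one reads off from $D(F)=0$ that $D(f)=\lambda f$ and $D(g)=\lambda g$ for a common $\lambda\in\pol\setminus\{0\}$. Hence every member of the pencil $\{\alpha f+g:\alpha\in\K\}$ is an eigenvector of $D$ with eigenvalue $\lambda$.

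To control this pencil geometrically, I would resolve the base locus of the rational map $(f:g)\colon\P^2\dashrightarrow\P^1$: a finite sequence of blow-ups yields a morphism $\pi\colon S\to\P^1$ from a smooth projective surface. Primitivity of $F$ forces the generic fiber of $\pi$ to be geometrically irreducible, and in characteristic zero it is smooth of some geometric genus $\gamma$. Generic smoothness then provides a cofinite subset $U\subset\P^1$ such that for $\alpha\in U$ the fiber $\pi^{-1}(\alpha)$ is smooth irreducible and maps birationally onto the projective closure of $C_\alpha:=\{\alpha f+g=0\}$; consequently $\alpha f+g$ is irreducible with geometric genus $\gamma$. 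The finite set $E:=\P^1\setminus U$ accounts for reducible, non-reduced, or more singular pencil members, and this establishes the first assertion of the theorem.

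Assume now $\gamma\geq 1$. Since every $\varphi\in\aut(D)$ preserves $L$, and $\varphi^*F$ must again generate $L$ over $\K$, there exists a unique $\mu_\varphi\in\pgl_2(\K)$ with $\varphi^*F=\mu_\varphi(F)$; this yields a group homomorphism $\rho\colon\aut(D)\to\pgl_2(\K)$. Because automorphisms of $\A^2$ preserve both irreducibility and geometric genus of plane curves, the image of $\rho$ must stabilize the finite set $E$, and is thus contained in a closed algebraic subgroup of $\pgl_2$. On the kernel side, $\varphi\in\ker\rho$ means $(\varphi^*f)g=(\varphi^*g)f$; coprimality together with the fact that $\varphi^*f$ and $\varphi^*g$ remain irreducible (preserved by any ring automorphism of the UFD $\pol$) forces $\varphi^*f=cf$ and $\varphi^*g=cg$ for a common $c\in\K^*$. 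Hence for any fixed $\alpha_0\in U\cap\K$ and $h_0:=\alpha_0 f+g$ one has $\ker\rho\subseteq\aut(h_0)$.

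The conclusion then follows by invoking \cite{BlSt}: the polynomial $h_0$ is irreducible and its zero locus has geometric genus $\gamma\geq 1$, so in particular $h_0$ is not equivalent to any element of $\K[x]$, and $\aut(h_0)$ is an algebraic subgroup of $\autpol$. Thus $\ker\rho$ is a closed subgroup of the algebraic group $\aut(h_0)$, while $\mathrm{im}(\rho)$ is algebraic in $\pgl_2$, so $\aut(D)$, being an extension of the latter by the former, is itself algebraic. The most delicate part of this plan is the geometric input: one must guarantee simultaneously the irreducibility \emph{and} the constancy of the geometric genus on a cofinite subset of the pencil, which requires careful bookkeeping of the resolved morphism $\pi$ and its degenerate fibers. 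The appeal to \cite{BlSt} is a secondary subtlety, since $h_0$ must not fall into their list of exceptional curves --- and this is precisely what the hypothesis $\gamma\geq 1$ ensures.
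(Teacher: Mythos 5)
Your first assertion is proved essentially as in the paper: Darboux plus L\"uroth (the paper's notion of a \emph{minimal} first integral) produce a primitive $F=f/g$ whose numerator and denominator are eigenvectors with a common eigenvalue, and resolving the base locus of $(f:g)\colon\P^2\tor\P^1$ together with Stein factorization and generic smoothness yields the cofinite set of irreducible members of constant genus; this is exactly Lemma \ref{lem_irr} and the first half of the paper's proof. For the second assertion you take a genuinely different route. The paper never introduces the homomorphism $\rho\colon\aut(D)\to\pgl(2,\K)$: it observes that every element of $\aut(D)$ permutes the members of the pencil $\Lambda$ spanned by $f$ and $g$, so that $\aut(D)$ sits as a closed subgroup of $\bir((\P^2,L_\infty),\Lambda)$, and then invokes the adapted Blanc--Stampfli link analysis of \S\ref{subsec_pencil} (Proposition \ref{pro_blst3} and Corollary \ref{cor_blst4}): positive genus rules out the case of a pencil of lines, so the entire pencil-preserving group is conjugate into the automorphism group of a natural completion, hence of bounded degree, hence algebraic.

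The gap in your version is the closing step, ``$\aut(D)$, being an extension of the latter by the former, is itself algebraic.'' For a subgroup of the ind-group $\autpol$, being algebraic means being closed and contained in some $\aut(\pol)_d$ (Corollary \ref{cor_ind3}), and an abstract extension of an algebraic group by an algebraic group need not satisfy this: the coset representatives lying over the elements of $\mathrm{im}(\rho)$ could a priori have unbounded degree. As an indication that the purely group-theoretic statement is false, note that $\{(x+P(y),y)\,:\,P\in\K[y]\}$ is, as an abstract group, a $\Q$-vector space of the same dimension as $(\K,+)$, hence an abstract extension of an algebraic group by the trivial group, yet it is not an algebraic subgroup of $\autpol$. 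The paper's Lemma \ref{lem_ind1}$\,b)$ only covers \emph{finite} kernels, and even to apply an upgraded version you would have to check that $\rho$ is a morphism of ind-groups, which you do not address. The natural repair is a degree bound: from $\varphi(F)=\mu_\varphi(F)$ and coprimality one gets $\varphi(f)=e(af+bg)$, $\varphi(g)=e(cf+dg)$, so $\varphi$ sends the genus-$\gamma$ curve $h_0=0$ to another pencil member of degree at most $\max(\deg f,\deg g)$ --- and bounding the degree of automorphisms with this property is precisely what the link-decomposition machinery of \S\ref{subsec_pencil} achieves, so that machinery is not really avoidable. Two smaller slips: $f$ and $g$ need not be irreducible (only coprime), so the kernel computation should rest on coprimality alone, which suffices; and stabilizing the finite set $E$ only places $\mathrm{im}(\rho)$ inside a closed subgroup of $\pgl(2,\K)$ (vacuously so if $E$ has at most two points), it does not show that the image itself is closed.
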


As an easy consequence of \cite[Thm. A]{CMP}, it follows that the next conjecture (which deals with the last case, i.e. $\deig(D)=0$) is true when $\K=\C$:
\begin{CON}\label{conA}
Let $D$ be a nonzero derivation of $\pol$. If $\deig(D)=0$, then $\aut(D)$ is finite.
\end{CON}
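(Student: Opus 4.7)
The hint already given in the paper is that this is an easy consequence of \cite[Thm.~A]{CMP} when $\K=\C$, so the plan is to translate the hypothesis $\deig(D)=0$ into a statement about the induced algebraic foliation on $\PP^2$, apply the cited result to conclude over $\C$, and then attempt to extend from $\C$ to an arbitrary algebraically closed field $\K$ of characteristic zero.

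Write $D=D_1\partial_x+D_2\partial_y$ and consider the rational $1$-form $\omega=D_2\,dx-D_1\,dy$ on $\A^2$. Homogenizing, $\omega$ determines a singular algebraic foliation $\calf$ on $\PP^2$. An irreducible reduced polynomial $f\in\pol\setminus\K$ is an eigenvector of $D$ if and only if the closure in $\PP^2$ of the affine curve $\{f=0\}$ is $\calf$-invariant. Hence $\deig(D)=0$ means that no affine irreducible algebraic curve is $\calf$-invariant, and therefore $\calf$ admits at most one invariant algebraic curve in $\PP^2$, namely (possibly) the line at infinity $L_\infty$; in particular $\calf$ has only finitely many invariant algebraic curves.

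Every $\varphi\in\aut(D)$ is a polynomial automorphism of $\A^2$ commuting with $D$, and thus extends to a birational symmetry of $\calf$ preserving $L_\infty$. In this way $\aut(D)$ embeds into the group of birational symmetries of $\calf$. The result \cite[Thm.~A]{CMP} (over $\C$) asserts that such a symmetry group must be finite, which settles the conjecture when $\K=\C$.

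The main obstacle is the passage from $\C$ to a general $\K$ of characteristic zero. A natural route is a Lefschetz-principle argument: the derivation $D$ is defined over a finitely generated subfield $\K_0\subset\K$ which can be embedded into $\C$, and one would like an infinite sequence of elements of $\aut(D)$ to produce, after specialization, an infinite subgroup of $\aut(D_{\C})$, contradicting the complex case. The genuinely delicate point is not the foliation-theoretic input but showing that \emph{finiteness of $\aut(D)$ is a field-independent property} of the pair $(\pol,D)$. I would try to realize $\aut(D)$ as the $\K$-points of a group scheme of finite type over $\K_0$, using the structure theory of $\autpol$ to bound the degree of any symmetry (for example via Amalgamated/Jung--van der Kulk decompositions), and then argue that the resulting scheme has the same dimension and number of irreducible components after any algebraically closed extension; only then can one conclude via the already-established case $\K=\C$.
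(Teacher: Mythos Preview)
This statement is presented in the paper as a \emph{conjecture}, not a theorem; the paper offers no proof for general $\K$. The only assertion actually made is the sentence immediately preceding it: when $\K=\C$ the result is an easy consequence of \cite[Thm.~A]{CMP}. Your first three paragraphs reproduce exactly that deduction and match the paper's indication for the complex case.

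For arbitrary $\K$ there is nothing in the paper to compare against, and your proposed Lefschetz reduction has a genuine gap as written. You want to realise $\aut(D)$ as the $\K$-points of a group scheme of \emph{finite type} over $\K_0$, but that already requires a uniform bound on the degree of every element of $\aut(D)$. By Corollary~\ref{cor_ind3} such a bound is equivalent to $\aut(D)$ being an algebraic group, which is essentially the content one is trying to establish; neither the Jung--van der Kulk theorem nor any other structural fact about $\autpol$ furnishes this bound for free. Without it, an infinite $\aut(D)$ could in principle consist of automorphisms of unbounded degree, and your sketch does not explain how such a configuration would specialise to an infinite subgroup of $\aut(D_\C)$. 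This obstacle is exactly why the paper records the statement as a conjecture rather than a theorem.
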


We have organized the paper in a series of five sections, where this introduction is the first of them. More precisely, Section 2 is devoted to giving the preliminary notions concerning the isotropy group associated to a polynomial derivation in two variables, some results about algebraic groups and the so-called ind-groups, and to adapt the results in \cite[\S 3]{BlSt} to a context we will use later.

In the first part of section 3, we treat the case where $D$ admits, up to multiply by nonzero scalars, a finite nonzero number of irreducible eigenvectors and study under what situations either at least one of them is not rectifiable or at least two of the rectifiable ones are algebraically independent over $\K$. Later in section 3, we analyze the case where $\ker D$ contains a rectifiable element; in that section we also give specific results about locally nilpotent derivations (see \S \ref{loc_nil}) as well as about de so-called Shamsuddin derivations (see \S \ref{sham}).

In Section 4 we treat the case where $\ker D\neq\K$.  Theorems \ref{thm_A} and \ref{thm_C} are both consequence of Theorems \ref{thm_finite} and  \ref{thm_ker}, and Theorem \ref{thm_B} is proven after Example \ref{exa_locnil} and is essentially a consequence of Proposition \ref{pro_locnil}.

Finally, in section 5 we consider the remaining case, i.e. $\deig(D)=\infty$ and $\ker D=\K$, and prove Theorem \ref{thm_D}.

\noindent{\bf Acknowledgment}: The second author thanks Alvaro Rittatore for many useful conversations during the elaboration of this paper. He also thanks Thiago Fassarella, Amilcar Pacheco and Jorge Vitorio for their advise concerning some points treated there.

\section{Preparatory material}

In this section we collect all auxiliary results we will need to prove the main theorems stated in the introduction. In order to be self-contained let us first introduce some basic notions for which we will follow \cite[\S 4.1 and 4.2]{Ku}.

An \emph{ind-variety} is a countable union of algebraic varieties $X=\cup_{n=1}^\infty X_n$, over $\K$, such that $X_i$ is a closed subvariety of $X_{i+1}$ for all $i\geq 1$, and where (unlike what it was done in \cite[\S 4.1 and 4.2]{Ku}, see \cite[\S 0]{St}) $X$ is always endowed with the corresponding \emph{inductive topology}, i.e. $F\subset X$ is closed if and only if $F\cap X_n$ is closed for every $n$. We will write $X=\lim_n X_n$ to mean all the preceding data; when all the $X_i'$s are affine their union $X$ itself is said to be affine.

An ind-variety $X=\lim_n X_n$ is an algebraic variety if and only if $X=X_n$ for some $n$. 

A map $\eta: X=\lim X_n\to Y=\lim_m Y_m$ between two ind-varieties is said to be a \emph{morphism} if for every $n$ there is $m=m(n)$ such that $\eta$ induces (by restriction) a morphism of algebraic varieties $X_n\to Y_m$. The morphism $\eta$ is an \emph{isomorphism} if it is bijective and its inverse map is also a morphism.

Note that if $X=\lim_n X_n$, as ind-variety, and $n_1,n_2,\ldots$ is an increasing sequence we may define another ind-variety structure on $X$ by setting $X_m':=\cup_{j\leq m} X_{n_j}$, $X=\lim_m X_m'$. An standard reasoning shows that the two structures on $X$ are isomorphic via the identity map.

By definition, an \emph{ind-subvariety} of an ind-variety $X=\lim_n X_n$ is a closed subset $Z\subset X$ with the natural structure of ind-variety given by $Z_n:=Z\cap X_n$. 

An ind-variety $G=\lim_n G_n$ is said to be an \emph{ind-group} if it is a group such that the map $G\times G\to G$ defined  by $(g,h)\mapsto gh^{-1}$ is a morphism of ind-varieties, where on $G\times G$ we have considered any structure isomorphic to the one given as $(G\times G)_n=G_n\times G_n$; note that $G_n$ is not required to be a group. A morphism of ind-groups is a homomorphism of groups which is a morphism of ind-varieties.

An ind-group  $G=\lim_n G_n$  is an algebraic group if and only if it is an algebraic variety, i.e. when $G=G_n$ for some $n$. An \emph{algebraic subgroup} of $G$ is then a subgroup of $G$ which is an algebraic variety with the ind-subvariety structure induced by $G$.  

\subsection{Generalities about the isotropy group of a derivation}

Let $\K$ be an algebraically closed field of characteristic 0. We denote by $\derpol$ the $\pol$-module of $\K$-derivations in $\pol$; as one knows $\derpol=\pol\partial_x\oplus\pol\partial_y$, where $\partial_x, \partial_y$ are the formal derivatives with respect to $x$ and $y$, respectively. Note that the group $\aut_\K(\pol)$ of polynomial $\K$-linear automorphisms acts on $\derpol$ by conjugation; for short we will refer to elements in that group simply as automorphisms, and to elements in $\derpol$ simply as derivations. If $D:\K[x,y]\to\K[x,y]$ is a derivation we denote by $\aut(D)$ the isotropy subgroup  with respect to the action we have just referred to, i.e. $\aut(D)$ is the subgroup of $\K$-automotphisms $\rho:\pol\to\pol$ such that $\rho D=D\rho$.

The isotropy of $D$ may be seen as a normal subgroup of a bigger subgroup $\autsc(D)$ of $\autpol$ which consists of those automorphisms $\rho$ for which there exists $\alpha\in\K^*$ such that $\rho D=\alpha D\rho$. We have $\autsc(D)/\aut(D)\subset\K^*$.

Analogously, if $h\in\K[x,y]$ is a non-constant polynomial, then we denote $\aut(h)$ the subgroup of $\autpol$ which consists of the automorphisms $\rho$ such that $\rho(h)=\alpha h$ for some $\alpha\in\K^*$. The subset ${\rm Fix}(h)\subset\aut(h)$ whose elements are the automorfisms fixing $h$ is a normal subgroup such that $\aut(h)/{\rm Fix}(h)\subset\K^*$. Note that if $h_0$ is the product of the irreducible factors of $h$ (it is unique up to multiply by an element of $\K^*$), then $\aut(h)\subset\aut(h_0)$ and the first group is a (not necesarily normal) finite index subgroup of the second one.

An automorphism $\rho:\K[x,y]\to\K[x,y]$ is determined by giving $\rho(x)$ and $\rho(y)$, and we will often write $\rho=(f,g)$ by meaning  $\rho(x)=f$ and $\rho(y)=g$, respectively; in this case the \emph{degree} of $\rho$, denoted by $\deg\,\rho$, is the positive integer $\max\{\deg f, \deg g\}$. We have $\autpol=\cup_{d=1}^\infty\aut(\pol)_d$, where $\aut(\pol)_d$ (without the subindex $\K$) is the set of automorphisms of degree $\leq d$. If $\rho,\sigma\in\autpol$, a well known fact says $\deg\,\rho=\deg\,\rho^{-1}$ and $\deg\,\rho\sigma\leq\deg\,\rho\deg\,\sigma$. 
    
Following \cite{Sh} and \cite{Ka} we know that $\aut(\pol)_d$ admits a natural structure of affine algebraic variety, for every $d$, in such a way that $\aut(\pol)_d$ is a closed subvariety of $\aut(\pol)_{d+1}$: roughly speaking, such a structure depends on the coefficients of the couple of polynomials defining elements in $\autpol_d$. In particular one deduces that $\autpol$ admits a structure of affine ind-variety which is compatible with the group structure, i.e. it is an affine ind-group. From now on all topological notions related to $\autpol$ will be referred to the corresponding inductive topology. 

On the other hand, and following again the second reference above, we know that  a subgroup $G$ of $\autpol$ acts on $\pol$ as an algebraic group if and only if it is closed and there is $d$ such that $G\subset \aut(\pol)_d$. Moreover, a closed subgroup of $\autpol$ is algebraic if and only if it is conjugate to a subgroup of either the so-called \emph{affine group} $\afg=\aut(\pol)_1$, or the so-called \emph{de Jonqui\`eres} group $\jon$,  which consists of automorphisms $\rho$ such that $\rho(x)=\alpha x+P(y)$, $\rho(y)=\beta y+\gamma, \alpha,\beta,\gamma\in\K, \alpha\beta\neq 0, P\in\K[y]$; in the second case such a subgroup is then conjugate to a closed subgroup of $J_d:=\jon\cap \autpol_d$, for some $d\geq 1$.

Finally, recall that a derivation $D$ is said to be simple if it does not stabilize nontrivial ideals. If $D$ is simple, then we know $\aut(D)=\{id\}$ (\cite[Thm. 1]{MePa}).

\subsection{Some remarks about ind-groups and algebraic groups}

We start by giving some results concerning ind-groups.

\begin{lem}\label{lem_ind1}
  Let $\eta:H=\lim_n H_n\to G=\lim_m G_m$ be a morphism of ind-groups.

  $a)$ If $H$ is an algebraic group, then $\eta(H)$ is closed; in particular, it is an algebraic subgroup of $G$.
  
  b$)$ If $\eta$ is onto, its  kernel is finite and $G$ is an algebraic group, then $H$ is an algebraic group.
 \end{lem}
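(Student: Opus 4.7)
For part (a), since $H$ is an algebraic group, $H=H_N$ for some $N$. The restriction $\eta|_{H_N}$ is a morphism of algebraic varieties into some $G_{m(N)}$, so its image $\eta(H)=\eta(H_N)$ is a constructible subset of $G_{m(N)}$ by Chevalley's theorem. Since $\eta(H)$ is also a subgroup of $G$, the classical fact that a constructible subgroup $S$ of an algebraic group is closed applies: if $U\subset S$ is open dense in $\overline S$, then for any $g\in\overline S$ both $U$ and $gU$ are open and dense in $\overline S$, hence meet, giving $g\in U\cdot U^{-1}\subset S$. Intersecting $\eta(H)$ with each $G_m$ then shows it is closed in the inductive topology on $G$, hence an algebraic subgroup of $G$.

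For part (b), write $G=G_M$. Each $\eta(H_n)$ is constructible in $G$ by Chevalley's theorem, so the chain of closures $\overline{\eta(H_n)}$ stabilizes at some $n_0$ by noetherianity of $G$. Surjectivity of $\eta$ gives $G=\bigcup_n\eta(H_n)\subset\overline{\eta(H_{n_0})}$, so $\eta(H_{n_0})$ is a dense constructible subset of $G$ and therefore contains a dense open subset $U$ of $G$. A standard argument yields $G^0=U\cdot U^{-1}$ for the identity component, and enlarging $n_0$ to some $n_1$ by using that inversion and multiplication are ind-variety morphisms produces $G^0\subset\eta(H_{n_1})$. Lifting finitely many representatives of the remaining components from $\eta(H_{n_0})$ and translating by them delivers some $n_2$ with $\eta(H_{n_2})=G$. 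Finally, the finite kernel $K$ lies in some $H_{n_0'}$, and since right multiplication by each $k\in K$ is an ind-variety morphism, $H_{n_2}\cdot K\subset H_N$ for some $N$; surjectivity of $\eta|_{H_{n_2}}$ up to $K$ then forces $H=H_{n_2}\cdot K\subset H_N$, so $H$ is algebraic.

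The main difficulty I expect is the step in part (b) that upgrades the density of $\eta(H_{n_0})$ in $G$ to the set-theoretic equality $\eta(H_{n_1})=G$ at a finite stage of the filtration. Noetherianity alone only yields the closure; turning this into an equality forces one to exploit the ind-group axiom, namely that products of $H_n$'s land in a single $H_{n'}$, so that multiplication respects the filtration. The subsequent bookkeeping for the finitely many components of $G$ and the absorption of the finite kernel $K$ into the filtration are conceptually easier, but cannot be bypassed, since $(H_n)$ is not a priori stable under either multiplication or $K$-translation.
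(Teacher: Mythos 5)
Your part (a) is essentially the paper's argument: the image lands in a single $G_m$, is constructible by Chevalley, and a constructible subgroup of an algebraic group is closed. For part (b) you take a genuinely different route. The paper replaces each $H_n$ by $KH_n$, forms the quotient varieties $H_n/K$ (using that $K$ is finite), observes that $G=\lim_n H_n/K$ as an ind-variety, and concludes from $G$ being an algebraic variety that the chain $H_n/K$ stabilizes, hence so does $H_n$. You instead work entirely on the image side: stabilization of the closures $\overline{\eta(H_n)}$ by noetherianity of $G$, density of $\eta(H_{n_0})$, the $U\cdot U^{-1}$ argument to capture $G^0$ (note you only need the inclusion $G^0\subset U\cdot U^{-1}$, not the equality you assert, since $U$ may meet several components), translation by finitely many component representatives to get $\eta(H_{n_2})=G$ exactly, and finally absorption of the finite kernel into the filtration via $H=K\cdot H_{n_2}\subset H_N$. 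Both arguments are correct. Your version avoids invoking the existence of the quotient variety $H_n/K$ and the identification of $G$ with $\lim_n H_n/K$ (the points where the paper's proof is most terse), at the price of more bookkeeping with the filtration; as you correctly diagnose, the essential input in both cases is the ind-group axiom that multiplication and inversion respect the filtration, which is what turns density statements into equalities at a finite stage.
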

 \begin{proof}

  If $H$ is algebraic, then its image under $\eta$ is contained in some $G_m$. Hence $\eta(H)$ is constructible. By an analogous reasoning as (for example) \cite[\S 7.4]{Ham} we show $\eta(H)$ is a closed algebraic group, which proves $a)$.

  To prove $b)$ let us denote by $K$ the kernel of $\eta$ and assume $G$ to be
  an algebraic group.

  Without loss of generality we may suppose  $K\subset H_1$. Moreover, by replacing $H_n$ with $KH_n$, if necessary, we may also suppose every $H_n$ is $K$-stable.

  Now, the quotient $H_n/K$ is an algebraic variety (i.e. a quasi projective variety over $\K$) and $H_n/K\subset H_{n+1}/K$ is closed for $n\geq 1$.

  Since $G=\lim_n H_n/K$ as ind-variety we conclude $H_n/K=H_{n+1}/K$ for $n\gg 0$, hence $H_n\subset H_{n+1}$ for $n\gg 0$. Thus $H$ is algebraic.
\end{proof}

\begin{cor}\label{cor_ind3}
  Let $D\in\derpol$ be a derivation and let $h\in\K[x,y]$. Then we have the following assertions:

  a$)$ $\aut(D)$ is an ind-subgroup of $\autpol$.

  b$)$ $\autsc(D)$ and $ \aut(h)$ are isomorphic to ind-subgroups of $\autpol\times\K^*$.

In particular, any of the subgroups above is an algebraic group if and only if it is contained in $\aut(\pol)_d$ for some $d\geq 1$.
\end{cor}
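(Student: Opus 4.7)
The plan is to exhibit each of the three groups as a closed subset of $\autpol$ (respectively $\autpol\times\K^*$) cut out on every bounded-degree piece by polynomial equations, and then derive the final ``in particular'' from the Shafarevich--Kambayashi characterization, recalled in the previous subsection, of which closed subgroups of $\autpol$ are algebraic.

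For part $a)$, writing $D=a\partial_x+b\partial_y$ and using the fact that a $\K$-derivation of $\pol$ is determined by its values on $x$ and $y$, the commutation relation $\rho D=D\rho$ on $\rho=(f,g)\in\autpol$ is equivalent to the pair of polynomial identities $\rho(a)=D(f)$ and $\rho(b)=D(g)$. Expanding both sides as polynomials in $x,y$ and matching coefficients yields a finite system of polynomial equations in the affine coordinates of $\autpol_d$; thus $\aut(D)\cap\autpol_d$ is Zariski closed in $\autpol_d$ for every $d\geq 1$, which is exactly the content of being an ind-subgroup.

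For part $b)$, since $D\neq 0$ (resp.\ $h\notin\K$), the scalar $\alpha\in\K^*$ appearing in $\rho D=\alpha D\rho$ (resp.\ $\rho(h)=\alpha h$) is uniquely determined by $\rho$, so I obtain injective group homomorphisms
\[
\autsc(D)\hookrightarrow\autpol\times\K^*,\qquad \aut(h)\hookrightarrow\autpol\times\K^*,\qquad \rho\longmapsto(\rho,\alpha(\rho)).
\]
The image of the first is cut out on $\autpol_d\times\K^*$ by the polynomial equations $D(f)-\alpha\,\rho(a)=0$ and $D(g)-\alpha\,\rho(b)=0$ in the coordinates $(f,g,\alpha)$, and similarly the image of the second is cut out by $\rho(h)-\alpha h=0$; both are therefore closed, giving the desired ind-subgroup structures.

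Finally, an ind-subgroup contained in $\autpol_d$ (or in $\autpol_d\times\K^*$, which is itself an affine algebraic variety) is a closed subvariety of an affine variety and so carries a natural algebraic-group structure. Conversely, if $\aut(D)$ is algebraic, the characterization recalled earlier forces $\aut(D)\subset\autpol_d$ for some $d$; for $\autsc(D)$ and $\aut(h)$ the analogous conclusion follows by projecting to $\autpol$ via the morphism $\autpol\times\K^*\to\autpol$ and combining Lemma~\ref{lem_ind1}\,(a) with the triviality of the $\K^*$-factor. The only point I would be careful about—though it is not really an obstacle—is checking that $\alpha(\rho)$ is a regular function of $\rho$ on each bounded-degree piece; this is immediate in the embeddings above, where $\alpha$ is treated as an independent coordinate, but would become less transparent if one tried to view $\autsc(D)$ and $\aut(h)$ directly as subsets of $\autpol$.
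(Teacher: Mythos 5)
Your proposal is correct and follows essentially the same route as the paper: membership in $\aut(D)$ (resp.\ $\autsc(D)$, $\aut(h)$) is expressed by polynomial identities on the coefficients of $(f,g)$ (resp.\ of $(f,g,\alpha)$), giving closedness in each $\aut(\pol)_d$ (resp.\ $\aut(\pol)_d\times\K^*$), and the ``in particular'' is obtained from Kambayashi's characterization together with Lemma~\ref{lem_ind1} via the projection $\autpol\times\K^*\to\autpol$, exactly as in the paper. The only cosmetic difference is that you write the scalar on the other side of the commutation relation, which merely replaces $\alpha$ by $\alpha^{-1}$ and changes nothing.
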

\begin{proof}
To prove a) it suffices to show that $\aut(D)\cap \aut(\pol)_d$ is closed for every $d\geq 1$. In fact, write $D=a\partial_x+b\partial_y$, with $a,b\in\pol$. An element $\rho=(f,g)\in \autpol_d$ belongs to $\aut(D)$ if and only if 
\begin{equation}\label{eq_isotropy}
a(f,g)=a\partial_x(f)+b\partial_y(f), \ b(f,g)=a\partial_x(g)+b\partial_y(g).
\end{equation}  
These equations may be thought of as a finite number of polynomial equations, depending on the coefficients of $a$ and $b$, that the coefficients of $f$ and $g$ must satisfy. This proves $\aut(D)$ is an ind-subgroup of $\autpol$. The last assertion follows straightforward from \cite[Thm 3.1]{Ka}.

To finish the proof we only consider the assertion relative to $\autsc(D)$ because the other one may be proven analogously.

First note that the natural group homomorphism $\nu:\autsc(D)\to \autpol\times\K^*$ is a morphism of ind-varieties, where its corresponding structure is given as $\autpol\times\K^*=\lim_d (\aut(\pol)_d\times\K^*)$. By an analogous reasoning as above we obtain $\nu(\autsc(D))$ is closed in $\autpol\times\K^*$. On the other hand, the projection map $p_1:\autpol\times\K^*\to \autpol$ is a morphism of ind-groups whose restriction to $\nu(\aut(D))$ defines the inverse of $\nu$, then $\autsc(D)$ is the isomorphic image of an ind-subgroup of $\autpol\times\K^*$, and this proves b).

Now, if $\autsc(D)\subset\aut(\pol)_d$, then its image under $\nu$ is contained in $\aut(\pol)_d\times\K^*$ and then it is an algebraic group. Hence Lemma \ref{lem_ind1} gives $\autsc(D)$ is an algebraic subgroup of $\autpol$. The converse assertion is obtained by applying once \cite[Thm 3.1]{Ka}.

\end{proof}

\begin{lem}\label{lem_ind2}
  Let $H\subset\autpol$ be an ind-subgroup. If $K\subset H$ is an algebraic subgroup of finite index, then $H$ is an algebraic group.
  \end{lem}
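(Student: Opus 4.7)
The plan is to show that $H$ is contained in $\aut(\pol)_d$ for some $d\geq 1$, and then invoke Corollary \ref{cor_ind3} (or the characterization recalled from \cite{Ka}) to conclude that $H$ is an algebraic group.

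First I would use the finite index hypothesis to write $H$ as a finite disjoint union of cosets $H=\bigsqcup_{i=1}^{r}g_i K$ with $g_1,\ldots,g_r\in H$. Since $K$ is algebraic, the characterization recalled before the lemma gives an integer $d_0\geq 1$ with $K\subset\aut(\pol)_{d_0}$. Using the inequality $\deg(\rho\sigma)\leq\deg\rho\cdot\deg\sigma$ valid in $\autpol$, each coset satisfies
\[
g_iK\subset\aut(\pol)_{d_0\cdot\deg g_i}.
\]
Setting $d:=d_0\cdot\max_{1\leq i\leq r}\deg g_i$, which is finite because the union is finite, we conclude $H\subset\aut(\pol)_d$.

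Next, since $H$ is an ind-subgroup of $\autpol$, the intersection $H\cap\aut(\pol)_d$ is closed in $\aut(\pol)_d$. But this intersection is the whole of $H$, so $H$ is a closed subgroup of the affine algebraic variety $\aut(\pol)_d$; in particular $H$ is itself an algebraic variety. Being closed and contained in $\aut(\pol)_d$, the last assertion of Corollary \ref{cor_ind3} (or directly \cite[Thm. 3.1]{Ka}) ensures that $H$ is an algebraic subgroup of $\autpol$.

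There is no real obstacle to speak of in this argument; the only thing one needs to be careful about is the uniform degree bound, which is handled cleanly by the submultiplicativity of the degree together with the finiteness of the coset decomposition. All other steps are straightforward consequences of the ind-group structure of $\autpol$ already laid out in the preparatory section.
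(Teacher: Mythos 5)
Your argument is correct and is essentially the same as the paper's: both proofs use the finite coset decomposition of $H$ over $K$, the bound $K\subset\aut(\pol)_{d_0}$ coming from $K$ being algebraic, and the submultiplicativity of the degree to get a uniform bound $H\subset\aut(\pol)_d$. The only difference is that you spell out the final closedness step that the paper leaves implicit; there is nothing to correct.
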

 \begin{proof}
There exist $\varphi_1,\ldots,\varphi_\ell\in H$ such that $H=\cup_{i=1}^\ell K\varphi_i$. Since $K$ is algebraic we know there exists $d\geq 1$ such that $K\subset\aut(\pol)_d$. If $d_1$ is the maximum degree of the automorphisms $\varphi_1,\ldots,\varphi_\ell$, then $\deg(\psi)\leq dd_1$ for any $\psi\in H$ which proves this group is algebraic too and completes the proof.   
\end{proof}

The following two results are probably well known exercises we could not find in the literature we consulted.

\begin{lem}\label{lem_exe}
Let $G$ be a connected algebraic subgroup of $\K^*\rtimes \K$. If $\dim G=1$, then $G=\K^*\rtimes\{ 0\}$ or $G=\{1\}\rtimes \K$. 
\end{lem}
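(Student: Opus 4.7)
The plan is to exploit the natural projection $\pi\colon \K^*\rtimes\K\to\K^*$ onto the first factor, which is a morphism of algebraic groups, and to split the analysis according to the image $\pi(G)$. Since $G$ is connected, $\pi(G)$ is a closed connected subgroup of $\K^*\cong\G_m$, hence either trivial or all of $\K^*$.

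If $\pi(G)=\{1\}$, then $G\subseteq \{1\}\rtimes\K$, and since this ambient subgroup is itself connected and $1$-dimensional, the hypothesis $\dim G=1$ forces the equality $G=\{1\}\rtimes\K$.

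Suppose instead $\pi(G)=\K^*$. Then $\pi|_G\colon G\to\K^*$ is a surjective morphism between $1$-dimensional algebraic groups, so its kernel $G\cap(\{1\}\rtimes\K)$ is a closed $0$-dimensional, hence finite, subgroup of $\G_a$. Because $\mathrm{char}(\K)=0$ and $\G_a$ contains no nontrivial finite subgroup, this kernel is trivial and $\pi|_G$ is an isomorphism of algebraic groups $G\simeq\G_m$. Consequently $G$ is the graph $\{(\alpha,\sigma(\alpha)):\alpha\in\K^*\}$ of a regular map $\sigma\colon\K^*\to\K$, and requiring $G$ to be a subgroup translates into the cocycle identity $\sigma(\alpha\beta)=\alpha\sigma(\beta)+\sigma(\alpha)$.

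The main obstacle is the last step: trivializing this cocycle to pin down $G$. Writing $\sigma$ as a Laurent polynomial $\sum_i c_i\alpha^i$ and matching monomials in the cocycle identity forces $c_i=0$ for $i\neq 0,1$ together with $c_0+c_1=0$, so that $\sigma(\alpha)=c(1-\alpha)$ for some $c\in\K$. This cocycle is a coboundary: conjugation by $(1,c)\in\{1\}\rtimes\K$ sends the graph $\{(\alpha,c(1-\alpha))\}$ onto $\K^*\rtimes\{0\}$, yielding the desired description of $G$ in standard form and completing the proof.
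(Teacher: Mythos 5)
Your route is genuinely different from the paper's: you reduce along the projection $\pi\colon\K^*\rtimes\K\to\K^*$ and then solve the cocycle identity, whereas the paper writes $G$ as the zero locus of an irreducible Laurent polynomial $f$ and analyzes the functional equation $g(\alpha x,\alpha y+\beta)=\alpha^m\lambda(\alpha,\beta)\,g(x,y)$ satisfied by $g=x^mf$. Everything you do up to the formula $\sigma(\alpha)=c(1-\alpha)$ is correct: $\pi(G)$ is a closed connected subgroup of $\K^*$, hence trivial or everything; the kernel of $\pi|_G$ is a zero-dimensional closed subgroup of $(\K,+)$, hence trivial in characteristic zero; and the Laurent-polynomial computation of the cocycle is right.

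The problem is your last sentence. For $c\neq 0$ the subgroup $G_c=\{(\alpha,c(1-\alpha)):\alpha\in\K^*\}$ is conjugate to $\K^*\rtimes\{0\}$ by the translation $(1,c)$, but it is not equal to it, and conjugating $G$ changes $G$. What you have actually proved is that $G$ equals $\{1\}\rtimes\K$ or one of the $G_c$, $c\in\K$ — and this shows that Lemma \ref{lem_exe} is false as literally stated, since each $G_c$ with $c\neq 0$ is a connected one-dimensional closed subgroup equal to neither $\K^*\rtimes\{0\}$ nor $\{1\}\rtimes\K$. You cannot paper over this by "putting $G$ in standard form"; the honest conclusion is that the statement must be weakened to "$G=\{1\}\rtimes\K$ or $G$ is conjugate to $\K^*\rtimes\{0\}$ by a translation", i.e.\ $G\simeq\K$ or $G\simeq\K^*$. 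It is worth noting that the paper's own proof has exactly the same blind spot: in the case where $p_2(G)$ is dense it deduces $g=ax+b$, which fails for $g=cx+y-c$ (the equation of $G_c$), a polynomial of the separated form $g_1(x)+g_2(y)$ that does satisfy the functional equation. In the place where the lemma is invoked (the proof of Theorem \ref{thm_autpolh}), what is really needed is the dichotomy $G\simeq\K^*$ or $G\simeq\K$, after possibly replacing $h$ by $h-c$; so the corrected statement still serves, but as written your final step claims more than your (correct) computation delivers.
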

\begin{proof}
Since $G$ is connected and has dimension 1 there is an irreducible polynomial $f\in\K[x,x^{-1},y]$ such that $G=(f=0)$; let $m$ be the minimal positive integer such that $g:=x^m f=\sum_{i,j} a_{ij}x^iy^j\in\K[x,y]$, i.e. $(g=0)$ defines the closure of $G$ in $\K\times\K$. Denote by $p_1,p_2:\K\times\K\to\K$ the canonical projections.

  If $(\alpha,\beta)\in G$, then
  \begin{equation}\label{eq_character}
    g(\alpha x,\alpha y+\beta)=\alpha^m\lambda(\alpha,\beta) g(x,y)\end{equation} where $\lambda$ is a character of $G$.

First we prove there are no $r,s>0$ such that  $a_{rs}\neq 0$. In fact, assume, by contradiction such $r,s$ exist and let $\ell\geq s$ be the biggest integer such that $a_{r\ell}\neq 0$. Hence both projections $p_1(G)$ and $p_2(G)$ are dense in $\K$. Note that $\dim\,G=1$ implies $p_i(G)$ is dense for some $i\in\{1,2\}$, and that if $p_i(G)$ is not dense, then necessarily $p_i(G)=\{e\}$ where $e=2-i\in\K$.     

Now, on the one hand, by comparing the coefficients of $x^ry^\ell$ in both sides of (\ref{eq_character}) we deduce $\alpha^m\lambda(\alpha,\beta)=\alpha^{r+\ell}$. Then $\lambda=\alpha^{r+\ell-m}$ is a character of the tore $\K^*$. On the other hand, by expanding the left side of (\ref{eq_character}) as a polynomial in $\beta$, and taking into account that $p_2(G)$ is dense in $\K$, we get a contradiction.

Therefore $g(x,y)=g_1(x)+g_2(y)$. If $p_2(G)$ is dense in $\K$, then we deduce $g=ax+b$, with $a\in\K^*, b\in\K$.  Hence the only possibility is $-b/a=1$, i.e. $G=\{1\}\rtimes \K$.

If $p_2(G)$ is not dense in $\K$, then $p_2(G)=\{0\}$, $g=ay$, with $a\in\K^*$, and $G=\K^*\rtimes 0$.
\end{proof}

\begin{lem}\label{lem_agroup2}
An algebraic group $($over $\K=\overline{\K}$ of characteristic zero$)$ of positive dimension is not a torsion group. In particular, such an algebraic group admits an element of infinite order. 
\end{lem}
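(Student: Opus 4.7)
The plan is to first reduce to the connected case: replacing $G$ by its identity component $G^0$, one has a connected algebraic group of positive dimension, and any element of infinite order in $G^0$ is one in $G$. I would then invoke Chevalley's structure theorem to obtain a short exact sequence $1\to L\to G^0\to A\to 1$, where $L$ is a connected linear algebraic group and $A$ an abelian variety; since $\dim L+\dim A=\dim G^0\geq 1$, at least one of the two factors has positive dimension.

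If $\dim L\geq 1$, I would split on whether a maximal torus $T\subset L$ is trivial. When $\dim T\geq 1$ there is a closed embedding $\G_m\hookrightarrow L$, and the element $2\in\K^*$ has infinite order, since in characteristic zero it is not a root of unity. When $T$ is trivial, $L$ is unipotent; a connected unipotent algebraic group in characteristic zero contains a closed copy of $\G_a$ (take the last nontrivial term of its lower central series), and any nonzero element of $\G_a(\K)=(\K,+)$ has infinite order in characteristic zero. Either way $G^0$ carries a non-torsion element.

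If $\dim A\geq 1$, I would use the classical fact that a positive dimensional abelian variety over an algebraically closed field of characteristic zero admits non-torsion $\K$-rational points. Indeed, the torsion subgroup of $A(\K)$ equals $\bigcup_{n\geq 1}A[n]$, a countable union of finite sets (each $A[n]$ having order $n^{2\dim A}$), which already settles the matter by cardinality whenever $|\K|>\aleph_0$; the case $\K=\overline{\Q}$ reduces, via the Lefschetz principle, to the existence of elliptic curves over $\Q$ of positive Mordell--Weil rank. A preimage in $G^0$ of a non-torsion point of $A$ is then itself non-torsion.

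The main obstacle is this last step, namely producing non-torsion $\K$-rational points on abelian varieties when $\K$ is countable, which relies on an external input. This point is however moot in the applications in this paper, since the groups in question are closed subgroups of the affine ind-group $\autpol$ and hence affine, so only the linear case above is actually needed.
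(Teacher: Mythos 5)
Your proof follows essentially the same route as the paper's: reduce to the identity component, apply Chevalley's structure theorem, and in the affine case exhibit a copy of $\G_m$ or of $\G_a$ according to whether a maximal torus is nontrivial or the group is unipotent. This is exactly the paper's dichotomy (unipotent radical of positive dimension versus reductive), and, as you observe, the affine case is the only one actually used in the applications, since closed algebraic subgroups of $\autpol$ are affine.

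The one point where you diverge from the paper, the abelian-variety quotient, is also the one point where your argument has a defect. The cardinality argument does dispose of uncountable $\K$, but the proposed reduction of the case $\K=\overline{\Q}$ ``via the Lefschetz principle to elliptic curves over $\Q$ of positive Mordell--Weil rank'' does not work as stated: the abelian variety $A$ is a fixed positive-dimensional quotient of $G^0$, it need not be an elliptic curve nor be defined over $\Q$, and the Lefschetz principle does not convert the existence of a non-torsion point of $A(\overline{\Q})$ into a statement about elliptic curves over $\Q$. What is needed here is the theorem of Frey and Jarden that $A(\K)$ has positive (indeed infinite) rank for every positive-dimensional abelian variety over an algebraically closed field of characteristic zero, which is precisely the citation the paper makes at this step (\cite[Thm. 10.1]{FrJa}). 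Since you flag this step as an external input and correctly note that it is irrelevant for the affine groups arising in the paper, the structure of your argument is sound; only the justification of that one step should be replaced by the appropriate reference.
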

\begin{proof}
  Without loss of generality we may assume that $G$ is connected. Chevalley's Theorem \cite{Ch} asserts there is an exact sequence (see also \cite{BSU})
  \[\xymatrix{1\ar@{->}[r]&G_{af\!f}\ar@{->}[r]&G\ar@{->}[r]&A\ar@{->}[r]&0},\]
  where $G_{af\!f}$ is affine and $A$ abelian. Since $A$ has positive rank under the present hypothesis (\cite[Thm. 10.1]{FrJa}) it suffices to consider the case $G=G_{af\!f}$ is affine.

 If the unipotent radical of $G$ has positive dimension, then it contains a copy of $\K$ and we are done. Otherwise $G$ is reductive and then it contains a maximal torus of positive dimension. In the last case $G$ then contains a copy of $\K^*$, which completes the proof.
\end{proof}

If $p,q\geq 1$ are coprime positive integers we consider $\K^*$ as a subgroup of $\autpol$ in two different forms via the two actions of it on $\pol$ given, respectively, by
\[t\cdot x=t^px, t\cdot y=t^qy\]
and 
\[t\cdot x=t^px, t\cdot y=t^{-q}y;\]
we denote by $G_{p,q}$ and $G_{p,-q}$ the corresponding subgroups in $\autpol$.

Lemma \ref{lem_agroup2} may be used together with the next one:

\begin{lem}\label{lem_h}
  Let $h$ be a polynomial which is not equivalent to an element in $\K[x]$. If $\aut(h)$ is not a torsion group, then $\aut(h)$ is an algebraic group and one of the following assertions holds:
\begin{itemize}
\item[$a)$] there are coprime integers $p,q> 1$ and $c\in\K^*$ such that $h$ is equivalent to $(x^q-cy^p)^n$, for some $n\geq 1$, and $\aut(h)$ is isomorphic to $G_{p,q}$.
\item[$b)$] there are coprime integers $p,q\geq 1$ and $c\in\K^*$ such that $h$ is equivalent to $(x^qy^p-c)^n$, for some $n\geq 1$, and $\aut(h)$ is isomorphic to $G_{p,-q}$ or $G_{p,-q}\rtimes \Z/2\Z$ depending on $(p,q)\neq (1,1)$ or $(p,q)=(1,1)$, respectively.
  \item[$c$)] $h$ is equivalent to a polynomial of the form $x^{n_1}h_2^{n_2}\cdots h_\ell^{n_\ell}$, with $\ell>1$, such that $h_2,\ldots,h_\ell$ are rectifiable and at least one of them depends on $y$. 
\end{itemize}
\end{lem}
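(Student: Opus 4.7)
The plan is to prove the lemma in three stages: first establish that $\aut(h)$ is algebraic, then locate a positive-dimensional torus inside its neutral component, and finally classify the semi-invariants of that torus to obtain the three normal forms.

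\emph{Algebraicity.} Let $h_0$ denote the square-free reduction of $h$. As noted earlier in the paper, $\aut(h)$ is a finite-index subgroup of $\aut(h_0)$, so by Lemma \ref{lem_ind2} it suffices to show $\aut(h_0)$ is algebraic. The hypothesis that $h$ is not equivalent to an element of $\K[x]$ means the reduced curve $\{h_0=0\}\subset\A^2$ is not a finite union of parallel lines in any coordinate system, and the adaptations of Blanc--Stampfli developed in Section 2 then give that $\aut(h_0)$ is algebraic.

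\emph{Locating a torus.} Since $\aut(h)$ is algebraic and non-torsion, Lemma \ref{lem_agroup2} implies its neutral component $H:=\aut(h)^0$ has positive dimension. By Kambayashi's theorem we may conjugate so that $H\subset\afg$ or $H\subset\jon$; in either case $H$ contains a connected one-parameter subgroup isomorphic to $\K_+$ or $\K^*$. If $H$ contained a unipotent one, a further conjugation would put it in the form $(x,y)\mapsto(x,y+t)$, and then the semi-invariance $\varphi_t(h)=\alpha(t)h$ would force the character $\alpha$ of $\K_+$ to be trivial, giving $\partial_y h=0$ and $h\in\K[x]$, contradicting the hypothesis. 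Therefore $H$ contains a torus $T\cong\K^*$; after further conjugation, $T$ acts diagonally as $(x,y)\mapsto(t^px,t^qy)$ for some coprime integers $(p,q)\neq(0,0)$.

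\emph{Classification.} Writing $h=\sum h_{ij}x^iy^j$, the $T$-semi-invariance forces $pi+qj$ to be constant on the support. After extracting the maximal common monomial $x^ay^b$, the weighted-homogeneous remainder factors over $\K$ into irreducible terms of the shape $x^q-cy^p$ when $pq>0$, or $x^{|q|}y^p-c$ when $pq<0$. The factors $x^q-cy^p$ with $p,q>1$ are irreducible and non-rectifiable; tracking how $\aut(h)$ must permute the irreducible factors shows that the full $\aut(h)$ realizes $G_{p,q}$ precisely when $h$ has the single middle factor $(x^q-cy^p)^n$ without extraneous $x^ay^b$, yielding case (a). The parallel analysis for $pq<0$ gives case (b), with the extra $\Z/2\Z$ for $(p,q)=(1,-1)$ arising from the involution $(x,y)\leftrightarrow(y,x)$ preserving $xy-c$. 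All remaining situations---one of $p,q$ equal to $0$ or $\pm 1$, or the presence of multiple distinct constants $c_k$ or a nontrivial $x^ay^b$ factor---produce $h$ as a product of rectifiable factors in distinct directions (using $h\notin\K[x]$ to guarantee at least one depends on $y$), which is precisely case (c).

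The main obstacle I anticipate is the bookkeeping in the third stage: correctly identifying when $\aut(h)$ equals $G_{p,q}$ (or its $\Z/2\Z$-extension) versus a proper subgroup, and showing that all configurations outside cases (a) and (b) collapse into case (c). This rests on a careful analysis of how elements of $\aut(h)$ permute the irreducible factors of $h$, together with the non-torsion hypothesis, which prevents $\aut(h)$ from shrinking to a finite group and thereby escaping the stated trichotomy.
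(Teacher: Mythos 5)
Your architecture is sound and genuinely reorders the paper's argument. The paper starts from an element $\psi\in\aut(h)$ of infinite order, applies \cite[Thm. 2]{BlSt} to a single irreducible component stabilized by a power $\psi^m$ to land in one of the normal forms, and only then establishes algebraicity --- case by case, with a separate degree-bounding argument for the triangular part in case $(c)$. You instead get algebraicity up front from \cite[Thm. 1]{BlSt} applied to the reduced polynomial $h_0$ together with the finite-index inclusion $\aut(h)\subset\aut(h_0)$ (the tool you actually need for that last step is the ``contained in $\aut(\pol)_d$'' criterion of Corollary \ref{cor_ind3}, not Lemma \ref{lem_ind2}, which runs in the opposite direction), and then extract a one-parameter subgroup. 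Both routes converge on the same weighted-homogeneity computation. Two caveats on your second stage: the normal form of an action of the additive group on the plane is $(x,y+tp(x))$ with $p\in\K[x]$, not $(x,y+t)$ --- your argument survives, since the triviality of the character still yields $p(x)\partial_y h=0$ and hence $h\in\K[x]$ --- and the diagonalization of the torus invokes the linearization theorem for $\G_m$-actions on $\A^2$, a nontrivial result that is not among the tools the paper sets up (the paper sidesteps it by quoting the classification of the individual invariant curve).

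The genuine gap is in your third stage, at exactly the point you flagged. You claim that ``the presence of multiple distinct constants $c_k$'' pushes $h$ into case $(c)$, i.e.\ into a product of rectifiable factors. That fails when $p,q>1$: each factor $x^q-c_ky^p$ is then singular at the origin, hence never rectifiable, no matter how many distinct $c_k$ occur. Concretely, $h=(x^3-y^2)(x^3-2y^2)$ satisfies the hypotheses of the lemma ($\aut(h)$ contains $(t^2x,t^3y)$ for every $t\in\K^*$, so it is not a torsion group, and $h$ is not equivalent to an element of $\K[x]$ because its components are singular), yet it has two distinct non-rectifiable components and therefore matches none of the normal forms $(x^q-cy^p)^n$, $(x^qy^p-c)^n$, or a product of rectifiable factors. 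So the proposed collapse into case $(c)$ cannot work as written: one must either rule out several distinct constants (which the example shows is impossible in general) or enlarge case $(a)$ to products of the form $\prod_k(x^q-c_ky^p)^{n_k}$. You should be aware that the paper's own proof is silent at the same spot --- it deduces from $\psi^m(h_j)=\alpha_j h_j$ that every irreducible factor has the shape $x^q-c_jy^p$ and then passes directly to the single-constant normal form --- so you have correctly located the delicate step, but the resolution you sketch does not close it.
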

\begin{proof}
Let $\psi\in\aut(h)$ be an element of infinite order. We know $\psi^m$ stabilizes all irreducible components of $h$. From \cite[Thm2]{BlSt} it follows that up to conjugate $\aut(h)$ with an element in $\autpol$ we may assume $h$ to admit an irreducible decomposition of the form $h_1^{n_1}\cdots h_\ell^{n_\ell}$ in such a way one of the following situations occurs:

   $(i)$  there are $p,q\geq 1$ and $c\in\K^*$ as in the statements $a)$ or $b)$ above, where $h_1$ is of the form either $x^qy^p-c$, with $p,q>1$, or $x^qy^p-c$.

   $(ii)$ $h_1=x$.

   Suppose we are in the situation $(i)$. In the first case there $\aut(h_1)$ is $G_{p,q}$ and in the second one it is $G_{p,-q}$ or $G_{p,-q}\rtimes \Z/2\Z$ depending on $(p,q)\neq (1,1)$ or $(p,q)= (1,1)$.

   By replacing $m$ with $2m$, if necessary, we may assume $\psi^m=(t^px,t^qy)$ or $\psi^m=(t^px,t^{-q}y)$, respectively, for some $t\in\K^*$ such that $t^n\neq 1$ for any $n\geq 1$. We conclude that all irreducible factor of $h$ has the same form: indeed, for example let us assume $h_2=\sum a_{ij}x^iy^j$. Then there is $\alpha\in\K^*$ such that $\sum a_{ij}(t^{pi\pm qj}-\alpha)x^iy^j$, where the sign plus or minus correspond to elements in $G_{p,q}$ or $G_{p,-q}$, respectively. Hence $a_{ij}\neq 0$ implies $t^{pi\pm qj}-\alpha=0$. In the ``plus case''  there are at most two different pairs $(i,j),(i',j')$ such that $a_{ij},a_{i'j'}\neq 0$. We get $i'=i+nq, j'=j-np$. Then the only possibility is $(i,j)=(q,0), (i',j')=(0,p)$ and $t^{pq}=\alpha$. The ``minus case'' is similar.

    Then there is a positive integer $n\geq 1$ such that $h$ is as in the statements  $a)$ or $b)$ above; in particular $\aut(h)$ is as it is asserted there. Note that $\aut(h)$ is an algebraic group in these cases.

    Now suppose we are in the situation $(ii)$. Since $\aut(h/x^{n_1})$ has also an element of infinite order we deduce $h/x^{n_1}$ is as in $(i)$ or $(ii)$ above. As we have seen $(i)$ implies all irreducible component of $h$ has the same form hence $h$ is a product of rectifiable elements. By hypothesis $h/x^{n_1}$ depends effectively on $y$. 

    In order to complete the proof it remains to prove $\aut(h)$ is an algebraic group also in this last case. Let $\varphi\in\aut(h)$ be and arbitrary element.

    We have an exact sequence of groups $\xymatrix{1\ar@{->}[r]&K\ar@{->}[r]&\aut(h)\ar@{->}[r]&F}$, where $F$ is the permutation group associated to the set of irreducible components of $h$. Then $K\subset \aut(x)\cap\aut(h_2)\cap\cdots\cap\aut(h_\ell)$. Assume $h_2=\sum_{i=0}^na_i y^i$, with $n\geq 1$, $a_i\in\K[x]$ for all $i$ and $a_n\neq 0$.

    Notice that $\varphi\in\aut(x)$ implies $\varphi$ is an automorphism of the form $(\alpha x,\beta y+P(x))$. On the other hand, $(\alpha x,\beta y+P(x))\in\aut(h_2)$ implies
    \[a_n(\alpha x)P(x)^n+\cdots+a_{1}(\alpha x)P(x)+ a_0(\alpha x)=\gamma a_0(x)\]
    for some $\gamma\in\K^*$. If $P\neq 0$ we deduce $\deg P$ is bounded by $\max\{\deg a_i; a_i\neq 0, i=0,\ldots,n\}$. Hence $K$ is an algebraic group, so $\aut(h)$ does by Corollary \ref{cor_ind3}. 

  \end{proof}

  \begin{exas}\label{exas_rs}
    $a)$ If $h=x^ry^s$, then $\aut(h)$ is isomorphic to  $\K^*\times\K^*$ via the action on $\autpol$ given as $(\alpha,\beta)\mapsto (\alpha x,\beta y)$.

    $b)$ If $h=x^ry^s(y-x^n)$, then $\aut(h)$ is isomorphic to $\K^*$ via $\alpha\mapsto (\alpha x,\alpha^n y)$.
  \end{exas}

\subsection{Automorphisms of the plane preserving a pencil}\label{subsec_pencil}

In this subsection we rewrite some results in \cite[\S 3]{BlSt} in order to apply them to the case of automorphisms which preserve a dimension 1 lineal system of curves, that is a \emph{pencil}, when $\K$ is algebraically closed of characteristic zero. More precisely, and following the authors of {\em loc. \!cit.}, we consider \emph{natural completions} of $\K^2$ of the form $(X,B_X)$, where $X$ is the projective plane obtained by adding a projective line $B_{\P^2}=L_{\infty}$ to $\K^2=\P^2\setminus L_{\infty}$, or $X=\F_n$ is a Nagata-Hirzebruch surface, $B_X=L_n\cup E_n$ and $\K^2=\F_n\setminus B_{\F_n}$, where $L_n$ is a fiber of the fibration $\F_n\to\P^1$ and $E_n$ the $(-n)$-curve of $\F_n$; here $n\geq 1$.

We have tried to be as self-contained as possible, but for some details, and not to be excessively repetitive, we will refer the reader to consult the paper above. Moreover, except for Lemma \ref{lem_blst1}, the proofs of our results below are mere adaptations of some of the proofs therein.  

A birational map (respectively, an isomorphism) between two natural completions $\phi: (X,B)\tor (X',B')$ is a birational map $X\tor X'$ (respectively, an isomorphism $X\to X'$) which induces an isomorphism $X\setminus B\to X'\setminus B'$.

An \emph{elementary link} between two natural completions is one of the following birational maps: the blow-up $\P^2\tor \F_1$ of a point in $L_\infty$, the contraction $\F_1\to \P^2$ of the $(-1)$-curve $E_1$, a map $\F_n\tor \F_{m}$ obtained by first blowing up a point in $L_n$ and then contracting the strict transform of $L_n$; note that $m=n+1$ if the blown up point belongs to $E_n$ and $m=n-1$ otherwise. One refers to these three types of elementary links as links of type I, III and II, respectively. We know that a birational map $\varphi: (X,B)\tor (X',B')$ which is not an isomorphism decomposes into the product of a minimal number of elementary links $\varphi=\varphi_m\cdots \varphi_1$ such that $\varphi_{i+1}\varphi_{i}$ is never an isomorphism unless $\varphi$ itself does. We say that product is a \emph{reduced decomposition} of $\varphi$ into elementary links and call $m$ the \emph{length} of $\varphi$; we denote $\len(\varphi)$ that length and consider $\len(\varphi)$ to be 0 when $\varphi$ is an automorphism (see \cite[Prop. 2.10]{BlSt}).  

Let $\Lambda$ be a pencil of curves on $X$ without fixed part in $B$. Recall that the \emph{base locus} $\bas(\Lambda)$ of $\Lambda$ consists of the base points of $\Lambda$, i.e. the points which belong to all its members.

If $\varphi: (X,B)\tor (X',B')$ is a birational map we denote by $\varphi_*\Lambda$ the \emph{strict transform} of $\Lambda$ under $\varphi$, i.e $\varphi_*\Lambda$ is the pencil whose general members are the strict transform of general members of $\Lambda$. We denote by $\bir((X,B),\Lambda)$ (respectively, $\aut((X,B),\Lambda)$) the group of birational maps (respectively, automorphims) of $(X,B)$ preserving $\Lambda$.

If $A\subset X$ is an irreducible curve in $X$ which is not a fixed part of $\Lambda$ and $T\subset A$ is a finite subset, possibly empty, a general member in $\Lambda$ intersects $A\setminus T$ in a constant number of points taking into account multiplicities; we denote $(\Lambda\cdot (A\setminus T))$ that constant number. If $p\in X$, the intersection multiplicity $(\Lambda\cdot A)_p$ of $\Lambda$ and $A$ at $p$ is by definition the corresponding intersection multiplicity $(C\cdot A)_p$ of $A$ and a general member $C$ of $\Lambda$; clearly $(\Lambda\cdot A)_p\neq 0$ if and only if $p \in\bas(\Lambda)$. We will say $\Lambda$ \emph{intersects transversely} $A\setminus T$ if a general member $C$ of $\Lambda$ verifies $(C\cdot A)_p\leq 1$ for all $p\in A\setminus T$.

Finally, we recall the so-called height $\hei_C(p)$ of a curve $C\subset X$ at a point $p\in C$ given in \cite[Def. 3,3]{BlSt} and introduce the \emph{height} of $\Lambda$ at a base point $p\in\bas(\Lambda)$ to be the height $\hei_\Lambda(p)$ of a general member in $\Lambda$.

From now on we assume (by simplicity) $\Lambda$ has no fixed part, i.e., $\bas(\Lambda)$ is finite. On the other hand, any curve in the surfaces $\P^2$ or $\F_n$ ($n\geq 1$) is assumed to be projective.  

\begin{lem}\label{lem_blst1}
If $L\subset X$ is an irreducible smooth curve, then $\Lambda$ intersects transversely $L\setminus\bas(\Lambda)$.
\end{lem}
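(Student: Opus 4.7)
My plan is to study the rational map $\pi : X \dashrightarrow \PP^1$ defined by the pencil $\Lambda$, sending a point $p\notin\bas(\Lambda)$ to the unique member of $\Lambda$ through $p$; its indeterminacy locus coincides with $\bas(\Lambda)$. Restricting to $L$ yields a rational map $L\dashrightarrow\PP^1$, and since $L$ is one-dimensional and smooth (so every local ring is a DVR) while $\PP^1$ is proper, this restriction extends uniquely to a regular morphism $\widetilde{\pi}:L\to\PP^1$.

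The first case is $\widetilde{\pi}$ constant, say $\widetilde{\pi}\equiv t_0$. Then $L\setminus\bas(\Lambda)\subset C_{t_0}$ and the irreducibility of $L$ forces $L\subseteq C_{t_0}$; for any other member $C=C_t$ with $t\neq t_0$ and any $p\in L\setminus\bas(\Lambda)$, we have $\pi(p)=t_0\neq t$, so $p\notin C$ and $(C\cdot L)_p=0\leq 1$.

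The second case is $\widetilde{\pi}$ non-constant, hence a surjective morphism between smooth projective curves. Since $\car\K=0$ the morphism $\widetilde{\pi}$ is separable, and so its ramification locus $R\subset L$ is a finite set. Set $S:=\widetilde{\pi}(R)\cup\widetilde{\pi}(L\cap\bas(\Lambda))$, which is finite in $\PP^1$. For $t\notin S$ I would choose $C:=C_t$; then $\widetilde{\pi}^{-1}(t)\subset L\setminus(R\cup\bas(\Lambda))$. At each $p\in L\setminus\bas(\Lambda)$ the map $\pi$ is regular, given locally by some regular function $f$ with $C_t$ cut out by $f-t$ near $p$, so $(C_t\cdot L)_p$ equals the order of vanishing of $f|_L-t$ at $p$. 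This is precisely the ramification index of $\widetilde{\pi}$ at $p$ when $\widetilde{\pi}(p)=t$, and $0$ otherwise. By the choice of $t$ the ramification index equals $1$, so $(C_t\cdot L)_p\leq 1$ at every $p\in L\setminus\bas(\Lambda)$, as desired.

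I do not foresee a serious obstacle. The points demanding a bit of care are the identification of $(C_t\cdot L)_p$ with the local ramification index of $\widetilde{\pi}$ at a point where $\pi$ is regular, and the handling of the (essentially vacuous) case where $L$ lies inside a member of $\Lambda$. The two essential inputs are the smoothness of $L$, used to extend $\pi|_L$ to all of $L$, and the hypothesis $\car\K=0$, which guarantees that $\widetilde{\pi}$ is separable and consequently that its ramification locus is finite.
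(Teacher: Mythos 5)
Your proof is correct and follows essentially the same route as the paper's: both restrict the map to $\P^1$ defined by $\Lambda$ to the smooth curve $L$, separate the constant and non-constant cases, and conclude from the finiteness of the ramification locus in characteristic zero. The only cosmetic difference is that you compute $(C_t\cdot L)_p$ directly as a local ramification index, whereas the paper argues by contradiction using Bertini's theorem and tangency to identify the offending point as a ramification point of $\lambda|_L$.
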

\begin{proof}
  The pencil $\Lambda$ defines a rational map $\lambda:X\tor\P^1$ such that $\bas(\lambda)=\bas(\Lambda)$ and the general members of $\Lambda$ correspond to the closure of its general fibers. Assume, by contradiction, for a general point $p\in L\setminus\bas(\Lambda)$ there is a general member $C$ of $\Lambda$ such that $(C\cdot L)_p>1$. By Bertini's Theorem the curve $C$ is smooth at $p$, hence $C$ and $L$ have the same tangent line at $p$.

  Now, $\lambda$ restricts to $L$ as a morphism $L\to\P^1$. If that morphism is constant, then $L$ is part of a fiber of $\lambda$ and then $(C\cdot L)_p=0$. Otherwise, $p$ is one of its ramification points, whose number if finite, which gives a contradiction and proves the lemma.   
\end{proof}

\begin{lem}\label{lem_blst2}
  Let $\varphi\in\bir((X,B),\Lambda)$ be a birational map which is not an isomorphism. Consider a reduced decomposition $\varphi_m\cdots \varphi_1$ of $\varphi$ into elementary links and assume $\varphi$ is of type I or II; denote by $q$ the base point of $\varphi_1$. Then one of the following holds

  a$)$ $(\Lambda\cdot (L_X\setminus\{q\}))\leq 1$;

  b$)$ $(\Lambda\cdot (L_X\setminus\{q\}))> 1$, all member of $\Lambda$ is singular at $q$ and this point is the (unique) one in $L_X$ for which the height of $\Lambda$ is maximal; in particular $q\in\bas(\Lambda)$. 
\end{lem}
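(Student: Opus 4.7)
The plan is to analyze in detail how the first link $\varphi_1$ transforms the pencil $\Lambda$ and the curve $L_X$, adapting the arguments of \cite[\S 3]{BlSt}. Let $\pi:Y\to X$ be the blow-up at $q$ with exceptional divisor $E$, and denote by $\tilde L$ the strict transform of $L_X$ on $Y$. In type I ($X=\P^2$) we have $X_1=Y=\F_1$ and $\varphi_1=\pi^{-1}$; in type II ($X=\F_n$) the link $\varphi_1$ is $\pi^{-1}$ followed by the contraction $\sigma:Y\to \F_m=X_1$ of $\tilde L$. Set $\Lambda_1:=(\varphi_1)_*\Lambda$; this is still a pencil on $X_1$ without fixed part in $B_1$, and is preserved by the nontrivial map $\varphi_m\cdots\varphi_2$, which is itself a reduced decomposition by hypothesis.

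Assume (a) fails, i.e. $(\Lambda\cdot(L_X\setminus\{q\}))\geq 2$, and aim to prove (b). Let $C$ be a general member of $\Lambda$ with strict transform $\tilde C$ on $Y$, and write $m=m_q(\Lambda)=m_q(C)$. Since $L_X$ is smooth at $q$, the blow-up intersection formula gives
\[(\tilde C\cdot\tilde L)=(C\cdot L_X)-m=(\Lambda\cdot(L_X\setminus\{q\}))+\bigl((C\cdot L_X)_q-m\bigr),\]
and for general $C$ the tangent cone of $C$ at $q$ moves transversely to the fixed tangent of $L_X$, so $(C\cdot L_X)_q=m$ and hence $(\tilde C\cdot\tilde L)=(\Lambda\cdot(L_X\setminus\{q\}))\geq 2$. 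Suppose now for contradiction that $m\leq 1$, i.e. some member of $\Lambda$ is smooth at $q$: tracing $\varphi_1$ one checks that in type I the curve $E$ is not a base component of $\Lambda_1$ and $\tilde L$ is the $(-1)$-section of $\F_1$, while in type II the contraction $\sigma$ produces at most a simple base point of $\Lambda_1$ at $q':=\sigma(\tilde L)$. In either situation an alternative first link (centered at a different point of $L_X\cap C$, or obtained by swapping the first two links) transforms $\varphi_2\varphi_1$ into either an isomorphism or a single elementary link, contradicting the reducedness of the decomposition. Hence $m\geq 2$, every member of $\Lambda$ is singular at $q$, and in particular $q\in\bas(\Lambda)$.

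To complete the proof of (b), it remains to prove the uniqueness of $q$ as the point of maximal height of $\Lambda$ on $L_X$. I would argue by contradiction: suppose some $p\in L_X\setminus\{q\}$ satisfies $\hei_\Lambda(p)\geq\hei_\Lambda(q)$. By the symmetric roles of base points of maximal height on $L_X$, one can construct an alternative decomposition of $\varphi$ whose first link $\varphi_1'$ is an elementary link of the same type centered at $p$ instead of $q$. Applying the Blanc--Stampfli height comparison machinery from \cite[\S 3]{BlSt}, the length of this alternative decomposition is at most $m$, and since the height at $p$ is at least as large as at $q$, $\varphi_2'\varphi_1'$ must drop to an isomorphism or to a single elementary link, contradicting the minimality of $m$. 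Thus $q$ is the unique point on $L_X$ at which $\hei_\Lambda$ attains its maximum.

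The main obstacle I expect is making these reducedness-based arguments precise and uniform across the two link types: specifically, encoding the full tower of infinitely near base points above $q$ into $\hei_\Lambda(q)$ compatibly with the blow-up, and verifying in type II that the contraction $\sigma$ does not inadvertently create a simple base point at $q'$ which could be absorbed by a rearrangement of the decomposition in a way that spoils the uniqueness conclusion. Overcoming this requires a careful case analysis of how $\Lambda_1$ meets the fiber of $\F_m$ through $q'$, paralleling the local computations done in \cite[\S 3]{BlSt}.
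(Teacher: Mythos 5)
There is a genuine gap, and it stems from working only with the first link $\varphi_1$ instead of with the whole chain. The paper's proof is global and very short: since $\varphi_1$ is not of type III, the inverse of the last link $\varphi_m^{-1}$ has a base point $p$, and \cite[Pro. 3.4]{BlSt} applied to the reduced decomposition of $\varphi$ gives $\hei_{\varphi_*C}(p)>\hei_C(r)$ for every $r\in(L_X\setminus\{q\})\cap C$ as soon as $(\Lambda\cdot (L_X\setminus\{q\}))>1$. The decisive ingredient, which your argument never exploits, is that $\varphi$ preserves $\Lambda$ as a self-map of $(X,B)$, so $\varphi_*C$ is again a \emph{general} member of the \emph{same} pencil and has the same height as $C$ at every point of $L_X$. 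The strict inequality then forces $p$ to be a singular (hence base) point of every member of $\Lambda$, forces $p=q$ (otherwise taking $r=p$ would give $\hei_\Lambda(p)>\hei_\Lambda(p)$), and delivers the uniqueness of the maximal-height point all at once.

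By contrast, your two ``contradiction with reducedness'' steps do not close. If $m_q(\Lambda)\leq 1$ you claim that an ``alternative first link'' centered at another point of $L_X\cap C$, or a swap of the first two links, would shorten the decomposition; but the base point of the first link in a reduced decomposition is dictated by $\varphi$ itself, and a link centered elsewhere decomposes a \emph{different} birational map, so no contradiction with the minimality of $m$ is obtained. The same objection is fatal to your uniqueness argument: there is no ``symmetric role'' permitting you to recenter $\varphi_1'$ at an arbitrary point $p$ of maximal height and still decompose the same $\varphi$. Both conclusions of part b) require comparing the height profile of $\Lambda$ before and after applying the \emph{entire} chain of links, which is exactly what the invariance of $\Lambda$ under $\varphi$ combined with \cite[Pro. 3.4]{BlSt} provides; a purely local analysis of $\varphi_1$ cannot see it.
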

\begin{proof}
  Let $C\in\Lambda$ be a general member. Hence $C':=\varphi_*C$ is a general member of $\Lambda$. Notice that $\varphi_m^{-1}$ admits a base point, $p$ say, since $\varphi_1$ is not of type III. If $(\Lambda\cdot (L_X\setminus\{q\}))> 1$, then Proposition \cite[Pro. 3.4]{BlSt} implies $\hei_{C'}(p)>\hei_C(r)$ for any $r\in (L_X\setminus\{q\})\cap C$. In particular $p$ is a singular point of any element in $\Lambda$, hence $p\in\bas(\Lambda)$. Thus $p=q$ and the assertion follows. 
\end{proof}

\begin{pro}\label{pro_blst3}
  Let $(X,B)$ be a natural completion of $\K^2$ and let $\Lambda$ be a pencil of curves on $X$ (without fixed part). Then there exists a birational map $\varphi:(X,B)\tor (X',B')$ such that one of the following holds

  a$)$ $\bir((X',B'),\varphi_*\Lambda)\subset \aut(X',B')$;

  b$)$ $\varphi_*\Lambda$ intersects transversely $L_{X'}$. 
\end{pro}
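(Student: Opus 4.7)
The plan is to prove the proposition by finite induction on a non-negative integer invariant $\mu(X,B,\Lambda)$ measuring how far $\Lambda$ is from meeting the boundary transversely. The base case $\mu=0$ is immediate: if $\bir((X,B),\Lambda)\subset\aut(X,B)$ take $\varphi=\mathrm{id}$ and conclude (a); if instead $\Lambda$ already intersects $L_X$ transversely, take $\varphi=\mathrm{id}$ and conclude (b).

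For the inductive step, assume neither (a) nor (b) holds at $(X,B,\Lambda)$. The failure of (b) combined with Lemma \ref{lem_blst1} forces the non-transversality of $\Lambda$ with $L_X$ to be concentrated at some point of $\bas(\Lambda)\cap L_X$. The failure of (a) provides a non-automorphism $\psi\in\bir((X,B),\Lambda)$; fix a reduced decomposition $\psi=\psi_m\circ\cdots\circ\psi_1$ and let $q$ denote the base point of $\psi_1$ (if $\psi_1$ happens to be of type III, so $X=\F_1$ and $\psi_1$ contracts $E_1$, I would replace $\psi$ by $\psi^{-1}$ to reduce to the case of type I or II). Lemma \ref{lem_blst2} then pins down the geometric role of $q$: either $(\Lambda\cdot(L_X\setminus\{q\}))\le 1$, or every member of $\Lambda$ is singular at $q$ and $q$ is the unique point on $L_X$ at which $\hei_\Lambda$ is maximal. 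In either case $q$ is a canonical point at which to perform an elementary link. I would set $\varphi_1:=\psi_1$, let $(X_1,B_1)$ be its codomain, put $\Lambda_1:=(\varphi_1)_*\Lambda$, apply the induction hypothesis to $(X_1,B_1,\Lambda_1)$ to obtain $\varphi':(X_1,B_1)\dashrightarrow(X',B')$, and take $\varphi:=\varphi'\circ\varphi_1$.

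The main obstacle will be choosing the invariant $\mu$ so that it strictly decreases under the canonical elementary link $\varphi_1$ identified above. Two natural candidates are available. The first is the minimum length of a non-automorphism in $\bir((X,B),\Lambda)$; for this one must check that conjugation by $\varphi_1$ turns $\psi$ into a word $\varphi_1\circ\psi_m\circ\cdots\circ\psi_2$ whose reduced length is at most $m-1$, the cancellation at the right end (produced by $\psi_1\circ\psi_1^{-1}$ after rewriting) having to outweigh any new link that may need to be prepended on the left. The second is a sum of multiplicities like $\sum_{p\in B\cap\bas(\Lambda)}\hei_\Lambda(p)$, whose strict decrease is proven by a case analysis on the three types (I, II, III) of elementary link constituting $\varphi_1$, using the height maximality of $q$ provided by Lemma \ref{lem_blst2}(b) to control how multiplicities redistribute after blow-up and contraction. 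I would adopt the second route, closely following the methodology of \cite[\S 3]{BlSt}, since it avoids the delicate word-length cancellation analysis and keeps the geometric content transparent.
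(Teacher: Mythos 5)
Your overall strategy coincides with the paper's: use the failure of (a) to extract a non-automorphism whose first elementary link is pinned down by Lemma \ref{lem_blst2}, perform that link, and terminate by a non-negative integer attached to the boundary base locus (the paper uses the number of proper and infinitely near base points of $\Lambda$ on and over $L_X$, which plays the same role as your height sum). However, two steps in your sketch do not go through as written.

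First, the type III case. If $\psi_1$ is of type III there is no base point of $\psi_1$ on $X$, and your fix --- replacing $\psi$ by $\psi^{-1}$ --- does not repair this: the first link of $\psi^{-1}$ is $\psi_m^{-1}$, and since $\psi_m$ ends at $X=\F_1$ it may well be of type I, so that $\psi_m^{-1}$ is again of type III (e.g. $\psi=\psi_2\psi_1$ with $\psi_1:\F_1\to\P^2$ a contraction and $\psi_2:\P^2\tor\F_1$ a blow-up of a different boundary point). One must treat separately the case in which \emph{every} non-automorphism of $\bir((X,B),\Lambda)$ starts with a type III link; there the correct move is to conjugate by the contraction $\F_1\to\P^2$ itself, which is canonical up to $\aut(\P^2,L_\infty)$ and cancels against the type I link that must follow it in any such decomposition. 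Second, and more seriously, you never establish that $q\in\bas(\Lambda)$, which is exactly what is needed for either of your candidate invariants to strictly decrease. Alternative (a) of Lemma \ref{lem_blst2} only says $(\Lambda\cdot(L_X\setminus\{q\}))\le 1$ and gives no information placing $q$ in $\bas(\Lambda)$; if $q$ is not a base point, blowing it up changes neither the boundary base-point count nor $\sum_{p}\hei_\Lambda(p)$, and the induction does not close. The missing ingredient is the interaction with the point $p$ produced by the failure of (b): since $(\Lambda\cdot L_X)_p>1$, either $q=p$ (so $q\in\bas(\Lambda)$ outright) or $q\neq p$, in which case $(\Lambda\cdot(L_X\setminus\{q\}))\ge(\Lambda\cdot L_X)_p>1$ rules out alternative (a) and forces alternative (b) of the lemma, whence $q\in\bas(\Lambda)$ with $(\Lambda\cdot L_X)_q>1$. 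With these two points supplied, your induction runs essentially as in the paper; without them, the "canonical point" at which you link is not guaranteed to exist or to carry any of the multiplicity you need to spend.
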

\begin{proof}
  Set $G:=\bir((X,B),\Lambda)$. Let us assume that a) and b) do not hold for $\varphi=id$ and $X'=X$. Then Lemma \ref{lem_blst1} implies there is $p\in L_X\cap\bas(\Lambda)$ such that $(C\cdot L_{X})_p> 1$ for any $C\in\Lambda$. First of all, and as a first step of the proof, we will show there is an elementary link $\varphi:(X,B)\tor (X_1,B_1)$ such that $\len(\varphi g\varphi^{-1})=0$ if $\len(g)=0$ and $\len(\varphi g\varphi^{-1})<\len(g)$ otherwise. 

  If all $g\in G\setminus\aut(X,B)$ admits a reduced decomposition into elementary links $g_m\cdots g_1$, where $g_1$ is a link of type III, that is $X=\F^2$ and $g_1:\F_1\to\P^2$ is the map which contracts $E_1$ onto a point, then $g_m$ is a link of type $I$; note that $g_1$ is unique up to compose with an automorphism of $\P^2$. Since $\varphi\aut(X,B)\varphi^{-1}\subset \aut(\P^2,L_\infty)$ the map $\varphi:=g_1$ works in this case.

  On the other hand, suppose there is $g\in B\setminus\aut(X,B)$ which admits a reduced decomposition into elementary links $g_m\cdots g_1$, where $g_1:(X,B)\tor (X',B')$ is a link of type I or II; denote by $q$ the base point of $g_1$. If $q\neq p$, Lemma \ref{lem_blst2} implies that all member of $\Lambda$ is singular at $q$ and $\hei_\Lambda(q)$ is the biggest height of a general member of $\Lambda$, and $q\in\bas(\Lambda)$. If $q=p$ then we already know $q\in\bas(\Lambda)$ and that lemma assures $\hei_\Lambda(q)$ is the biggest height as before. In both cases $(\Lambda\cdot L_X)_q>1$.

 Now, take $f\in G$. If $f\in\aut(X,B)$, then $f(q)=q$ and $g_1fg_1^{-1}\in\aut(X',B')$ in which case $\varphi=g_1$ is as required.

  If $f\not\in\aut(X,B)$, then we first note that a reduced decomposition of it into elementary links can not start with a link of type III: indeed, if such a decomposition starts with $f_1:\F_1\to\P^2$, then $f_1:\F_1\tor\F_2$ and $L_1\cap E_1=\{q\}$. Part (i) of Proposition \cite[Pro. 3.4]{BlSt} applied to $\psi:=hf_1^{-1}$ and to a general member $C$ of $\Lambda$ implies there is a point $q'\in L_X$ such that $\hei_{\psi_*C}(q')>\hei_\Lambda(q)$, which gives a contradiction. Therefore a reduced decomposition of $f$ starts with a link $f_1$ of type I or II, and Lemma \ref{lem_blst2} then gives its base point is precisely $q$. So $\varphi=f_1$ is as required, and that finishes the first step of the proof.

Let us denote $\Lambda_1:=\varphi_*\Lambda$.  If a) and b) do not hold relatively to $(X_1,B_1)$, $\Lambda_1$  and $G_1=\varphi G\varphi^{-1}$, then we apply once the first step to produce an elementary link $\varphi_1:(X_1,B_1)\tor (X_2,B_2)$; and so on. Hence we may construct a sequence of elementary links
\[\xymatrix{(X,B)\ar@{-->}[r]^\varphi&(X_1,B_1)\ar@{-->}[r]^{\varphi_1}&(X_2,B_2)\ar@{-->}[r]^{\varphi_2}&\cdots},\]
a sequence of groups $G,G_1,G_2,\ldots$ and a sequence of pencils $\Lambda,\Lambda_1,\Lambda_2,\ldots$, such that $G_i$ and $\Lambda_i$ do not verify neither a) nor b).

To finish the proof it suffices to show that there is $j\geq 2$ such that $G_j\subset \aut(X_j, B_j)$ or $\Lambda_j$ intersects transversely $L_{X_j}$. In fact, if that does not occur for $j=i-1$, by construction either $\varphi_{i}$ is of type III, $X_i=\P^2$ and $\varphi_i G_j\varphi^{-1}\subset\aut(\P^2,L_\infty)$ or $\varphi$ is of type I or II and its base point belongs to $\bas(\Lambda_j)$. In the last case we deduce that the number of base points (taking into account proper and infinitely near base points) of $\Lambda_i$ in (and over) $L_{X_i}$ is less than the number of base points of $\Lambda_j$ in (and over) $L_{X_j}$. Hence the cardinal of $\bas(\Lambda)$ is finite we obtain the result.
\end{proof}

\begin{lem}\label{lem_blst3}
  Let us suppose $(X,B)=(\P^2,L_\infty)$ and let $\Lambda$ be a pencil which intersects transversely $L_\infty$ such that $(\Lambda\cdot L_\infty)=2$. Then $\bir((\P^2,L_\infty),\Lambda)\subset\aut(\P^2,L_\infty)$.
\end{lem}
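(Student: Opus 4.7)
The plan is to run a direct degree argument on polynomial automorphisms of $\A^2$. Since $\deg L_\infty=1$ and $(\Lambda\cdot L_\infty)=2$, general members of $\Lambda$ are conics, and restricting to $\A^2=\P^2\setminus L_\infty$ realizes $\Lambda$ as a $2$-dimensional vector subspace $V\subset\K[x,y]_{\leq 2}$. Any $\varphi\in\bir((\P^2,L_\infty),\Lambda)$ restricts to a polynomial automorphism $(\varphi_1(x,y),\varphi_2(x,y))$ of $\A^2$ preserving $V$ as a linear subspace, so $P\circ\varphi\in V$ for every $P\in V$ and in particular $\deg(P\circ\varphi)\leq 2$. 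The goal is to conclude $\deg\varphi_1,\deg\varphi_2\leq 1$, which places $\varphi$ in $\aut(\P^2,L_\infty)$.

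First I would examine the subspace $V^{(2)}\subset\K[x,y]_2$ of degree-$2$ homogeneous parts of elements of $V$. Its projectivization meets the Veronese conic $\mathcal{V}\subset\P(\K[x,y]_2)$ parameterizing pure squares, and a line in $\P(\K[x,y]_2)$ is tangent to $\mathcal{V}$ exactly when all its points share a common linear factor, which for our pencil translates to a base point of $\Lambda$ lying on $L_\infty$. Combining this with the fact that $\K$ is algebraically closed and with the transversality hypothesis (which renders base points on $L_\infty$ simple), one sees that after an affine change of coordinates in $\A^2$ (an element of $\aut(\P^2,L_\infty)$, so entailing no loss of generality) $V^{(2)}$ equals one of
\[\langle x^2,y^2\rangle,\quad \langle x^2,xy\rangle,\quad \langle xy\rangle,\]
according to whether $\Lambda$ has $0$, $1$, or $2$ base points on $L_\infty$, respectively.

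In each configuration I would pick members $\tilde F,\tilde G\in V$ with the prescribed top parts and exploit the bound $\deg(P\circ\varphi)\leq 2$. For $V^{(2)}=\langle x^2,y^2\rangle$, taking $\tilde F=x^2+(\text{affine linear})$, the identity $\varphi^*(\tilde F)=\varphi_1^2+(\text{lower order})$ forces $\deg\varphi_1\leq 1$, because the leading monomial $(\varphi_1^{(d_1)})^2$ of degree $2d_1$ cannot be canceled by terms of strictly smaller degree whenever $d_1:=\deg\varphi_1\geq 2$; the symmetric choice $\tilde G=y^2+(\text{affine linear})$ gives $\deg\varphi_2\leq 1$. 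For $V^{(2)}=\langle x^2,xy\rangle$ the same argument bounds $\varphi_1$, and then $\varphi^*(\tilde G)=\varphi_1\varphi_2+(\text{lower})$ of degree $\leq 2$ gives $\deg\varphi_2\leq 1$. For $V^{(2)}=\langle xy\rangle$, using $\tilde F=xy+(\text{affine linear})\in V$, the relation $\varphi^*(\tilde F)=\varphi_1\varphi_2+(\text{lower})$ of degree $\leq 2$ forces $\deg\varphi_1+\deg\varphi_2\leq 2$, and since each $\deg\varphi_i\geq 1$ (as $\varphi$ is an automorphism of $\A^2$) both must equal $1$. This last configuration is the delicate one, since the absence of a pure-square member in $V^{(2)}$ precludes a direct separate bound on $\varphi_1$ and $\varphi_2$; the joint bound rescues the argument thanks precisely to each component being non-constant.
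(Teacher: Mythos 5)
Your argument is correct in substance and follows a genuinely different route from the paper, whose entire proof of this lemma is the remark that the proof of \cite[Lem. 3.7]{BlSt} adapts to the pencil setting; that proof runs through the machinery of \S\ref{subsec_pencil} (reduced decompositions into elementary links and the behaviour of the height $\hei$ under them, via \cite[Pro. 3.4]{BlSt}). You instead stay entirely inside $\autpol$ and reduce the statement to a degree count on the pencil of affine conics $V\subset\K[x,y]_{\leq 2}$, classifying the space $V^{(2)}$ of leading forms by its position relative to the Veronese conic. What the link/height machinery buys is uniformity --- it is the same engine that drives Proposition \ref{pro_blst3} and Corollary \ref{cor_blst4}, where no low-degree normal form is available. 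What your computation buys is self-containedness in this one low-degree case, and it makes visible exactly where transversality enters: it excludes the leading form $\langle x^2\rangle$, i.e.\ a general member tangent to $L_\infty$, which is precisely the configuration preserved by non-linear de Jonqui\`eres maps.

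One step, however, is not justified as written. In the cases $V^{(2)}=\langle x^2,y^2\rangle$ and $V^{(2)}=\langle x^2,xy\rangle$ you deduce $\deg\varphi_1\leq 1$ from $\varphi^*(\tilde F)=\varphi_1^2+(\text{affine linear in }\varphi_1,\varphi_2)$ on the grounds that the degree-$2d_1$ part of $\varphi_1^2$ ``cannot be canceled by terms of strictly smaller degree''. But the term $\beta\varphi_2$ contributed by the affine-linear tail of $\tilde F$ need not have strictly smaller degree: nothing at that stage rules out $\deg\varphi_2=2\deg\varphi_1$ with the leading form of $\varphi_2$ proportional to the square of that of $\varphi_1$ --- and this is exactly the leading-form relation that actual plane automorphisms (triangular ones, for instance) exhibit, so it cannot be waved away. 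The repair is immediate: run the squared-generator argument first on whichever component has the \emph{larger} degree (if $d_2\geq d_1$, then $\varphi_2^2$ has degree $2d_2>\max(d_1,d_2)$, so no cancellation can occur and $d_2\leq 1$ follows, hence also $d_1\leq 1$); and in the $\langle x^2,xy\rangle$ and $\langle xy\rangle$ cases one may simply use the mixed generator alone, since $\deg(\varphi_1\varphi_2)=d_1+d_2>\max(d_1,d_2)$ already forces $d_1+d_2\leq 2$. With that reordering the proof is complete.
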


\begin{proof}
The proof of \cite[Lem. 3.7]{BlSt} may be readily adapted to our situation.
\end{proof}

A pencil $\Lambda$ on $\P^2$ is said to be composed by lines if there is a point $p\in\P^2$ and a positive integer $\ell\geq 2$ such that the general member of $\Lambda$ is the union of $\ell$ lines passing through $p$; in this case $\bas(\Lambda)=\{p\}$.

\begin{cor}\label{cor_blst4}
Let $(X,B)$ be a natural completion of $\K^2$ and let $\Lambda$ be a pencil of curves on $X$ (without fixed part) which intersects transversely $L_X$. Then there exists a birational map $\varphi:(X,B)\tor (X',B')$ such that one of the following holds

  a$)$ $\bir((X',B'),\varphi_*\Lambda)\subset \aut(X',B')$;

  b$)$ $X'=\P^2$ and $\varphi_*\Lambda$ is a pencil composed by lines through a point in $L_{\infty}$.
\end{cor}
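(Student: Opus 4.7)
The plan is to follow the overall strategy of the proof of Proposition \ref{pro_blst3}: assuming case (a) fails, pick $g\in G:=\bir((X,B),\Lambda)$ with $g\notin\aut(X,B)$, consider a reduced decomposition $g=g_m\cdots g_1$ into elementary links, and exploit the resulting constraints via Lemma \ref{lem_blst2} to reach either case (a) or case (b) on some transformed pair $(X',B')$.

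The first step is to bound $d:=(\Lambda\cdot L_X)$. If $g_1$ is of type I or II with base point $q$, then alternative (b) of Lemma \ref{lem_blst2} requires every member of $\Lambda$ to be singular at $q$; but transversality $(\Lambda\cdot L_X)_p\leq 1$ for all $p\in L_X$, together with Bertini, forces general members of $\Lambda$ to be smooth at every point of $L_X$, ruling out that alternative. Hence $(\Lambda\cdot(L_X\setminus\{q\}))\leq 1$, which combined with $(\Lambda\cdot L_X)_q\leq 1$ yields $d\leq 2$; moreover $d=2$ forces $q\in\bas(\Lambda)\cap L_X$, since a non-base point contributes $0$ to $(\Lambda\cdot L_X)_q$ for a general member. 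Consequently, when $d\geq 3$ no non-automorphism $g$ admits a decomposition beginning with a link of type I or II, so either case (a) already holds or $X=\F_1$ and $g_1$ is a link of type III; pushing through $g_1$ to $\P^2$ preserves $d\geq 3$ (a direct class computation on $\F_1$ shows the pushed degree on $\P^2$ is at least the original $d$), and the same analysis on $\P^2$ rules out any further nontrivial decomposition, giving case (a).

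For $d=2$, pick a base point $q\in\bas(\Lambda)\cap L_X$ and perform the elementary link centered at $q$. A local computation tracking the two transverse intersections of a general member with $L_X$ through the blow-up at $q$ and the subsequent contraction of the strict transform of $L_X$ shows that the pushed pencil $\Lambda_1$ on $(X_1,B_1)$ remains transverse to $L_{X_1}$ with $(\Lambda_1\cdot L_{X_1})=2$. Iterating, and choosing at each step a base point away from the $(-n)$-curve $E_n$ when possible so as to lower the Hirzebruch number, we reach $(X',B')=(\P^2,L_\infty)$ with $\varphi_*\Lambda$ transverse to $L_\infty$ of intersection $2$; Lemma \ref{lem_blst3} then gives case (a). For $d=1$ an analogous iteration reduces to $(\P^2,L_\infty)$ with $\varphi_*\Lambda$ a pencil of degree $1$ curves, i.e.\ lines; if $\bas(\varphi_*\Lambda)\cap L_\infty\neq\emptyset$ we obtain case (b), whereas if $\bas(\varphi_*\Lambda)\cap L_\infty=\emptyset$ the resulting pencil of affine lines through a fixed point is easily seen to yield case (a) directly, because every polynomial automorphism of $\K^2$ preserving it is affine.

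The main technical obstacle is the local verification that an elementary link centered at a transverse base point $q\in\bas(\Lambda)\cap L_X$ preserves both $d$ and transversality, together with ensuring that the iteration can always be brought down to $\P^2$. When the only base point of $\Lambda$ on $L_X$ lies at $L_n\cap E_n$, so that any transversality-preserving link from $\F_n$ must raise the Hirzebruch number, one must either argue directly that case (a) already holds in that configuration or combine ascending and descending links in a more delicate sequence; this bookkeeping is the principal difficulty we expect to face.
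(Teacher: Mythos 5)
Your opening move --- using Lemma \ref{lem_blst2} together with transversality to exclude alternative $b)$ of that lemma, thereby bounding $(\Lambda\cdot L_X)$ by $2$ whenever some non-automorphism starts with a link of type I or II --- is sound, and your treatment of the case $(\Lambda\cdot L_X)\geq 3$ is essentially correct (modulo noting that after a type III link the next base point must differ from the contracted point, so transversality survives where you need it). The genuine gap, which you yourself flag at the end, is in the case $(\Lambda\cdot L_X)\leq 2$ on a Hirzebruch surface. Your plan is to \emph{construct} a descending chain of elementary links centered at base points of $\Lambda$ on $L_X$ until you land on $(\P^2,L_\infty)$ and can invoke Lemma \ref{lem_blst3}. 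But the ``local computation'' you defer is exactly where the trouble lies: a descending link of type II centered at a transverse base point $q\in L_n\setminus E_n$ typically creates a new base point of $\Lambda_1$ at $L_{n-1}\cap E_{n-1}$ (the image of the contracted curve), so the only transversality-compatible link available at the next step \emph{raises} the Hirzebruch index; one checks that the configuration then returns to the one you started from, and the iteration cycles $\F_n\to\F_{n-1}\to\F_n\to\cdots$ without ever reaching $\P^2$. You offer no termination argument and no substitute argument that case $a)$ holds in that configuration, so the proof is incomplete precisely in the hardest case. (You also omit the case $(\Lambda\cdot L_X)=0$ on $\F_n$, where the general member is a union of fibers and one conjugates to a pencil composed by lines.)

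The paper does not normalize the pencil at all in this situation; it argues by contradiction from the reduced decomposition $\varphi_m\cdots\varphi_1$ of a putative non-automorphism. If some $\varphi_i$ is of type III, either the pushed pencil on $\P^2$ is still transverse (reducing to the $\P^2$ case, handled by Lemma \ref{lem_blst3}) or one gets a contradiction from the height inequality of \cite[Pro.~3.4]{BlSt} combined with reducedness. If all $\varphi_i$ are of type II --- your problematic case --- the paper first conjugates by at most one link so that $\bas(\Lambda)\cap L_n\cap E_n=\emptyset$ and then imports verbatim the concluding argument of \cite[Pro.~3.8]{BlSt} to derive a contradiction. That imported argument is exactly the missing piece of your bookkeeping; without it, or an equivalent replacement, your proposal does not close.
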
  

\begin{proof}
  Assume there exists $g\in\bir((X,B),\Lambda)\setminus \aut(X,B)$, and write $g=\varphi_{m}\cdots\varphi_{1}$ as a reduced decomposition into elementary links. The proof is then consequence of considering the following cases:

 (1) Suppose $X=\P^2$. By Lemma \ref{lem_blst2} a general member of $\Lambda$ intersects $L_X$ in at most two points, hence $\Lambda$ is a pencil of lines through a point in $L_X$ or it is a pencil of conics whose general member is not tangent to $L_X$. In the first case we are done and in the second one the assertion follows from Lemma \ref{lem_blst3}.

 (2) Suppose $X\not\simeq\P^2$ and there is $1<i<m$ such that $\varphi_i:\F_1\to\P^2$ is a link of type III contracting $E_1$ onto a point $q\in L_\infty$. If $\Lambda':=(\varphi_{i}\cdots\varphi_{1})_*\Lambda$ intersects transversely $L_\infty$ we choose $\varphi$ to be $\varphi_{i}\cdots\varphi_{1}$ and the assertion follows from (1). Otherwise, accordingly to Lemma \ref{lem_blst1} there is $p\in\bas(\Lambda')$ such that $(\Lambda'\cdot L_\infty)_p>1$; note that $\varphi_{i+1}$ is of type I. If $C'$ is a general member of $\Lambda'$, then \cite[Pro. 3.4]{BlSt} implies either $(\varphi_m\cdots\varphi_{i+1})_*(C')$ is singular at a point of $L_X$ or $p$ is the base point of $\varphi_{i+1}$. The first occurrence is not possible by the hypotheses on $\Lambda$ and the second one contradicts that $\varphi_m,\ldots,\varphi_{i+1}$ is reduced (\cite[Lem. 2.14]{BlSt}).

 (3) $X=\F_n$ for some $n\geq 1$ and all $\varphi_i$ is of type II. By Lemma \ref{lem_blst2} a general member $C$ of $\Lambda$ is disjoint from $L_X$ or intersects transversely it at one or two points. In the first case $C$ is a fiber of $\F_n\to\P^1$, hence one may conjugate with a birational map $\phi:(\F_n,L_n\cup E_n) \to (\P^2,L_\infty)$ in such a way that $\phi_*\Lambda$ is composed by lines, as required. From now on we assume $C\cap L_X\neq\emptyset$.

Finally, up to conjugate, if necessary, with a product of $\ell\leq 1$ elementary links of type II $\F_n\tor\F_{n+\ell}$, we may additionally assume $\bas(\Lambda)\cap L_n\cap E_n=\emptyset$; note that $1\leq \# (\bas(\Lambda)\cap L_X)\leq 2$. The final part of the proof in \cite[Pro. 3.8]{BlSt} works to show that under these assumptions we eventually get a contradiction, which completes our proof.
\end{proof}

\section{Derivations admitting eigenvectors}\label{sec_dae}
We recall that a derivation $D:\pol\to\pol$ extends to a derivation in the field of fractions $\fra$ of $\pol$, which we will still denote by $D$. 

Following the terminology used by van den Essen in \cite{vdE} we say that a non-constant element $f\in\pol$ is an \emph{eigenvector} of $D$ if there is $\lambda\in\pol$ such that $D(f)=\lambda f$; in that case $\lambda$ is called the \emph{eigenvalue} of $f$. Note that if $f,g\in \pol$ are eigenvectors of $D$, $c\in\K^*$ and $n\geq 2$, then $fg$, $cf$, $f^n$ and every irreducible component of $f$ are also eigenvectors.

We denote by $\eiga(D)\subset\pol$ the subset made up of the eigenvectors of $D$ and by $\eig(D)$ the subset in the projective space $\P(\pol)$ which consists of the classes $[f]$ of elements $f\in\eiga(D)$ which are \emph{square free}, i.e. $[f]\in\eig(D)$ implies there are no $g\in\eiga(D)$ and $n\geq 2$ such that $g^n$ divides $f$; we want to allow $\eig(D)$ to be finite. Note that the number $\deig(D)$ in the statement of Theorem \ref{thm_A} is precisely the number of elements of $\eig(D)$. One readily shows that if $\eig(D)$ is finite, then $\deig(D)=2^{\ell}-1$, where $\ell$ is the number of principal $D$-stable ideals of height 1 which are prime.

The \emph{kernel} of $D$ is the subring $\ker D=\{h\in\pol; D(h)=0\}$ whose non-constant elements are of course contained in $\eiga(D)$. Note that $\ker D\neq \K$ implies $\deig(D)=\infty$.

Elements in $\eiga(D)$ are often called \emph{Darboux polynomials} of $D$ (see for example \cite{Now1}).  

The following theorem is due to Jean G. Darboux and is well known for $\K=\R$ or $\K=\C$ (see \cite{Da} or \cite[Pro. 3.6.8]{Jo}). In a private communication to the second author of this paper Thiago Fassarella pointed him out that one of the known proofs of that result, using differential forms, may be adapted to the case of an abstract field of characteristic 0. Following the idea of Fassarella we wrote a proof of that nice and useful result in our context of derivations, and which is presented in the appendix, at the end of this paper.

\begin{thm}\label{darboux}$($Darboux, 1879$)$\label{thm_darboux} 
Let $D=a(x,y)\partial_x+b(x,y)\partial_y$ be a derivation and let $d$ be an integer such that $d\geq \deg  a(x,y), \deg  b(x,y)$. If $\eig(D)$ contains at least $2+d(d+1)/2$ irreducible elements, then there is $h \in \K(x,y) \setminus \K$ such that $D(h)=0$.
\end{thm}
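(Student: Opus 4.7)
The plan is to adapt the classical Darboux argument to the level of rational 1-forms over $\fra$. First I would introduce the dual 1-form $\omega=a\,dy-b\,dx$, for which a direct computation shows that the Darboux condition $D(f)=\lambda f$ is equivalent to $\frac{df}{f}\wedge\omega=\lambda\,dx\wedge dy$. Comparing degrees in $a\partial_x f+b\partial_y f=\lambda f$ forces $\deg\lambda\le d-1$, so every eigenvalue of $D$ lies in the $\K$-vector space $V$ of polynomials of degree at most $d-1$, whose dimension is exactly $d(d+1)/2$.

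Given $N\ge 2+d(d+1)/2$ irreducible Darboux polynomials $f_1,\ldots,f_N$ with eigenvalues $\lambda_1,\ldots,\lambda_N\in V$, the space of $\K$-linear relations among the $\lambda_i$ has dimension at least $N-\dim V\ge 2$. I would pick two linearly independent relations $(c_{j,i})_{i=1}^N$, $j=1,2$, and assemble the rational 1-forms $\eta_j:=\sum_{i=1}^N c_{j,i}\frac{df_i}{f_i}$. Each $\eta_j$ is closed (since the coefficients are constants) and satisfies $\eta_j\wedge\omega=\bigl(\sum_i c_{j,i}\lambda_i\bigr)dx\wedge dy=0$. Because $\Omega^2_{\fra/\K}$ is one-dimensional over $\fra$ and $\omega\ne 0$, this forces $\eta_j=g_j\omega$ for some $g_j\in\fra$.

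Next I would establish $\eta_j\ne 0$, hence $g_j\ne 0$, via a residue argument: localizing at the height-one prime $(f_i)\tl\pol$, the only summand of $\eta_j$ with a pole along $\{f_i=0\}$ is $c_{j,i}\,df_i/f_i$, so $\eta_j=0$ would force all $c_{j,i}=0$, contradicting the non-triviality of the relation. Set $h:=g_1/g_2\in\fra$. From $\eta_1=h\eta_2$ together with $d\eta_1=d\eta_2=0$ one deduces $dh\wedge\eta_2=0$; since $\eta_2\ne 0$, this means $dh$ is $\fra$-proportional to $\omega$. Writing $dh=\mu\omega$ and reading off components gives $\partial_x h=-\mu b$, $\partial_y h=\mu a$, so that $D(h)=a\partial_x h+b\partial_y h=0$. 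Finally $h\notin\K$: otherwise $\eta_1=h\eta_2$ with $h$ constant would give, upon comparing residues along each $\{f_i=0\}$, the equalities $c_{1,i}=h\,c_{2,i}$ for all $i$, contradicting the $\K$-linear independence of the two chosen relations.

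The step I expect to be the main obstacle is making the residue/non-vanishing argument rigorous in the abstract algebraic setting: concretely, the fact that distinct irreducible polynomials $f_1,\ldots,f_N\in\pol$ yield $\K$-linearly independent logarithmic differentials $df_i/f_i$ in $\Omega^1_{\fra/\K}$, which is transparent over $\C$ via classical residue theory but here requires a clean formulation through the discrete valuation rings $\pol_{(f_i)}$ associated to the height-one primes. Once this independence is secured, the rest of the proof is purely formal manipulation of closed rational 1-forms in two variables.
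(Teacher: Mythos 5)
Your argument is correct and is essentially the paper's own proof: the paper works in the ``dual'' picture, replacing your closed logarithmic $1$-forms $\eta_j=\sum_i c_{j,i}\,df_i/f_i$ by the divergence-free rational derivations $\sum_i (c_{j,i}/f_i)\,\delta_1(f_i)$ and your wedge with $\omega$ by the map $\wedge D$, but every step matches --- the degree bound $\deg\lambda_i\le d-1$, the dimension count producing two independent relations, the non-vanishing of the two logarithmic objects, and the conclusion that the ratio $h$ satisfies $D(h)=0$ with $h\notin\K$. The one step you flag as delicate (non-vanishing via residues along the primes $(f_i)$) is exactly what the paper isolates as its Lemma on the linear independence of the polynomials $q_iD(g_i)$.
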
    

An element $h\in \K(x,y) \setminus \K$ such that $D(h)=0$ is said to be a \emph{rational first integral} for $D$.

\begin{cor}\label{cor_darboux}
	The following assertions are equivalent for a derivation $D\in\derpol$:
	
	$a)$ $\eig(D)$ contains an infinite number of elements;
	
	$b)$ there exists $h\in\fra\setminus\K$ such that $D(h)=0$;
	
	$c)$  there exist $f,g\in\eiga(D)$ with the same eigenvalue and such that $[f]\neq[g]$.
	
In particular, if $\ker D$ contains $\K$ strictly, then the equivalent assertions above are verified.
\end{cor}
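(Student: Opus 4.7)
The plan is to prove the cyclic chain (a) $\Rightarrow$ (b), (b) $\Leftrightarrow$ (c), (b) $\Rightarrow$ (a), where only (a) $\Rightarrow$ (b) uses Theorem \ref{thm_darboux}; the remaining implications are elementary manipulations with the quotient rule and a coprimality argument. The ``in particular'' clause will then be immediate: any non-constant element of $\ker D\setminus\K$ lies in $\fra\setminus\K$ and is killed by $D$, so (b) is witnessed directly.

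First I would handle (c) $\Rightarrow$ (b): given eigenvectors $f,g\in\eiga(D)$ with the common eigenvalue $\lambda$ and $[f]\neq[g]$, the quotient rule yields $D(f/g)=(\lambda fg-\lambda fg)/g^{2}=0$, and $f/g\notin\K$ precisely because $f$ and $g$ are non-proportional. Conversely, for (b) $\Rightarrow$ (c), I would write $h=p/q$ with $\gcd(p,q)=1$; expanding $D(h)=0$ gives $q\,D(p)=p\,D(q)$, and coprimality forces $q\mid D(q)$. Setting $D(q)=\mu q$ and substituting shows $D(p)=\mu p$, so $p,q\in\eiga(D)$ share the eigenvalue $\mu$, and $[p]\neq[q]$ because $h\notin\K$.

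Next I would prove (b) $\Rightarrow$ (a). Writing $h=p/q$ in lowest terms as above, for every $\alpha\in\K$ the function $h-\alpha=(p-\alpha q)/q$ again lies in $\fra\setminus\K$ and is killed by $D$, so by the previous step $p-\alpha q$ is an eigenvector of $D$. A short check using $\gcd(p,q)=1$ shows that any common irreducible factor of $p-\alpha q$ and $p-\beta q$ must divide both $q$ and $p$, so these two polynomials are coprime whenever $\alpha\neq\beta$. Passing to square-free radicals yields pairwise non-proportional square-free eigenvectors, hence distinct classes in $\eig(D)$; since $\K$ is infinite (algebraically closed of characteristic zero), $\eig(D)$ is infinite.

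The main step is (a) $\Rightarrow$ (b). Writing $D=a\partial_{x}+b\partial_{y}$ and $d:=\max\{\deg a,\deg b\}$, each square-free representative of a class in $\eig(D)$ factors into finitely many irreducible polynomials which are themselves eigenvectors of $D$. If there were only finitely many, say $N$, irreducible eigenvectors up to scalar, then every square-free eigenvector would be (up to scalar) a product over a subset of these $N$ polynomials, producing at most $2^{N}-1$ classes in $\eig(D)$ and contradicting its infinitude. Consequently $D$ admits more than $2+d(d+1)/2$ irreducible eigenvectors, and Theorem \ref{thm_darboux} supplies the desired $h\in\fra\setminus\K$ with $D(h)=0$. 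The only delicate point is this bookkeeping passage from square-free to irreducible eigenvectors, and I expect no further obstacle since Theorem \ref{thm_darboux} is stated exactly in the form required.
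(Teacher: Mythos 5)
Your proof is correct and follows essentially the same route as the paper: the quotient-rule computation for the equivalence of (b) and (c), the linear pencil of eigenvectors for recovering (a), and Darboux's theorem for the implication into (b). The only difference is that you spell out the passage from infinitely many classes in $\eig(D)$ to infinitely many irreducible eigenvectors (needed to invoke Theorem \ref{thm_darboux} as stated), a bookkeeping step the paper leaves implicit but which matches its remark that a finite $\eig(D)$ has cardinality $2^{\ell}-1$.
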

\begin{proof}
	Note that if $h=f/g$ is a rational function with $f,g\in\pol$ coprime, then $D(h)=0$ is equivalent to $D(f)/f=D(g)/g$ from which it follows $b)\Leftrightarrow c)$. On the other hand, Darboux Theorem gives $a)\Rightarrow b)$. In order to prove the equivalence among assertions $a),b)$ and $c)$ it suffices to prove 
	$c)\Rightarrow a)$: in fact, $D(f)=\lambda f, D(g)=\lambda g$ implies $\alpha f+\beta g\in\eiga(D)$ for all $\alpha,\beta\in\K$.
	
	To finish the proof we note that if $h\in\pol\setminus\K$ belongs to $\ker D$, then the entire sub $\K$-algebra $\K[h]$ does, which implies $\K[h]\subset \eiga(D)$ and then $\#\,\eig(D)=\infty$.
\end{proof}

\begin{rem}\label{rem_integral}
  According to \cite[Thms. 10.1.1 and 10.1.2]{Now1} we know that $\ker D$ may be $\K$ even if $\eig(D)$ is infinite. 
\end{rem}

\begin{exa}\label{exa_ydy}
Consider the derivation $D=y\partial_y:\K[x_1,\ldots,x_n,y]\to\K[x_1,\ldots,x_n,y]$. Then $f\in\eiga(D)$ with eigenvalue $\lambda$ if and only if there exists a non-negative integer $\ell$ such that $f=ay^\ell$ for an $a\in\K[x_1,\ldots,x_n]$, in which case $\lambda=\ell$.
\end{exa}

Let $h\in\pol$. We recall $h$ is said to be equivalent to $h'\in\K[x,y]$ if there is $\varphi\in\autpol$ such that $\varphi(h')=h$, and if $h'=x$ we say $h$ is rectifiable; note that a polynomial which is rectifiable necessarily is (non constant and) irreducible.

\begin{exa}
Consider the derivation $D=c x\partial_x$, where $c\in\K^*$. Then $x$ is a rectifiable eigenvector of $D$. An automorphism $\varphi=(f,g)$ belongs to the isotropy of $D$ if and only if $c xf_x(x,y)=cf(x,y)$ and  $cxg_x=0$; hence $f=xf_1(y)$ and $g=g_1(y)$ with $f_1,g_1\in\K[y]$. Taking into account that the Jacobian of $\varphi$ is constant we deduce 
\[\aut(D)=\{(\alpha x, \beta y+\gamma); \alpha\beta\neq 0\}\simeq (\K^*\times\K^*)\rtimes\K.\]
\end{exa}

\begin{exa}\label{exa_finite}
Consider the derivation $D=x\partial_x+\partial_y$. We have $\eig(D)=\{[x]\}$. Indeed, if $f\in\pol$, then $D(f)=\lambda f$ implies $\lambda\in\K$. The assertion readily follows by writing $f=\sum_{i=0}^n a_iy^i$, with $a_i\in\K[x]$ and expressing that equality in terms of the $a_i$'s.
\end{exa}

\begin{thm}\label{thm_finite}
Let $D:\K[x,y]\to\K[x,y]$ be a derivation such that $0<\#\eig(D)<\infty$. If $\eig(D)$ contains an element which is not equivalent to an element in $\K[x]$, then $\aut(D)$ is an algebraic group and it holds exactly one of the following assertions

$a)$ $\aut(D)$ is finite.

$b)$  There are integers $p,q,\ell\geq 1$, with $p,q$ coprime, and $\varphi\in\autpol$ such that:
\begin{itemize} 
\item[$i$)] $\varphi D\varphi^{-1}=pxa_1\partial_x+qyb_1\partial_y$ where \[a_1=\sum_{i=0}^\ell a_{ii}x^{qi}y^{ip},\ b_1=\sum_{i=0}^\ell b_{ii}x^{qi}y^{ip},\]
with $qa_{\ell\ell}+pb_{\ell\ell}\neq 0$.

\item[$ii$)] The polynomial $x^qy^p-c$ is an eigenvector of $\varphi D\varphi^{-1}$ if and only if $c$ is either zero or a solution of the equation  $\sum_{i=0}^\ell (qa_{ii}+pb_{ii})z^i=0$.

\item[$iii$)] $\aut(\varphi D\varphi^{-1})$ contains $G_{p,-q}$ or $G_{p,-q}\rtimes \Z/2\Z$, where $\Z/2\Z$ is generated by the involution $(x,y)\mapsto (y,x)$, depending on $(p,q)\neq (1,1)$ or $(p,q)= (1,1)$.
\end{itemize}
In particular, if $\aut(D)$ is infinite, then $\eig(D)$ contains at least two rectifiable elements which are algebraically independent over $\K$.
\end{thm}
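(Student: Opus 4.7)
Let $f_1,\ldots,f_s$ be irreducible representatives of the finitely many classes in $\eig(D)$. The group $\aut(D)$ permutes these classes (any $\rho\in\aut(D)$ sends eigenvectors to eigenvectors), so the kernel $N$ of the associated permutation representation has finite index in $\aut(D)$ and stabilizes each $[f_i]$; that is, for $\rho\in N$ and each $i$ one has $\rho(f_i)=\alpha_i(\rho)f_i$ with $\alpha_i(\rho)\in\K^*$, whence $N\subset\aut(f_i)$. Since $\K[x]$ is factorially closed in $\pol$, an eigenvector not equivalent to an element of $\K[x]$ has an irreducible factor with the same property; pick such an $f=f_{i_0}$, so $N\subset\aut(f)$. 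Observe that $N$ is closed in $\aut(D)$ (each stabilizer $\{\rho:\rho(f_i)=\alpha f_i\}$ is cut out by polynomial equations in the coefficients of $\rho$). If $\aut(D)$ is finite we are in case $(a)$ of the theorem, so from now on we assume $\aut(D)$, hence $N$, is infinite.

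\textbf{Reduction to Lemma \ref{lem_h} and case analysis.} The plan is to apply Lemma \ref{lem_h} to $f$; granted that $\aut(f)$ is non-torsion, the lemma gives that $\aut(f)$ is algebraic, in particular contained in some $\aut(\pol)_d$. Then $N$, being closed inside an algebraic group, is itself algebraic, and Lemma \ref{lem_ind2} promotes $\aut(D)$ to an algebraic group. Irreducibility of $f$ excludes case $(c)$ of Lemma \ref{lem_h} (which needs $\ell>1$ distinct factors). Case $(a)$ of Lemma \ref{lem_h}, namely $f\sim x^q-cy^p$ with $p,q>1$ coprime and $\aut(f)\cong G_{p,q}$, is ruled out as follows: an infinite algebraic subgroup of $G_{p,q}\cong\K^*$ must equal $G_{p,q}$, so $D$ would be $G_{p,q}$-invariant; the weight equations $pi+qj=p$ and $pi+qj=q$ together with $\gcd(p,q)=1$ force such $D$ to have the rigid form $s(px\partial_x+qy\partial_y)$, for which every $x^q-c'y^p$, $c'\in\K^*$, is an irreducible eigenvector, contradicting $\#\eig(D)<\infty$. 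So we land in case $(b)$ of Lemma \ref{lem_h}: $f\sim x^qy^p-c$ with $p,q\geq 1$ coprime, $c\in\K^*$, and $\aut(f)\cong G_{p,-q}$ (or its extension by $\Z/2\Z$ when $p=q=1$).

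\textbf{Explicit form of $D$ and verification.} Conjugating we may take $f=x^qy^p-c$. The infinite algebraic subgroup $N\subset\aut(f)$ contains the connected one-parameter subgroup $G_{p,-q}$, hence $D$ commutes with $G_{p,-q}$. Solving the weight conditions $pi-qj=p$ and $pi-qj=-q$ (using coprimality of $p,q$) classifies all $G_{p,-q}$-invariant derivations and produces exactly the shape $D=pxa_1(x^qy^p)\partial_x+qyb_1(x^qy^p)\partial_y$ asserted in $(i)$. For $(ii)$, compute $D(x^qy^p-c')=D(x^qy^p)$ and demand divisibility by $x^qy^p-c'$; writing everything in the variable $u=x^qy^p$ and collecting coefficients reduces this to a polynomial equation in $c'$ of degree exactly $\ell$ (by the nondegeneracy $qa_{\ell\ell}+pb_{\ell\ell}\neq 0$), yielding the equation stated in $(ii)$; finitely many roots is consistent with $\#\eig(D)<\infty$. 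Assertion $(iii)$ is a direct verification that $G_{p,-q}$, together with the involution $(x,y)\mapsto(y,x)$ in the $(p,q)=(1,1)$ case, commutes with the displayed $D$. Finally, $x$ and $y$ themselves are irreducible eigenvectors of $\varphi D\varphi^{-1}$ with eigenvalues $pa_1$ and $qb_1$; they are rectifiable and algebraically independent over $\K$, giving the last assertion.

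\textbf{Main obstacle.} The most delicate step is the passage to Lemma \ref{lem_h}, i.e.\ showing that $\aut(f)$ is not a torsion group under the hypothesis that it contains the infinite closed subgroup $N$. One must either invoke an independent algebraicity result for $\aut(f)$ when $f$ is irreducible and not equivalent to $\K[x]$ (reducing to the classification in Lemma \ref{lem_h} via Lemma \ref{lem_agroup2} applied to $N$), or argue by a degree bound directly forcing $N\subset\aut(\pol)_d$ for some $d$. A secondary technical point is the invariant-theoretic computation eliminating case $(a)$ of Lemma \ref{lem_h}, where coprimality of $p$ and $q$ is essential to pin down the rigid form $s(px\partial_x+qy\partial_y)$ and thus derive the contradiction with finite $\eig(D)$.
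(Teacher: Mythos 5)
Your overall strategy coincides with the paper's: the permutation action on $\eig(D)$, the kernel $N$ sitting inside $\aut(f)$ for a distinguished eigenvector $f$, the case analysis via Lemma \ref{lem_h} with case $(a)$ eliminated by the rigidity computation $D=s(px\partial_x+qy\partial_y)$ and case $(b)$ yielding the displayed normal form. The computations in your third paragraph (weight conditions, the degree-$\ell$ equation in $c$, the eigenvectors $x$ and $y$) match the paper's. However, there are two genuine gaps.

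The main one is the circularity you yourself flag as the ``main obstacle'': you want to apply Lemma \ref{lem_h} to $f$, which requires $\aut(f)$ to be non-torsion; the only tool available for producing an element of infinite order is Lemma \ref{lem_agroup2}, which applies to \emph{algebraic} groups of positive dimension, not to arbitrary infinite closed subgroups of the ind-group $\autpol$ (an infinite closed ind-subgroup could a priori be torsion). But algebraicity is exactly what Lemma \ref{lem_h} is supposed to deliver. The paper breaks this circle by first invoking \cite[Thm.~1]{BlSt} directly: for a \emph{reduced} polynomial $h$ not equivalent to an element of $\K[x]$, the group $\aut(h)$ is algebraic, unconditionally. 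This makes $K$ algebraic, Lemma \ref{lem_ind2} then makes $\aut(D)$ algebraic, and only afterwards does Lemma \ref{lem_agroup2} produce the infinite-order element $\rho$ with $\rho^N\in K$ that feeds into Lemma \ref{lem_h}. Since ``$\aut(D)$ is an algebraic group'' is the first assertion of the theorem, leaving this step as an acknowledged obstacle is a real gap, not a technicality; you name the correct fix but do not carry it out.

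The secondary flaw is the sentence ``Since $\K[x]$ is factorially closed in $\pol$, an eigenvector not equivalent to an element of $\K[x]$ has an irreducible factor with the same property.'' This is false: $h=xy$ is not equivalent to any element of $\K[x]$ (its zero locus is singular), yet both irreducible factors are rectifiable. Factorial closedness gives only the converse implication. The paper avoids this for the algebraicity step by applying \cite[Thm.~1]{BlSt} to the reduced, possibly reducible, eigenvector itself rather than insisting on an irreducible one; the passage to an irreducible non-rectifiable eigenvector is only made later, in the classification half of the argument.
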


\begin{proof} There is a group homomorphism from $\aut(D)$ to the permutations group of $\eig(D)$. The kernel $K$ of this homomorphism is the normal subgroup of $\aut(D)$ whose elements are those which stabilize the curves of the form $(h=0)$ for any $h\in\eiga(D)$.

  \noindent{\it Claim}. The subgroup $K$ is an algebraic group. Indeed, if $h\in\eig(D)$ is not equivalent to an element in $\K[x]$, then the product of their irreducible components does, hence we may assume such a $h$ to be a reduced polynomial. The assertion then follows from \cite[Thm. 1]{BlSt}.

The claim above together with Lemma \ref{lem_ind2} shows $\aut(D)$ is an algebraic group. Hence the first assertion is proved.

To prove the rest of the theorem assume $\aut(D)$ is not finite. Lemma \ref{lem_agroup2} implies there is an element $\rho\in\aut(D)$ which has infinite order. Therefore we have a positive integer $N$ such that $\rho^N\in K$, i.e.  $\rho^N$  fixes all points in $\eig(D)$. 

Write $D=a\partial_x+b\partial_y$ and note $ab\neq 0$ since otherwise $\eig(D)$ would not be finite. Take an irreducible eigenvector $h\in\pol\setminus\K$ which is not equivalent to an element in $\K[x]$. Up to conjugate with an element $\varphi\in\autpol$ we may assume $D=\varphi D\varphi^{-1}$ and $h$ as in $a)$ or $b)$ of Lemma \ref{lem_h}, with $n=1$. We treat each case separately:

In case $a)$ there are coprime integers $p,q>1$ and $c,t\in\K^*$ such that $h=x^q-cy^p$ and  $\rho^N=(t^px,t^qy)$. Equations (\ref{eq_isotropy}) then imply 
\[a(t^px,t^qy)=t^pa(x,y),\ b(t^px,t^qy)=t^qb(x,y).\]
As $t^n\neq 1$ for all $n\geq 1$ we deduce there exist $\alpha,\beta\in\K^*$ such that $a=\alpha x, b=\beta y$: indeed, writing $a=\sum_{ij\geq 0}a_{ij}x^iy^j$ we get that $a_{ij}\neq 0$ implies $pi+qj=0$, hence $a_{ij}\neq 0$ if and only if $(i,j)=(1,0)$; analogously for $b=\sum_{k\ell\geq 0} b_{k\ell}x^ky^\ell$. Then there exists $\gamma\in\K^*$ such that $D=
\gamma(\alpha x\partial_x +\beta y\partial_y).$ Since $x^q-cy^p$ is an eigenvector of $D$ with nonzero eigenvalue we deduce $D=\bar{\gamma}(px\partial_x+qy\partial_y)$ for some $\bar{\gamma}\in\K^*$. But such a derivation admits eigenvectors of the form $x^q-\bar{c}y^p$ for every $\bar{c}\in\K^*$: contradiction.  

In case $b)$ there are coprime integers $p,q\geq 1$ and $c,t\in\K^*$ such that  $h=x^qy^p-c$ and $\rho^N=(t^px,t^{-q}y)$, or even $\rho^N=(y,x)$ if $p=q=1$. By reasoning as above we get  $D=pxa_1\partial_x+qyb_1\partial_y$, for $a_1,b_1\in\pol$. Hence by using once (\ref{eq_isotropy})  we deduce $a_1(x,y)=a_1(t^px,t^{-q}y), b_1(x,y)=b_1(t^px,t^{-q}y)$ and then 
$a_1$ and $b_1$ may be written in the form
\[a_1=\sum_{i}a_{ii}x^{qi}y^{pi},\ b_1=\sum_{i}b_{ii}x^{qi}y^{pi}.\]
It readily follows that $\aut(D)$ contains $G_{p,-q}$  or $G_{p,-q}\rtimes \Z/2\Z$ as stated, depending on $(p,q)\neq (1,1)$ or $(p,q)= (1,1)$. Moreover, the fact that $[x^qy^p-c]$ belongs to $\eig(D)$ is equivalent to saying $x^qy^p-c$ divides $(qa_1+pb_1)x^qy^p$; hence $x^qy^p$ is eigenvector of $D$. If $c\neq 0$, by parametrizing the curve $(x^qy^p-c=0)$ by $u\mapsto (\epsilon u^p,u^{-q})$, where $\epsilon$ is a primitive $q^{\text{th}}$ root of $c$, we deduce that condition signifies there is $\ell\geq 0$ such that 
\[
\sum_{i=0}^\ell (qa_{ii}+pb_{ii})c^i=0, \ qa_{\ell\ell}+pb_{\ell\ell}\neq 0.
\]
Note that if $\ell=0$, i.e. $qa_{ii}+pb_{ii}=0$ for all $i>0$, then the equality above does not depend on $c$ and then $[x^qy^p-e]\in\eig(D)$ for every $e\in\K^*$, which is not possible. Hence $\ell>0$, $c$ is algebraic over $\K$, and \emph{a posteriori}, $\eig(D)$ contains all elements of the form $[x^qy^p-e]$ where $z=e$ is a solution of the equation 
\[\sum_{i=0}^\ell (qa_{ii}+pb_{ii})z^i=0.\]

Finally, we notice that $x$ and $y$ are two rectifiable eigenvectors of $D=pxa_1\partial_x+qyb_1\partial_y$,
which completes the proof.    
\end{proof}

The following result complements Theorem \ref{thm_finite} and may be thought of as a partial converse of it: 

\begin{pro}\label{pro_finite}
Let $p,q,\ell\geq 1$ be integer numbers, with $p,q$ coprime, and 
consider the derivation $D=pxa_1\partial_x+qyb_1\partial_y$ of $\K[x,y]$, where \[a_1=\sum_{i=0}^\ell a_{ii}x^{qi}y^{ip},\ b_1=\sum_{i=0}^\ell b_{ii}x^{qi}y^{ip},\]
such that $qa_{\ell\ell}+pb_{\ell\ell}\neq 0$. If $\{a_{00},b_{00}\}$ is linearly independent over the field $\Q$ of rational numbers, then $\eig(D)$ is finite. 
\end{pro}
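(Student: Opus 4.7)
I argue by contradiction. Assume $\eig(D)$ is infinite; by Corollary~\ref{cor_darboux} there exists a nonconstant rational first integral $h=P/Q\in\fra\setminus\K$ with $P,Q\in\pol$ coprime, and $P,Q$ are then Darboux polynomials of $D$ with a common cofactor in $\pol$. I extend $D$ to $\K[[x,y]]$ and work at the singular point $(0,0)$, whose linearization is $D_0:=pa_{00}x\partial_x+qb_{00}y\partial_y$. The hypothesis that $\{a_{00},b_{00}\}$ is $\Q$-linearly independent, combined with $p,q\geq 1$, yields the non-resonance condition $ipa_{00}+jqb_{00}\neq 0$ for every $(i,j)\in\Z_{\geq 0}^2\setminus\{(0,0)\}$.

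The main step is a structural claim: every nonzero formal Darboux series $F\in\K[[x,y]]$ of $D$ has the form $F=x^ay^bU$ with $a,b\in\Z_{\geq 0}$ and $U\in\K[[x,y]]^*$ a unit. To see this, factor $F=x^ay^bF_1$ in the UFD $\K[[x,y]]$, with $F_1$ coprime to $xy$; a direct computation shows $F_1$ is again formally Darboux with some cofactor $\mu_1\in\K[[x,y]]$. If $F_1(0,0)=0$, then $F_1(x,0)=d_mx^m+O(x^{m+1})$ and $F_1(0,y)=c_ky^k+O(y^{k+1})$ for some $m,k\geq 1$ and $d_m,c_k\in\K^*$. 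Using $a_1(x,0)=a_{00}$ and $b_1(0,y)=b_{00}$, restricting the identity $D(F_1)=\mu_1F_1$ to $y=0$ and comparing the coefficient of $x^m$ gives $\mu_1(0,0)=pma_{00}$; symmetrically, restriction to $x=0$ yields $\mu_1(0,0)=qkb_{00}$. Hence $pma_{00}=qkb_{00}$ with $pm,qk\geq 1$, contradicting $\Q$-linear independence, so $F_1$ must be a unit.

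Since the monomial factorizations of a polynomial in $\pol$ and in $\K[[x,y]]$ coincide, I then write $P=x^{a_P}y^{b_P}\tilde P$ and $Q=x^{a_Q}y^{b_Q}\tilde Q$ in $\pol$ with $\tilde P,\tilde Q$ coprime to $xy$ and, by the structural claim, nonvanishing at the origin. Set $a:=a_P-a_Q$, $b:=b_P-b_Q$ and $V:=\tilde P/\tilde Q\in\K[[x,y]]^*$; then $h=x^ay^bV$ in $\mathrm{Frac}(\K[[x,y]])$. Using $D(x^ay^b)=(apa_1+bqb_1)x^ay^b$ (valid for Laurent monomials), the equation $D(h)=0$ becomes $D(V)=-(apa_1+bqb_1)V$. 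Evaluating constant terms — the left-hand side vanishes at the origin because $D$ does, while $V(0,0)\neq 0$ — yields $apa_{00}+bqb_{00}=0$, forcing $a=b=0$ by $\Q$-linear independence. Hence $V$ is itself a unit formal first integral of $D$. Writing $\tilde D:=D-D_0$, whose coefficients have order $\geq 1+p+q$, $\tilde D$ cannot contribute at the lowest degree; if $V$ were nonconstant, with $V_N$ the lowest nonzero homogeneous component of $V-V(0,0)$ of degree $N\geq 1$, then $D(V)=0$ would give $D_0(V_N)=0$, and the non-resonance condition would force $V_N=0$, a contradiction. So $V$ is constant, $h$ is constant in $\mathrm{Frac}(\K[[x,y]])$, and via $\fra\hookrightarrow\mathrm{Frac}(\K[[x,y]])$ we get $h\in\K$, the desired contradiction.

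The chief obstacle is the structural claim on formal Darboux series — the fact that, after maximally factoring out powers of $x$ and $y$, the residual factor is forced to be nonvanishing at the origin by non-resonance. Once this is in hand, the remainder of the argument reduces to two further applications of $\Q$-linear independence: one to pin down the monomial exponents $a,b$ of $h$ at the origin, and a second to conclude that the remaining unit factor is constant.
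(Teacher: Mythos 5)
Your proof is correct, but it takes a noticeably different technical route from the paper's. The paper also argues by contradiction from a rational first integral $f=g/h$, but it stays entirely inside $\pol$: it decomposes $g=\sum_{i\geq n}g_i$ and $h=\sum_{j\geq m}h_j$ into homogeneous components, observes that the lowest-degree part of $D$ is the diagonal linear derivation $D_1=pa_{00}x\partial_x+qb_{00}y\partial_y$, deduces from the minimal-degree terms of $D(g)h=gD(h)$ that $g_n/h_m$ is a nonconstant rational first integral of $D_1$, and then extracts a relation $\alpha(r-s)=\beta(n-m-(r-s))$ with $\alpha=pa_{00}$, $\beta=qb_{00}$ by comparing one well-chosen coefficient — contradicting $\Q$-independence in a single step. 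You instead pass to the formal completion $\K[[x,y]]$ and prove a stronger structural statement: under the non-resonance condition $ipa_{00}+jqb_{00}\neq 0$ for $(i,j)\in\Z_{\geq 0}^2\setminus\{(0,0)\}$, every formal Darboux series is a monomial $x^ay^b$ times a unit (essentially, the only local separatrices of a non-resonant diagonal singularity are the coordinate axes), and then you kill the residual unit first integral degree by degree. Both arguments exploit the same underlying phenomenon — the linear part of $D$ at the origin is diagonal with $\Q$-independent coefficients, which is exactly the classical non-integrability criterion the authors allude to in Remark~\ref{rem_integrals}$a)$ — but your version is self-contained at the level of that classical result and yields the local structure of all Darboux polynomials as a byproduct, at the cost of being longer; the paper's version is shorter and more elementary, needing only homogeneous decompositions and one coefficient comparison. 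All three of your invocations of $\Q$-linear independence (the restriction-to-axes step, the vanishing of the exponents $a,b$, and the degree-by-degree step) check out, so I see no gap.
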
  
\begin{proof}
  Assume by contradiction that $\eig(D)$ is infinite. Then $D$ admits a rational first integral $f\in\K(x,y)\setminus\K$, that is $D(f)=0$.

  Write $f=g/h$ where $g,h\in\K[x,y]$ have no non-constant common factors; we put $g=\sum_{i\geq n} g_i, h=\sum_{j\geq m} h_j$, where $g_i,h_j$ are homogeneous of degrees $i$, $j$, respectively, and $g_nh_m\neq 0$, for non-negative integers $n,m$. We may assume $n-m\neq 0$, i.e. $c:=g_n/h_m\not\in\K$, since otherwise we may replace $f$ with $f-c$.

  On the other hand, $D(f)=0$ signifies $D(g)h=gD(h)$ which by equaling homogeneous terms of minimal degree implies
  \begin{equation}\label{eq_finite}
  (pxa_{00}\partial_x(g_n)+qyb_{00}\partial_y(g_n))h_m=g_n(pxa_{00}\partial_x(h_m)+qyb_{00}\partial_y(h_m)).
\end{equation}
Now consider the derivation $D_1:=\alpha x\partial_x+\beta y\partial_y$, where  $\alpha=pa_{00}, \beta=qb_{00}$. Then (\ref{eq_finite}) signifies $D_1(g_n/h_m)=0$, and hence $g_n/h_m$ is a rational first integral of $D_1$. In order to complete the proof it suffices to show that  $\{\alpha, \beta\}$ is linearly dependent over $\Q$: contradiction.

In fact, we may assume $g_n,h_m$ have no non-constant common factors. Note that (\ref{eq_finite}) may be rewritten as 
  \begin{equation}\label{eq_finite.1}
   \alpha x(\partial_x(g_n)h_m-g_n\partial_x(h_m))   = \beta y (\partial_y(g_n)h_m-g_n\partial_y(h_m)).
 \end{equation}
 The assumption $n-m\neq 0$ excludes the possibility of having null terms between parentheses in (\ref{eq_finite.1}). Then $g_n$ and $h_m$ admit monomial terms of the form $\gamma x^ry^{n-r}$ and $\delta x^sy^{m-s}$, respectively, for $1\leq r< n$, $1\leq s <m$ and $\gamma\delta\neq 0$. Take $r$ and $s$ to be the biggest ones. By comparing the coefficients of $x^{r+s}y^{n-r+m-s}$ in both sides of (\ref{eq_finite.1}) we deduce
 \[\alpha(r-s)=\beta(n-m-(r-s)),\]
 from which the assertion follows.
\end{proof}

\begin{rems}\label{rem_integrals}

$a)$ When $\K=\C$ Proposition \ref{pro_finite} is a particular case of a general result which says that if a holomorphic vector field admits a rational first integral, then the eigenvalues of its linear part around a singularity are linearly dependent over the field of rational numbers (see \cite{Shi}). In \cite[Thm. 10.1.2]{Now1}, Nowicki treated the case of linear derivations over an arbitrary field of characteristic 0. The last result can be used to generalize the former one to the case of a $\K$-derivation of the ring $\K[[x_1,\ldots,x_n]]$ of formal power series: indeed a polynomial derivation is analytically conjugate to a linear one (\cite{Ma} for the case $\K=\C$).      

$b)$ By part $ii)$ of the statement $b)$ in Theorem \ref{thm_finite} a derivation as in Proposition \ref{pro_finite} always admits an irreducible eigenvector which is not equivalent to an element in $\K[x]$.
\end{rems}  

\begin{exa}
  Consider the polynomial $f=y^2+x^n+1$, where $n\geq 2$. Note that $f$ is irreducible and $f=0$ is a smooth curve whose genus is positive if $n$ is odd and $\geq 3$ or even and $n\geq 6$ (see \cite[Chap. III, \S 6.5]{Sh}). A derivation $D=a\partial_x+b\partial_y$ admits $f$ as an eigenvector if and only if there is $\lambda\in\K[x,y]$ such that $nax^{n-1}+2by=\lambda f$. We look for such a $D$ with $a=xa_1+Q(y), b=yb_1+P(x)$, where $P\in\K[x], Q\in\K[y]$, which requires $\lambda=nx^{n-1}Q+2yP=na_1=2b_1$. If in addition we assume $P(x)=\alpha x, Q(y)=\beta y$ for suitable $\alpha,\beta\in\K^*$, then we get a derivation as before whose linear part is $\beta y\partial_x+\alpha x\partial_y$.

  Now, as in the case of Proposition \ref{pro_finite} we deduce $D$ has not a rational first integral for general $\alpha, \beta$. Thus we have constructed a derivation where $\eig(D)$ is finite and contains $[f]$, and we know that if $n\gg 0$, then $f$ is not equivalent to an element in $\K[x]$.

  Moreover, if $n>6$, then one may prove $f$ has genus $\geq 2$, hence $\aut(f)$ is finite. By arguing as in the beginning of the proof of Theorem \ref{thm_finite} we infer $\aut(D)$ is finite.

  Finally, note that if $n$ is even $\aut(D)$ contains the involution $\rho=(-x,-y)$ which shows that $\aut(D)$ may be nontrivial in Theorem \ref{thm_finite}.
\end{exa}

\subsection{Derivations whose kernel contains rectifiable elements}\label{loc_nil}

Let $D$ be a derivation of $\pol$. Assume $\ker D$ contains a rectifiable element, or equivalently that there is $\varphi\in\autpol$ such that the kernel of $\varphi^{-1} D\varphi$ is different from $\K$ and contained in $\K[x]$: indeed, the last condition implies $\varphi^{-1} D\varphi=b\partial_y$ for some $b\in\pol$ whose kernel contains $x$. The simplest examples of that type of derivation belong to the family of locally nilpotent derivations. Recall that  $D$ is locally nilpotent if for every $f\in\pol$ there is $n\geq 0$ such that $D^n(f)=0$. By a result of Rentschler (\cite{Re}) we know that a locally nilpotent derivation in $\pol$ is conjugate to a derivation of the form $u(x)\partial_y$, where $u\in\K[x]$, and hence $\ker D$ contains rectifiable elements. 

\begin{pro}\label{pro_locnil}
	Let $D\in\derpol$ be a derivation and let $\varphi\in\autpol$ be an automophism  such that $\varphi^{-1}D\varphi=u(x)\partial_y$. Then	

$a)$ $h:=\varphi(x)$ is a rectifiable element such that $\ker D=\K[h]$.
	
$b)$ The conjugate $\varphi^{-1}\aut(D)\varphi$ of $\aut(D)$ is
	\[ \{(\alpha x+\beta, \gamma y+s(x)) | \alpha, \gamma \in \K^*, \beta \in \K, s(x) \in \K[x], u(\alpha x+\beta)/u(x)=\gamma \}\]
In particular, $\aut(D)$ is not an algebraic group.  
\end{pro}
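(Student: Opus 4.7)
For part (a), my plan is to exploit the identity $\ker D=\varphi(\ker(\varphi^{-1}D\varphi))$, which follows immediately from $D=\varphi\circ(u(x)\partial_y)\circ\varphi^{-1}$ interpreted as operators on $\pol$. Since the hypothesis implicitly forces $u\neq 0$ (otherwise $D=0$ and $\ker D=\pol$), the equation $u(x)\partial_y(f)=0$ gives $\partial_y f=0$, so $\ker(u(x)\partial_y)=\K[x]$. Therefore $\ker D=\varphi(\K[x])=\K[\varphi(x)]=\K[h]$, and $h=\varphi(x)$ is rectifiable by the very definition, since it is the image of $x$ under a polynomial automorphism.

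For part (b), the reduction step is that $\psi\mapsto\varphi^{-1}\psi\varphi$ is a group isomorphism $\aut(D)\xrightarrow{\sim}\aut(u(x)\partial_y)$, so it suffices to compute the latter. Writing an arbitrary $\rho=(f,g)\in\aut(u(x)\partial_y)$ and applying the isotropy equations (\ref{eq_isotropy}) with $a=0$, $b=u(x)$, I get the two conditions $u(x)f_y=0$ and $u(f)=u(x)g_y$. The first forces $f\in\K[x]$, and the second then gives $g_y=u(f(x))/u(x)\in\K[x]$, so $g=g_y(x)\cdot y+s(x)$ for some $s\in\K[x]$. At this stage, the crucial ingredient is the Jacobian condition for $\rho$ to be an automorphism: $f_x(x)g_y(x)-f_y(x)g_x=f_xg_y\in\K^*$. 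Since $f_x,g_y\in\K[x]$ and a product of two polynomials in one variable is a nonzero scalar iff each factor is a nonzero scalar, we conclude $f_x=\alpha\in\K^*$ and $g_y=\gamma\in\K^*$. Thus $f=\alpha x+\beta$, $g=\gamma y+s(x)$, and the compatibility $u(f)=u(x)g_y$ reduces to $u(\alpha x+\beta)/u(x)=\gamma$, giving precisely the claimed description.

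For the final assertion, I would observe that the choice $\alpha=\gamma=1$, $\beta=0$ satisfies the compatibility trivially, so for every $s\in\K[x]$ the automorphism $(x,y+s(x))$ lies in $\varphi^{-1}\aut(D)\varphi$. As $\deg s$ is arbitrary, this family has unbounded degree, so $\varphi^{-1}\aut(D)\varphi$ is not contained in any $\autpol_d$; by Corollary \ref{cor_ind3}, neither $\varphi^{-1}\aut(D)\varphi$ nor its conjugate $\aut(D)$ is an algebraic group. I do not expect a real obstacle here: the whole argument is a direct computation, and the only step where one must be a little careful is the passage from $f_x,g_y\in\K[x]$ to $f_x,g_y\in\K^*$ via the Jacobian, as noted above.
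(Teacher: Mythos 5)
Your proof is correct and follows essentially the same route as the paper's: the paper also reduces to computing $\aut(u\partial_y)$ via the isotropy equations, deduces $f,g_y\in\K[x]$, and invokes the Jacobian condition $f_xg_y\in\K^*$ to force $f=\alpha x+\beta$, $g=\gamma y+s(x)$ with $u(\alpha x+\beta)=\gamma u(x)$. The only difference is that you spell out part $(a)$ and the non-algebraicity claim (via the unbounded-degree family $(x,y+s(x))$ and Corollary \ref{cor_ind3}), which the paper leaves as ``straightforward''; these details are exactly the intended ones.
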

\begin{proof}
Assertion a) is straightforward and to prove assertion b) we only need to compute the isotropy of $u\partial_y$. In fact, $\rho=(f,g)\in\aut(u\partial_y)$ if and only if $uf_y=0, ug_y=u(f)$. Then $f,g_y\in\K[x]$, $u$ divides $u(f)$ and $g_y=u(f)/u$. Moreover, since $f_y=0$, the Jacobian condition says $f_xg_y\in\K^*$, from which we deduce $f=\alpha x+\beta$ and $g=\gamma y+P$, where $\alpha, \beta,\gamma\in\K$, with $\alpha\gamma\neq 0$, $P\in\K[x]$ and $u\gamma=u(f)$. The proof is then complete.
\end{proof}

\begin{cor}\label{cor_locnil} The isotropy group of a locally nilpotent derivation $D \in \derpol$ is conjugate to a group of automorphisms of the affine plane $\K^2$ which preserve a curve of equation $(u(x)=0)$, with $u\in \K[x].$ More precisely, if $\varphi^{-1}D\varphi=u(x)\partial_y$, then $\varphi^{-1}\aut(D)\varphi\subset\aut(u)$.
	
\end{cor}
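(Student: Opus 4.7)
The plan is to derive this as a near-immediate consequence of Rentschler's theorem combined with Proposition \ref{pro_locnil}(b), so the ``proof'' is essentially a translation step: rewriting the explicit characterization of the isotropy in Proposition \ref{pro_locnil}(b) as a subgroup condition for $\aut(u)$.

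First I would invoke Rentschler's theorem (cited in the paragraph preceding Proposition \ref{pro_locnil}) to produce $\varphi \in \autpol$ such that $\varphi^{-1}D\varphi = u(x)\partial_y$ for some $u \in \K[x]$. This reduces the statement to verifying the ``more precisely'' clause, since the first sentence is just the geometric reformulation of that clause.

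Next I would apply Proposition \ref{pro_locnil}(b) to obtain the explicit description
\[\varphi^{-1}\aut(D)\varphi=\{(\alpha x+\beta,\gamma y+s(x))\mid \alpha,\gamma\in\K^*,\ \beta\in\K,\ s\in\K[x],\ u(\alpha x+\beta)=\gamma u(x)\}.\]
The key observation is simply that for $\rho=(\alpha x+\beta,\gamma y+s(x))$, the identity $\rho(u)=u(\alpha x+\beta)$ holds because $u$ is a polynomial in $x$ alone; so the constraint $u(\alpha x+\beta)=\gamma u(x)$ appearing in Proposition \ref{pro_locnil}(b) is literally the condition $\rho(u)=\gamma u$ with $\gamma\in\K^*$, which is the defining relation for membership in $\aut(u)$. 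This yields $\varphi^{-1}\aut(D)\varphi\subset\aut(u)$, and geometrically every such automorphism preserves the curve $(u(x)=0)\subset\K^2$, as stated.

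There is no serious obstacle here: the content has been carried out in Proposition \ref{pro_locnil}, and the corollary is purely a matter of recognizing that the scalar equation $u(\alpha x+\beta)=\gamma u(x)$ is exactly the condition $\rho\in\aut(u)$. The only minor point worth remarking is that one should note $\gamma\in\K^*$ (guaranteed by Proposition \ref{pro_locnil}(b) via the Jacobian condition), which ensures the automorphism lies in $\aut(u)$ rather than merely in $\mathrm{Fix}(u)$ or some larger set.
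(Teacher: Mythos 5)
Your proposal is correct and matches the paper's treatment: the paper gives no written argument for this corollary (it is stated with an immediate \qed), precisely because it is the direct translation of the description in Proposition \ref{pro_locnil}(b) that you carry out. Your observation that $\rho(u)=u(\alpha x+\beta)$ for $\rho=(\alpha x+\beta,\gamma y+s(x))$, so that the constraint $u(\alpha x+\beta)=\gamma u(x)$ with $\gamma\in\K^*$ is exactly the membership condition for $\aut(u)$, is the whole content.
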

\qed

\begin{exa}\label{exa_locnil}
  Keeping notations as in Proposition \ref{pro_locnil}, we have a homomorphism $\aut(D)\to(\K^*\rtimes\K)\times\K$, $(\alpha x+\beta, \gamma y+s(x))\mapsto (\alpha\rtimes \beta, \gamma)$, whose image subgroup is defined by the equation $u(\alpha x+\beta)=\gamma u(x)$ and its kernel subgroup is $\{(x,y+s(x)); s\in\K[x]\}$; clearly $\aut(D)$ is an extension of the first subgroup by the last one.

  Note that the equation above we have referred to may be interpreted as saying the map $\K\to\K$, giving by $t\mapsto \alpha t+\beta$, permutes the roots of $u(x)=0$ and $\alpha^n=\gamma$. Hence there are not many possibilities for $\alpha, \beta$ and $\gamma$ unless $\beta=0$ and $u(x)=x^n$ is a power of $x$. In that case, for any $\gamma\in\K^*$ we obtain $\alpha$ is any of the $n$-roots of $\gamma$.
\end{exa}

\noindent{\bf Proof of Theorem \ref{thm_B}}. The ``only if'' part of the first assertion is straightforward. To prove the corresponding converse part suppose there is a rectifiable element $h\in\ker D$; let $\phi\in\autpol$ such that $\phi(h)=x$. Hence $x\in \ker(\phi D\phi^{-1})$, from which it follows $\phi D\phi^{-1}=b\partial_y$ for some $b\in\pol\setminus\{0\}$, as required.  
	
	Assertion a) follows readily form Proposition \ref{pro_locnil}. 
	
	To prove b) we may suppose $D=b\partial_y$. Write $b=b_0(x)+\cdots+b_n(x)y^n$ with $b_0,\ldots,b_n\in\K[x]$ and assume $n\geq 1$. Take $\rho\in\aut(D)$ and put $f=\rho(x), g=\rho(y)$. Since $x\in\ker D$ and $\partial(f,g)/\partial(x,y)\in\K^*$ the first equality in (\ref{eq_isotropy}) gives $f=\alpha+\beta x$, $g=\delta y+P(x)$ for some $\alpha,\beta,\delta\in \K$, with $\beta\delta\neq 0$, and $P\in K[x]$. The second equalty in (\ref{eq_isotropy}) gives 
	\[\sum_{i=0}^n \{b_i^{\alpha\beta}(\delta y+P(x))^i-\delta b_i(x)\}y^i=0,\]
	where we have denoted $b_i^{\alpha\beta}:=\rho(b_i)$ for $i=0,\ldots, n$. We deduce
        \[\sum_{i=0}^n b_{i}^{\alpha\beta}P(x)^i=\delta b_0\]
from which it follows that either $P=0$ or $\deg P$ is bounded. Hence Corollary \ref{cor_ind3} implies $\aut(D)$ is an algebraic group which proves $b)$ and completes the proof of the theorem.

\qed

\begin{exa}
	Consider the derivation $D=b\partial_y$ with all notations as in the proposition above and $b=b_ny^n$, $b_n\in\K^*$  and $n\geq 1$. Then $\delta^{n-1}=1$ and $P=0$. We deduce 
	\[\aut(D)\simeq (\K^*\rtimes\K)\times \mu_{n-1},\]
	where $\mu_{n-1}$ is the cyclic group of order $n-1$, and $\K^*\rtimes\K$ and $\mu_{n-1}$ act on $\autpol$ by $(\beta,\alpha)\to (\alpha+\beta x,y)$ and $\delta\mapsto (x,\delta y)$, respectively.   
\end{exa}

\begin{cor}
	$b\partial_y$ is locally nilpotent if and only if $\aut(D)$ is not an algebraic group.  
\end{cor}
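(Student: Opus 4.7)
The plan is to reduce the corollary directly to Theorem~\ref{thm_B}. That theorem gives the dichotomy: if $b\in\K[x]$ then $\aut(D)$ is not an algebraic group, and if $\deg_y b\geq 1$ then $\aut(D)$ is an algebraic group. Since every nonzero $b\in\pol$ satisfies exactly one of these two conditions, it suffices to prove that $D=b\partial_y$ is locally nilpotent if and only if $b\in\K[x]$.

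For the ``if'' direction, I would argue as follows. Suppose $b=b(x)\in\K[x]$. Writing an arbitrary $f\in\pol$ as $f=\sum_{i=0}^m a_i(x)y^i$, a one-line computation gives $D(f)=b(x)\sum_{i=1}^m i\,a_i(x)\,y^{i-1}$, so $\deg_y D(f)<\deg_y f$ whenever $\deg_y f\geq 1$. Iterating, $D^{m+1}(f)=0$, which shows $D$ is locally nilpotent.

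For the ``only if'' direction, suppose $n:=\deg_y b\geq 1$; I would show $D^k(y)\neq 0$ for every $k\geq 1$, which forces $D$ not to be locally nilpotent. If $n=1$, write $b=b_1(x)y+b_0(x)$ with $b_1\neq 0$, and a short induction gives $D^k(y)=b_1(x)^{k-1}\,b$, which is nonzero. If $n\geq 2$, observe that for any $f\in\pol$ with $\deg_y f\geq 1$ one has $\deg_y D(f)=\deg_y f+n-1\geq \deg_y f+1$, since the leading $y$-coefficients of $b$ and $\partial_y f$ are both nonzero and $\car\K=0$. Iterating from $f=y$, $\deg_y D^k(y)\to\infty$, so $D^k(y)\neq 0$.

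The argument is essentially immediate once Theorem~\ref{thm_B} is invoked; the only step requiring any care is the leading-term computation guaranteeing that $D^k(y)$ does not vanish when $\deg_y b\geq 1$, and that is entirely routine in characteristic zero.
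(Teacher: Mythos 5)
Your proof is correct and follows the route the paper intends: the corollary is stated as an immediate consequence of Theorem~B, whose dichotomy ($b\in\K[x]$ versus $\deg_y b\geq 1$) you correctly combine with the elementary verification that $b\partial_y$ is locally nilpotent exactly when $b\in\K[x]$. Your leading-$y$-coefficient computations for both directions are sound in characteristic zero, so nothing is missing.
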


We finish this section with the following:

\noindent{\bf Question} Does the ``non algebricity''  of $\aut(D)$ characterize locally nilpotence for derivations of $\pol$?

\subsection{Shamsuddin derivations}\label{sham}
A derivation $D$ of $\K[x,y]$ is said to be a \emph{Shamsuddin derivation} if $D$ is of the form $$D=\partial_x+(a(x)y+b(x))\partial_y,$$ where $a(x),b(x) \in \K[x]$. In \cite{Sha} there is a criterion which allows to decide whether D is simple or not, depending on whether the equation $D(h) = ah+b$ has no solution $h \in \K[x]$ or admits such a solution, respectively. It follows that a Shamsuddin
derivation with $a=0$ is not simple.  Furthermore, we know that  
if $D$ is a Shamsuddin derivation with $a\neq 0$ then $D$ is simple if and only if $\aut(D) = \{id\}$ (see \cite[Thm. 6]{Baltazar} and \cite[Thm. 1]{MePa}).

In the next proposition we describe the isotropy of Shamsuddin derivations and in particular, we complement the results given in the second reference above relatively to that type of derivations (for a polynomial $ P\in\K[x]$ we denote $P'(x):=\partial_x P(x)$). Let us first state a preliminary result whose proof is straightforward.

\begin{lem}\label{lem_shum}
 Let $\cale_\lambda(a,b)$ be the differential equation $D(h)=ah+\lambda b$, where $\lambda\in\K$ and $D$ is a Shamsuddin derivation as above. Then

  a) $\cale_\lambda(a,b)$ has at most one solution in $\K[x]$ which exists and is null if and only if $\lambda b=0$.

  b) If $P_1,P_2\in\K[x]$ are solutions of $\cale_{\lambda_1}(a,b),\cale_{\lambda_2}(a,b)$, respectively, then $-\lambda_2P_1+\lambda_1P_2=0$.

  c) If $b\neq 0$ and $\cale_\lambda(a,b)$ admits a solution in $\K[x]$ for some $\lambda\neq 0$, then  $\cale_\mu(a,b)$ admits a solution in $\K[x]$ for any $\mu\in\K$. 
\end{lem}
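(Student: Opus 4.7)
The plan is to reduce the entire lemma to the single observation that, for any $h \in \K[x]$, one has $D(h) = \partial_x h = h'(x)$; hence $\cale_\lambda(a,b)$ restricted to the subring $\K[x]$ becomes the first-order linear ODE $h' - ah = \lambda b$. I will work throughout under the tacit assumption $a \neq 0$, which is the only interesting case for Shamsuddin derivations (and is implicit in the statement, for otherwise all constants solve the homogeneous equation and uniqueness in (a) would fail).

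For part (a), uniqueness comes from a degree count: if $h_1, h_2 \in \K[x]$ are two solutions, then $q := h_1 - h_2$ satisfies the homogeneous equation $q' = aq$. If $q \neq 0$, then $\deg q' = \deg q - 1$ while $\deg(aq) = \deg a + \deg q \geq \deg q$, a contradiction; so $q = 0$. That the (at most unique) polynomial solution equals $0$ precisely when $\lambda b = 0$ is then immediate, since $h = 0$ itself solves $\cale_\lambda(a,b)$ iff $\lambda b = 0$.

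For part (b), I would set $Q := \lambda_1 P_2 - \lambda_2 P_1$ and take the obvious linear combination of the two defining equations to get $Q' = aQ$, with the inhomogeneous term cancelled. The same degree obstruction as in (a) forces $Q = 0$. For part (c), linearity does everything: if $P$ solves $\cale_\lambda(a,b)$ with $\lambda \neq 0$, then $(\mu/\lambda)\,P$ solves $\cale_\mu(a,b)$ for every $\mu \in \K$ (the case $\mu = 0$ giving back the zero solution, consistent with (a)).

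I expect no substantive obstacle; the one tool used in every part is the degree argument ruling out nonzero polynomial solutions of $q' = aq$ when $a \neq 0$, after which each statement follows from pure linearity of the equation.
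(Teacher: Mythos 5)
Your proof is correct and is exactly the ``straightforward'' argument the paper has in mind (the paper states the lemma without proof): restrict to $\K[x]$ where $D$ acts as $\partial_x$, kill the homogeneous equation $q'=aq$ by the degree comparison $\deg q' <\deg q\leq\deg(aq)$, and get (b) and (c) from linearity. Your explicit flagging of the tacit hypothesis $a\neq 0$ is consistent with how the lemma is actually used in the subsequent proposition, where the case $a=0$ is treated separately.
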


Note that if $b\neq 0$ and $\deg_x b<\deg_x a$, then $\cale_\lambda(a,b)$ admits no solution in $\K[x]$. 

\begin{pro}
  Let $D=\partial_x+(a(x)y+b(x))\partial_y$ be a  Shamsuddin derivation. We have the following assertions:

  a) If $a=0$, then $D$ is conjugate to $\partial_x$.

  b) If $a\neq 0$, then exactly one of the following holds:
  \begin{itemize}
  \item[i)] $(a,b)\in\K^*\times\K$ and $\aut(D)$ is isomorphic to  $\K\times\K^*$ or  $\K\times(\K\ltimes\K^*)$ depending on whether $b=0$ or $b\neq 0$, respectively.
    \item[ii)]  $\deg a \geq 1$ or $\deg b \geq 1$ and $\aut(D)$ is isomorphic to either $\K^*$, if $b=0$ or $b\neq 0$ and $\cale_\lambda(a,b)$ admits a solution in $\K[x]$ for some $\lambda\neq 0$, or trivial otherwise in which case $D$ is simple. 
\end{itemize}
In particular, $\aut(D)$ is algebraic if and only if $a\neq 0$. 
\end{pro}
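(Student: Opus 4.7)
For part (a), the plan is a direct conjugation. Since $\car\K=0$, pick $B\in\K[x]$ with $B'=b$ and set $\psi=(x,y-B(x))\in\autpol$; one checks at once that $\psi^{-1}D\psi(x)=1$ and $\psi^{-1}D\psi(y)=b-B'=0$, so $\psi^{-1}D\psi=\partial_x$. This derivation is locally nilpotent (conjugate via the involution $(y,x)$ to $1\cdot\partial_y$), so by Proposition~\ref{pro_locnil} its isotropy is not algebraic, and since algebraicity is preserved under conjugation in $\autpol$ the same holds for $\aut(D)$.

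For part (b), assuming $a\neq 0$, I would feed $\rho=(f,g)$ into the isotropy equations \ref{eq_isotropy} and analyse them by $y$-degree. Writing $f=\sum_{i=0}^N f_i(x)y^i$, the equation $D(f)=1$ gives $f_N'+Na\,f_N=0$ at the top coefficient; a degree comparison in $\K[x]$, using $a\neq 0$, rules out $N\geq 1$, so $f=x+\alpha$ for some $\alpha\in\K$. Writing next $g=\sum_{j=0}^M g_j(x)y^j$, the second equation yields $g_M'=(a(x+\alpha)-Ma(x))g_M$ at top; since $a(x+\alpha)$ and $a(x)$ share their leading term, the same kind of degree analysis forces $M\leq 1$, and the Jacobian condition excludes $M=0$, so $M=1$. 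The $y^1$-equation $g_1'=(a(x+\alpha)-a(x))g_1$ then shows that $g_1=\gamma\in\K^*$ is a constant and moreover $a(x+\alpha)=a(x)$: this is automatic when $\deg a=0$ and, when $\deg a\geq 1$, forces $\alpha=0$ (here the hypothesis $\car\K=0$ is used).

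It remains to examine the coefficient of $y^0$, which reduces in both situations to the linear ODE $g_0'-a\,g_0=(1-\gamma)b$. In case (b.i), with $a,b\in\K$ constant, this is solved by the unique constant $g_0=b(\gamma-1)/a$, yielding the parametrisation of $\aut(D)$ by $(\alpha,\gamma)\in\K\times\K^*$ and thereby the announced group. In case (b.ii), $\alpha=0$ and the equation is precisely $\cale_{1-\gamma}(a,b)$ from Lemma~\ref{lem_shum}: if $b=0$ or if $\cale_\lambda(a,b)$ admits a polynomial solution for some $\lambda\neq 0$, parts (a)--(c) of that lemma produce a unique $g_0\in\K[x]$ for each $\gamma\in\K^*$, and the map $\gamma\mapsto\rho_\gamma$ gives an isomorphism $\K^*\simeq\aut(D)$; otherwise only $\gamma=1$ is admissible and $\aut(D)$ is trivial, which by Shamsuddin's criterion recalled above coincides with simplicity of $D$. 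In every subcase of (b) the degrees of $f$ and $g$ are uniformly bounded, so $\aut(D)\subset\aut(\pol)_d$ for some $d$ and Corollary~\ref{cor_ind3} delivers the algebraic-group structure; together with part (a) this yields the final ``if and only if''. The trickiest bookkeeping should be the verification in case (b.ii) that $\gamma\mapsto\rho_\gamma$ is a group homomorphism, which I expect to follow from the uniqueness clause of Lemma~\ref{lem_shum}(a) combined with the linear relation in part (b).
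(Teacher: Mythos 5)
Your part (a) is the paper's argument verbatim (the paper conjugates by $(x,y+B(x))$, you by its inverse). For part (b) you take a genuinely different route: the paper simply quotes \cite[Prop.~9]{MePa} to get that every $\varphi\in\aut(D)$ has the form $(x,dy+P(x))$ with $P$ solving $\cale_{1-d}(a,b)$, and then feeds this into Lemma \ref{lem_shum}; you rederive the normal form of $\varphi$ from scratch by a $y$-degree analysis of the isotropy equations. That is more self-contained and in principle preferable.

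However, your derivation has a genuine gap exactly where it diverges from the quoted form of \cite[Prop.~9]{MePa}: the step ``in case (b.ii), $\alpha=0$''. Your degree analysis only yields $a(x+\alpha)=a(x)$, which forces $\alpha=0$ when $\deg a\geq 1$ but is vacuous when $a\in\K^*$; yet case (ii) of the statement also contains the subcase $a\in\K^*$, $\deg b\geq 1$. In that subcase $\alpha$ remains free and the constant-term equation is $g_0'-ag_0=b(x+\alpha)-\gamma\, b(x)$, which is \emph{not} $\cale_{1-\gamma}(a,b)$, so your identification with Lemma \ref{lem_shum} and the conclusion $\aut(D)\simeq\K^*$ are not established there. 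This is not a removable technicality: for $D=\partial_x+(y+x)\partial_y$ one checks directly that $(x+1,\,y-1)$ commutes with $D$, and more generally (since $g\mapsto g'-ag$ is bijective on $\K[x]$ for $a\in\K^*$) every pair $(\alpha,\gamma)\in\K\times\K^*$ gives an element of $\aut(D)$ — consistent with the fact that any $\partial_x+(ay+b(x))\partial_y$ with $a\in\K^*$ is conjugate to $\partial_x+ay\partial_y$. So either you must add the hypothesis $\deg a\geq 1$ to make your case (b.ii) argument close, or you have in fact uncovered a discrepancy with the statement (which the paper's proof conceals behind the citation). Relatedly, in case (b.i) with $b\neq 0$ your computation produces the group $\K\times\K^*$, and you should not assert without comment that this ``is the announced group'' $\K\times(\K\ltimes\K^*)$: the two do not visibly agree, and your $\K\times\K^*$ is what conjugation-invariance of the isotropy predicts.
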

\begin{proof}
  If $a=0$ we take $B\in\K[x]$ such that $B'(x)=b(x)$ and consider the automorphism $\varphi=(x,y+B(x))$. We have $\varphi D\varphi^{-1}=\partial_x$ which proves a).

  On the other hand, if  $a\neq 0$ then \cite[Prop. 9]{MePa} says either i) holds or at least one of the polynimials $a,b$ has degree 1 and for any $\varphi\in\aut(D)$ there exist $d\in\K^*$ and $P\in\K[x]$ such that $\varphi=(x,dy+P(x))$ and $P'-aP=(1-d)b$. Note that for such a $\varphi$ the polynomial $P$ belongs to $\cale_{1-d}(a,b)$.

  Since $\cale_\lambda(a,0)$ consists of the null solution we easily deduce $b=0$ implies $\aut(D)\simeq \K^*$.

  Now assume $b\neq 0$ and $\aut(D)\setminus \{id\}\neq \emptyset$. Hence Lemma \ref{lem_shum} implies
  \[\aut(D)=\{(x,dy+(1-d)P); d\neq 0\}\simeq\K^*\]
 
  Finally, $b\neq 0$ and $\aut(D)=\{id\}$ if and only if $\cale_\lambda(a,b)$ admits no solution for any $\lambda\neq 0$ which corresponds to saying $D$ is simple as we have said before.
\end{proof}

To finish this subsection we describe $\eig(D)$ for a Shumsuddin derivation $D$; note that the case $a=0$ is completely understood, then we disconsider that case. The following result is a straightforward consequence of \cite[Thm. 4.1b)]{BLL}: 

\begin{pro}
Let $D=\partial_x+(a(x)y+b(x))\partial_y$ be a Shumsuddin derivation with $a\neq 0$. Then $D$ is simple if and only if $\eig(D)\neq\emptyset$. Moreover, in this case $\eig(D)=\{[y-P(x)]\}$ where $P\in\K[x]$ is the unique solution of $\cale_1(a,b)$. 
\end{pro}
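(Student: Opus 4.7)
The plan is to completely classify $\eig(D)$ by an explicit computation on candidate linear-in-$y$ polynomials, then compare the resulting criterion with the Shamsuddin criterion recalled at the beginning of this subsection.

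First, I would compute directly: for any $P \in \K[x]$,
\begin{equation*}
D(y - P(x)) = -P'(x) + a(x)\,y + b(x) = a(x)\bigl(y - P(x)\bigr) + \bigl(a(x)P(x) + b(x) - P'(x)\bigr).
\end{equation*}
The second summand lies in $\K[x]$ while $y - P$ is monic of degree one in $y$, so $y - P$ is an eigenvector of $D$ if and only if the remainder vanishes, i.e., if and only if $D(P) = a(x)P(x) + b(x)$, which is exactly the statement that $P$ is a solution of $\cale_1(a,b)$; when this holds the eigenvalue is $\lambda = a(x)$. Lemma \ref{lem_shum}a gives the uniqueness of such a $P$ when it exists, so up to a $\K^*$-scalar there is at most one eigenvector of the form $y - P(x)$.

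Next, I would invoke the cited \cite[Thm. 4.1b)]{BLL}, which asserts that every irreducible element of $\eiga(D)$ (for a Shamsuddin derivation with $a \neq 0$) is, up to a nonzero scalar, of the form $y - P(x)$ for some $P \in \K[x]$. Combined with the previous step, this yields the ``moreover'' clause: if $\eig(D) \neq \emptyset$ then $\eig(D) = \{[y - P(x)]\}$, where $P$ is the unique polynomial solution of $\cale_1(a,b)$.

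Finally, the two steps above establish that the condition $\eig(D) \neq \emptyset$ is equivalent to the existence of a polynomial solution of $\cale_1(a,b)$. The first assertion of the proposition then follows by combining this equivalence with the Shamsuddin criterion from \cite{Sha} quoted at the start of this subsection, which relates the simplicity of $D$ to the (non-)existence of a polynomial solution of that same equation.

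The main obstacle is the step relying on \cite[Thm. 4.1b)]{BLL}, namely the classification of irreducible Darboux polynomials as being linear in $y$. If one wished to reprove it in situ, the argument would expand an irreducible eigenvector as $f = \sum_{i=0}^n c_i(x)\,y^i$ with $c_n \neq 0$, compare coefficients of $y^{n}$ in $D(f) = \lambda f$ to force $\lambda = n\,a(x) \in \K[x]$, then descend on $i$ to match the linear-in-$y$ factors against solutions of shifted equations $\cale_\mu(a,b)$; Lemma \ref{lem_shum} would force these solutions to coincide, so $f = c(y-P)^n$ and irreducibility imposes $n=1$.
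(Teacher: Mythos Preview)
Your approach is correct and coincides with the paper's, which gives no argument beyond declaring the result ``a straightforward consequence of \cite[Thm.~4.1b)]{BLL}''; you have simply unpacked that citation with the explicit computation for $y-P(x)$ and the appeal to Lemma~\ref{lem_shum}a). Note only that the proposition as printed contains an evident sign slip (``simple'' should read ``not simple,'' since an eigenvector generates a nontrivial $D$-stable principal ideal), and your final paragraph correctly establishes the intended equivalence $\eig(D)\neq\emptyset \Longleftrightarrow \cale_1(a,b)$ has a solution $\Longleftrightarrow D$ is not simple.
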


\section{Derivations with nontrivial kernel}\label{sec_nontrivial}

Before treating the subject of this section we characterize when the automorphism group of a non-constant polynomial map $h:\K^2\to\K$ is an algebraic group, which is interesting in its own. The motivation here is that the isotropy of a derivation whose kernel is $\K[h]$ may be naturally embedded into that automorphism group.

\subsection{Automorphism of a polynomial morphism}\label{subsec_autpol}

For a non-constant polynomial $h\in\pol$ we define $\autpolh$ to be the subgroup of $\autpol$ whose elements are the automorphisms $\varphi$ such that there are $\alpha,\beta\in\K$, with $\alpha\neq 0$, such that $\varphi(h)=\alpha h+\beta$. In the case where $h$ is a reduced polynomial that group contains the group of plane automorphisms which stabilize the curve $h=0$.

If we thought of $\varphi\in\autpol$ and $h$ as morphisms of algebraic varieties $\varphi:\K^2\to\K^2$ and $h:\K^2\to\K$, respectively, then $\varphi\in \autpolh$ if and only if there is an unique automorphism $\tau=\tau_\varphi:\K\to\K$ such that the following diagram commutes
\[\xymatrix{\K^2\ar@{->}[r]^\varphi\ar@{->}[d]_h&\K^2\ar@{->}[d]^h\\
  \K\ar@{->}[r]^\tau&\K}\]

In this subsection we prove that $\autpolh$ is an algebraic group if and only if $h$ is not equivalent to an element in $\K[x]$ which generalizes \cite[Thm.1]{BlSt} in the case of algebraically closed fields.

First of all note that $\autpolh$ admits a structure of ind-group: indeed, the projection $\autpol\times (\K^*\rtimes\K)\to \autpol$ respects the ind-structure associated to the ind-group $\autpol=\lim_{d}\aut(\pol)_d$ and for any $d\geq 1$ establishes an isomorphism between the closed subset $\{(\rho,(\alpha,\beta)); \rho(h)=\alpha h+\beta\}$ of $\aut(\pol)_d\times (\K^*\rtimes\K)$ and $\autpolh\cap\aut(\pol)_d$.

On the other hand, there is a natural homomorphism $\kappa:\autpolh\to\K^*\rtimes\K$, $\varphi\mapsto \tau_\varphi$, whose kernel is $\fix(h)$ and whose image will be denoted by $G_h$. Note also that we have $\aut(h)=\kappa^{-1}(\K^*\rtimes 0)$. We assert that $\kappa$ is a morphism of ind-groups: in fact, if $\varphi=(f,g)\in\aut(\pol)_d$, for some $d\geq 1$, and $h=\sum a_{ij}x^iy^j$, then $\kappa(\varphi)=(\alpha,\beta)$ if and only if $h(f,g)=\alpha h+\beta$. Note that if $a_{ij}\neq 0$, then $a_{ij} f^ig^j$ may be written in the form $\sum_{k\ell} A^{ij}_{k\ell}x^ky^\ell$, where $A_{k\ell}$ is a polynomial expression over  $\Q$ in the coefficients of $f$ and $g$ relative to the standard basis (in some order) of the vector space of polynomials of degree $\leq d$.

Since $h\not\in\K$ we know there exist $r,s$ with $r+s\geq 1$ such that $a_{rs}\neq 0$. We deduce\[a_{rs}\alpha=\sum_{ij} A^{ij}_{rs},\ \beta=-a_{00}\alpha+\sum_{ij} A^{ij}_{00}.\] 
	Hence $\alpha$ and $\beta$ are regular functions of $\aut(\pol)_d$ for all $d\geq 1$, which proves the assertion.

There is an exact sequence of groups 
\begin{equation}\label{eq_algroup}
  \xymatrix{1\ar@{->}[r]&\fix(h)\ar@{->}[r]&\autpolh\ar@{->}[r]^{\ \ \ \ \kappa}&G_h\ar@{->}[r]&1}
\end{equation}
If $\autpolh$ is algebraic, and since the closure $\overline{G_h}$ of $G_h$ in $\K^*\rtimes\K$ does, then $G_h=\overline{G_h}$ is algebraic too. In that case $G_h$ admits an irreducible decomposition $G^0\cup G^1\cup\cdots\cup G^\ell$, where $G^0$ is the unique component containing the identity $1\in G_h$. Then $G^0$ is a normal subgroup of $G_h$ and every component $G^i$ is a suitable coset of $G^0$; we set $\autpolh^i:=\kappa^{-1}(G^i)$ for $i=0,\ldots,\ell$. It follows $\autpolh^0$ is normal and closed in $\autpolh$. Since all $\autpolh^i$'s are cosets of $\autpolh^0$ we deduce $\autpolh^0$ is the irreducible component of $\autpolh$ containing $1$.

Finally, if $\P^2$ is the natural completion (see \S \ref{subsec_pencil}) of $\K^2$ defined by adding a projective line $L_\infty$ at infinity, $\P^2=\K^2\cup L_\infty$, the closure of the fibers of $h:\K^2\to\K$ define a pencil $\Lambda$ without fixed part and such that $\emptyset\neq \bas(\Lambda)\subset L_{\infty}$. We may identify $\autpolh$ with $\bir((\P^2,L_\infty),\Lambda)$.

We consider $\K^2\subset \P^2$ by means of the map $(a,b)\mapsto (a:b:1)$. If $x,y$ are coordinates on $\K^2$ and $u,v,w$ are homogeneous coordinates on $\P^2$, then $x=u/w, y=v/w$ and $L_\infty=(w=0)$. In that case $\Lambda$ is composed by lines through the point $p=(0:1:0)$ if and only if $h\in\K[x]$.   

\begin{thm}\label{thm_autpolh}
 Let $h\in\pol$ be a non-constant polynomial. Then $\autpolh$ is an algebraic group if and only if $h$ is not equivalent to an element in $\K[x]$. Moreover, we have one of the following three possibilities:

  $a)$ $\aut(h)$ is finite and one of the following assertions holds:
  \begin{itemize}
  \item[$a_1$)] $\autpolh=\aut(h)$.
    \item[$a_2$)] $\autpolh^0=\K$ and all fibers of the map $h:\K^2\to\K$ are smooth and isomorphic.
    \end{itemize}

 $b)$ $\aut(h)$ is infinite, coincides with $\autpolh$ and up to conjugate by an element in $\autpol$ there are coprime integers $p,q\geq 1$ and $c\in\K^*$ such that one of the following possibilities occurs
\begin{itemize}
\item[$b_1)$] $p,q>1$, $h=(x^q-cy^p)^n$ for some $n\geq 1$, $\autpolh=G_{p,q}$.
\item[$b_2)$] $h=(x^qy^p-c)^n$ for some $n\geq 2$ and $\autpolh=G_{p,-q}$ or $G_{p,-q}\rtimes \Z/2\Z$ depending of $(p,q)\neq (1,1)$ or $(p,q)=(1,1)$, respectively.
  \item[$b_3)$] $h=x^qy^p-c$ and $\autpolh/\aut(h)=\K^*\rtimes\K$.
\end{itemize}
$c)$  $\aut(h)$ is infinite, coincides with $\autpolh$ and $h$ is a product of at least two rectifiable elements.
  \end{thm}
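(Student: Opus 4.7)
The plan is to dispose of the ``only if'' direction by a direct dimension count, then reduce the ``if'' direction to the pencil machinery of \S\ref{subsec_pencil}, and finally use the exact sequence (\ref{eq_algroup}) together with Lemma~\ref{lem_h} to obtain the trichotomy.

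For the ``only if'' direction, if $h$ is equivalent to $P(x)\in\K[x]$, then after conjugation every automorphism $(x,y+Q(x))$ with $Q\in\K[x]$ belongs to $\fix(h)\subseteq\autpolh$, and this family is not contained in any $\aut(\pol)_d$; by Corollary~\ref{cor_ind3}, $\autpolh$ is not algebraic.

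For the ``if'' direction, I identify $\autpolh$ with $\bir((\P^2,L_\infty),\Lambda)$, where $\Lambda$ is the pencil on $\P^2$ formed by the projective closures of the fibers of $h$; note $\bas(\Lambda)\subset L_\infty$ and $\Lambda$ has no fixed part. I apply Proposition~\ref{pro_blst3} to produce a birational model $\varphi\colon(\P^2,L_\infty)\dashrightarrow(X',B')$ after which either the birational group preserving $\varphi_*\Lambda$ is already contained in $\aut(X',B')$, or $\varphi_*\Lambda$ meets $L_{X'}$ transversely. In the transverse case, Corollary~\ref{cor_blst4} further reduces to one of two possibilities: either the birational group again sits inside $\aut(X',B')$, or $X'=\P^2$ and $\varphi_*\Lambda$ is a pencil of lines through a point of $L_\infty$; but the last possibility would mean precisely that $h$ is equivalent to an element of $\K[x]$, contradicting the hypothesis. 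So up to conjugation $\autpolh$ lies inside $\aut(X',B')$, which is an algebraic group (being contained in the affine group or a de Jonqui\`eres-type subgroup), hence $\autpolh$ itself is algebraic.

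For the trichotomy I use the exact sequence (\ref{eq_algroup}) together with the inclusion $\fix(h)\subseteq\aut(h)$, which implies $\dim\autpolh^0=\dim G_h^0$. If $\aut(h)$ is finite, Lemma~\ref{lem_exe} gives that $\autpolh^0$ is either trivial (case $a_1$) or one-dimensional; in the latter case a one-parameter subgroup of the form $\K^*\rtimes 0$ in $G_h^0$ would furnish an element of infinite order in $\aut(h)$, contradicting finiteness, so necessarily $G_h^0=\{1\}\rtimes\K$. The corresponding $\K$-action $\{\rho_t\}$ satisfies $h\circ\rho_t=h+t$, so all fibers of $h$ are mutually isomorphic via $\rho_t$, and transversality of the orbits to the fibers forces $dh$ to be nonvanishing, giving case $a_2$. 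When $\aut(h)$ is infinite, Lemma~\ref{lem_h} puts $h$, up to equivalence, in one of three normal forms. For $h=(x^q-cy^p)^n$ with $p,q>1$ and for $h=(x^qy^p-c)^n$ with $n\geq 2$, the unique singular or multiple fiber lies over $0\in\K$, forcing $\beta=0$ in any $\kappa(\rho)=(\alpha,\beta)$, so $\autpolh=\aut(h)$ coincides with $G_{p,q}$ or $G_{p,-q}$ (extended by the swap involution when $(p,q)=(1,1)$), yielding $b_1$ and $b_2$. For $h=x^qy^p-c$ with $n=1$, a direct computation of the image of $\kappa$ produces the asserted quotient of $b_3$. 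In the product-of-rectifiables case of Lemma~\ref{lem_h}, the permutation action on the irreducible factors together with the argument of that lemma yields $\autpolh=\aut(h)$ and gives case $(c)$.

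The main obstacle I anticipate is the sharp computation of $\autpolh/\aut(h)$ in case $b_3$: one must construct explicit polynomial automorphisms realizing each class in the asserted quotient and verify that no further classes occur, via a careful analysis of how the elementary links in a reduced decomposition of a given element of $\bir((\P^2,L_\infty),\Lambda)$ act on the base points of $\Lambda$ lying on~$L_\infty$.
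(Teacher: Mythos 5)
Your route is the paper's route almost step for step: the identification of $\autpolh$ with $\bir((\P^2,L_\infty),\Lambda)$ and the appeal to Proposition~\ref{pro_blst3} and Corollary~\ref{cor_blst4} for algebraicity, then the exact sequence (\ref{eq_algroup}) with Lemma~\ref{lem_exe} for the finite case and Lemma~\ref{lem_h} with a singular-fibre analysis for the infinite case. Two points, however, need repair. First, Lemma~\ref{lem_h} requires $\aut(h)$ to be \emph{non-torsion}, not merely infinite; this is not automatic, and the paper gets it by first observing that $\aut(h_0)$ is an affine algebraic group (via \cite[Thm.~1 and Lem.~4.1]{BlSt}) of positive dimension and then applying Lemma~\ref{lem_agroup2}. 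In your setup the same conclusion is available -- you have already proved $\autpolh$ algebraic, so its closed subgroup $\aut(h)$ is algebraic, infinite hence positive-dimensional, and Lemma~\ref{lem_agroup2} applies -- but the step must be stated, since ``infinite $\Rightarrow$ contains an element of infinite order'' is exactly what fails for abstract torsion groups.

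Second, and more seriously, in case $(c)$ your justification for $\autpolh=\aut(h)$ does not do the job. The permutation action on the irreducible factors of $h$ and ``the argument of that lemma'' only reprove that $\aut(h)$ is algebraic; they say nothing about elements $\rho\in\autpolh$ with $\kappa(\rho)=(\alpha,\beta)$ and $\beta\neq 0$, which is precisely what must be excluded. The paper excludes them by noting that in case $(c)$ the fibre $h=0$ is singular while the general fibre of $h:\K^2\to\K$ is smooth, so for $\varphi\in\kappa^{-1}(\{1\}\rtimes\K)$ one has $\varphi^r(h)=h+r\beta$ and the set $\{\beta,2\beta,3\beta,\dots\}$ of translates of the singular value must be finite, forcing $\beta=0$ in characteristic zero; this pins $\autpolh^0$ inside $\aut(h)$ and yields the equality. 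This is the same singular-fibre mechanism you correctly deploy for $b_1)$ and $b_2)$, so the fix is short, but as written the case-$(c)$ step would fail. (In $b_3)$ both you and the paper are terse: the paper exhibits the explicit family $(\gamma,\delta)\mapsto(\gamma x,\delta y)$ and reads off its image under $\kappa$, whereas you defer the computation; if you carry it out, make sure to verify both that the asserted classes are realized and that no element of $\autpolh$ realizes a translation $(1,\beta)$ with $\beta\neq 0$, since $\varphi(x)^q\varphi(y)^p=x^qy^p+\beta$ has no solution in $\autpol$.)
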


  \begin{proof}
    We identify $\autpolh$ with $\bir((\P^2,L_\infty),\Lambda)$. By taking into account the commentaries above the first assertion is consequence of Corollary \ref{cor_blst4} and \cite[Lem. 2.6]{BlSt}. To prove the rest of the theorem we assume $h$ is not equivalent to an element in $\K[x]$, i.e. $\autpolh$ is an algebraic group. Recall $h_0$ denotes the product of all irreducible components of $h$.

    According to \cite[Thm. 1 and Lem. 4.1]{BlSt} $\aut(h_0)$ is an affine algebraic group. We deduce there exists a positive integer $m$ such that every $m^{th}$ power of an element in $\aut(h_0)$ stabilizes all irreducible components of $h_0$.

    First suppose $\aut(h_0)$ is finite. Consider the exact sequence of algebraic groups
    \[\xymatrix{1\ar@{->}[r]&\fix(h)\ar@{->}[r]&\autpolh^0\ar@{->}[r]^{\ \ \ \ \ \ \kappa_0}&G^0},\  \kappa_0:=\kappa|_{\autpolh^0}\]
    where $G^0$ is the irreducible component of $G_h=\overline{G_h}$ containing 1. By connectedness we get $\fix(h)=\{1\}$ and then $\autpolh^0=G^0$. Since $\aut(h)=\kappa^{-1}(\K^*\rtimes 0)$ is finite we get $G^0=1$ or $G^0=\K$ (see Lemma \ref{lem_exe}). In the first case $\autpolh=\aut(h)$.

    On the other hand, if $G^0=\K$, then $\autpolh$ acts transitively on the set of fibers of the map $h:\K^2\to\K$. Since a general fiber of $h$ is smooth the assertion a) follows. 
    
  Now we assume $\aut(h_0)$ is infinite. Hence it has positive dimension and Lemma \ref{lem_agroup2} implies that group is not a torsion group, so $\aut(h)$ neither. By Lemma \ref{lem_h} it suffices to consider the cases where $h$ is as in the assertions $b_1)$, $b_2)$ or $b_3)$, or it may be written as $h=x^{n_1}h_2^{n_2}\cdots h_\ell^{n_\ell}$, with $h_j$ rectifiable and not belonging to $\K[x]$ for any $j$.

  In the first case $\aut(h)=\aut(h_0)$ and we have two possibilities: $(i)$ $h=(x^q-cy^p)^n$  and $(ii)$ $h=(x^qy^p-c)^n$, with notations as above.

  Note that in $(i)$ the unique singular fiber of the map $h:\K^2\to\K$ is supported on $x^q-cy^p=0$, hence $\autpolh=\aut(x^q-cy^p)$ and the assertion $b_1)$ is proven. 

  In $(ii)$, if $n>1$ by arguing as above we deduce $\autpolh=\aut(x^qy^p-c)$ from which $b_2)$ follows. Otherwise $h=x^qy^p-c$. Then there is a homomorphism $\eta:\K^*\times\K\to \autpolh$ defined by $(\gamma,\delta)\mapsto (\gamma x,\delta y)$ such that the image of $\kappa\eta$ is $\K^*\rtimes\K$. Since in this case $\fix(h)=\aut(h)$ the assertion $b_3)$ is consequence of (\ref{eq_algroup}). 

Finally, suppose we are in the last case. Hence the curve $h=0$ admits a singularity of the form $(0,y_0)\in\K\times\K$.

Take $\varphi\in \kappa_0^{-1}(1\rtimes \K)$. Then there is $\beta\in\K$ such that $\varphi^r(h)=h+r\beta$ for $r=1,2,\ldots$ Since $h:\K^2\to\K$ has smooth general fibers we deduce the subset  $\{\beta, 2\beta, 3\beta,\ldots\}$ of $\K$ is finite, and then it contains $0$. Hence $\beta=0$. In other words, $\kappa_0^{-1}(1\rtimes \K)\subset \kappa_0^{-1}(\K^*\rtimes 0)$ from which it follows $\autpolh^0=\kappa_0^{-1}(\K^*\rtimes 0)$. Since this group is the connected component of $\aut(h)$ which contains 1 we deduce $\autpolh=\aut(h)$, and this completes the proof of the theorem.
\end{proof}

 \begin{rem}\label{rem_artal}
  Polynomials as in the part $a_2)$ of Theorem \ref{thm_autpolh} were constructed in \cite{ACNL}. We do not know how to calculate their corresponding automorphism group.
 \end{rem}
\subsection{Derivations with nontrivial kernel}\label{subsec_nontrivial}

Let $D$ be a derivation of $\K[x,y]$ such that $\ker D=\K[h]$ for a polynomial $h\in\pol\setminus\K$. Note that necessarily the fibers of the map $h:\K^2\to\K$ are connected because $\K[h]$ is integrally closed in $\pol$ (\cite[Prop. 2.2]{NaNo}). In particular, there is no $n>1$ and $f\in\pol$ such that $h=f^n$.

The aim here consists of two things. First of all we determine when $\aut(D)$ is an algebraic group and then, for such a derivation, we classify the isotropy group of $D$ in terms of the generator $h$ of $\ker D$ as $\K$-algebra. We already treated the case where $\ker D$ contains rectifiable elements (see  Theorem \ref{thm_B}), hence from now on we assume there is no $\varphi\in\autpol$ such that $\varphi(h)\in\K[x]$.

In order to fix the first objective it suffices to embed $\aut(D)$ in $\autpolh$ as a closed ind-subgroup, by Theorem \ref{thm_autpolh}. And this is clear because the commutation relationship $\rho D=D\rho$: indeed, it defines a closed subset in $\autpolh\cap\aut(\pol)_d$ for any $d\geq 1$ (see the proof of Corollary \ref{cor_ind3}), and it also implies $\rho$ induces a $\K$-automorphism of $\K[x]$ from which one deduces $\rho\in\autpolh$.  

Before stating the main result of \S \ref{sec_nontrivial} we introduce the last terminology and give a proposition which has its own interest. As we have said in \S \ref{sec_intro}, a derivation $D=a\partial_x+b\partial_y$ is said to be \emph{irreducible} if $a,b$ have no non-constant common factors. If $D$ is arbitrary we may write $D=gD_1$ where $g\in\pol$ and $D_1$ is irreducible. 

\begin{pro}\label{pro_main}
  Let $D=gD_1\in\derpol$ be a nonzero derivation with $D_1$ irreducible and $g\in\pol$ non-constant. Then we have

  $a)$ $\aut(D)$ is an ind-subgroup of $\aut(g)$ contained in $\autsc(D_1)$.

  $b)$  $\aut(D)$ is isomorphic as ind-group to a closed subgroup of $\aut(g)\times \autsc(D_1)$.
  \end{pro}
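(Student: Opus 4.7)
The plan begins with the observation that for $\rho\in\aut(D)$ and any $f\in\pol$, the commutation $\rho D=D\rho$ applied to $f$ reads $\rho(g)\cdot\rho(D_1(f))=g\cdot D_1(\rho(f))$. Rewriting $\rho(D_1(f))=(\rho D_1\rho^{-1})(\rho(f))$ and letting $\rho(f)$ range over $\pol$, this translates into the equality of derivations
\[ \rho(g)\cdot(\rho D_1\rho^{-1}) \;=\; g\cdot D_1. \]

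The key step is then to observe that $\rho D_1\rho^{-1}$ is itself irreducible. Indeed, writing $D_1=a\partial_x+b\partial_y$ with $\gcd(a,b)=1$, a chain-rule computation shows that the coefficients $((\rho D_1\rho^{-1})(x),(\rho D_1\rho^{-1})(y))$ are obtained from $(\rho(a),\rho(b))$ by multiplication by $\rho$ applied to the Jacobian matrix of $\rho^{-1}$, whose determinant lies in $\K^*$; since $\rho$ is an automorphism of $\pol$ it preserves gcds, so $\gcd(\rho(a),\rho(b))=1$ and the coefficients of $\rho D_1\rho^{-1}$ remain coprime. Writing $\rho D_1\rho^{-1}=A\partial_x+B\partial_y$, the displayed identity gives $\rho(g)A=ga$ and $\rho(g)B=gb$ in $\pol$; a short UFD argument using the coprimality of $\{a,b\}$ and of $\{A,B\}$ then yields $\rho(g)/g\in\K^*$. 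Hence there exists $\alpha\in\K^*$ with $\rho(g)=\alpha^{-1}g$ and $\rho D_1\rho^{-1}=\alpha D_1$, so $\rho\in\aut(g)\cap\autsc(D_1)$. The ind-subgroup assertion in (a) is then automatic: $\aut(D)$ is closed in $\autpol$ by Corollary \ref{cor_ind3}(a), hence closed in the intermediate ind-subvariety $\aut(g)$.

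For (b) I would consider the diagonal map $\Phi\colon\aut(D)\to\aut(g)\times\autsc(D_1)$, $\rho\mapsto(\rho,\rho)$, which is well defined by (a), visibly injective, and a morphism of ind-groups with left inverse the first projection. To show its image is closed, note that $\aut(D)\times\aut(D)$ is closed in $\autpol\times\autpol$ by Corollary \ref{cor_ind3}(a), while the diagonal $\Delta\subset\autpol\times\autpol$ is closed because each filtering piece $\aut(\pol)_d$ is a separated algebraic variety, so $\Delta\cap(\aut(\pol)_d\times\aut(\pol)_d)$ is closed. Hence $\Phi(\aut(D))=\Delta\cap(\aut(D)\times\aut(D))$ is closed in $\autpol\times\autpol$, and therefore in the ind-subvariety $\aut(g)\times\autsc(D_1)$. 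The main technical point I anticipate is the coprimality argument used to isolate the scalar factor $\rho(g)/g\in\K^*$; the remaining verifications are bookkeeping with the ind-structure and the commutation identity.
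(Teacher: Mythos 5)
Your proof is correct, and for the key step of part (a) it takes a genuinely different route from the paper. The paper evaluates the commutation relation on a general linear form $l=\alpha x+\beta y$: writing $f=\rho(l)$ it obtains $\rho(g)\bigl(\rho(a_1)\alpha+\rho(b_1)\beta\bigr)=g\bigl(a_1f_x+b_1f_y\bigr)$, argues that since $g$ has only finitely many divisors the factor $\rho(a_1)\alpha+\rho(b_1)\beta$ must divide $a_1f_x+b_1f_y$ for $(\alpha,\beta)$ in a dense open set, deduces $g\mid\rho(g)$, gets the reverse divisibility by symmetry, and then extracts a scalar $\lambda$ so that $\rho D_1-\lambda D_1\rho$ annihilates all general linear forms and hence vanishes; it also treats the case $a_1b_1=0$ separately. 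You instead package the whole commutation relation into the single identity $\rho(g)\cdot(\rho D_1\rho^{-1})=gD_1$, note that conjugation preserves irreducibility of the derivation (invertibility of the Jacobian of $\rho^{-1}$ plus the fact that a ring automorphism preserves gcd's), and read off both $\rho(g)=\alpha^{-1}g$ and $\rho D_1\rho^{-1}=\alpha D_1$ from a valuation-by-valuation comparison in the UFD $\pol$. This is more uniform (no genericity argument, no case split) and produces the exact scalar relation in one stroke. For (b), the paper identifies the image of the natural immersion inside $\aut(g)\times\autsc(D_1)$ via a character map $\Theta$ to $\K^*\times\K^*$ (and its phrase ``precisely the inverse image of the diagonal'' is, read literally, too strong: that inverse image contains pairs $(\sigma,\rho)$ with $\sigma\neq\rho$), whereas you establish closedness of the diagonal image directly from the separatedness of the pieces $\aut(\pol)_d$ together with the closedness of $\aut(D)\times\aut(D)$; both accomplish the goal, and your version is the more airtight of the two.
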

  \begin{proof}
    Let $\rho\in\aut(D)$ and write $D_1=a_1\partial_x+b_1\partial_y$. The result is trivial when $a_1b_1=0$, hence assume $a_1b_1\neq 0$.

    Let us take an arbitrary linear polynomial $l=\alpha x+\beta y$ and set $f:=\rho(l)$. Note that $D\neq 0$ implies $D(l)\neq 0$ for general $\alpha$ and $\beta$.

    We have
    \[\rho(g)(\rho(a_1)\alpha+\rho(b_1)\beta)=g(a_1f_x+b_1f_y).\]
    If $a_1,b_1\in\K$, then $g$ divides $\rho(g)$. Otherwise, since $g$ admits a finite number of divisors we deduce $\rho(a_1)\alpha+\rho(b_1)\beta$ divides $a_1f_x+b_1f_y$ for $(\alpha,\beta)$ in a Zariski dense open subset of $\K^2$, say $U\subset \K^2$. Hence $g$ divides $\rho(g)$ in any case.

    By interchanging the roles of $f$ and $l$ we also get $\rho(g)$ divides $g$ from which it follows $\aut(D)$ is contained in $\aut(g)$. Since $\aut(D)$ is closed in $\autpol$ we deduce it is closed in $\aut(g)$.

    On the other hand, there exists $\lambda\in\K^*$ such that $\rho(a_1)\alpha+\rho(b_1)\beta=\lambda(a_1f_x+b_1f_y)$ for any $(\alpha,\beta)\in U$, that is $\rho D_1-\lambda D_1\rho$ vanishes on a general polynomial of degree 1, hence it does on every such polynomial. We deduce $\rho D_1=\lambda^{-1} D_1\rho$, and then $\rho\in\aut(g)\cap \autsc(D_1)$ which completes the proof of the assertion $a$).
    
Finally, we consider the map $\Theta:\aut(g)\times \autsc(D_1)\to \K^*\times\K^*$ defined by setting $\Theta(\sigma,\rho)=(\gamma,\lambda)$ if and only if $\sigma(g)=\gamma g, \rho D_1=\lambda^{-1} D_1\rho$. We conclude that the image of the natural immersion $\aut(D)\to \aut(g)\times \autsc(D_1)$ is precisely the inverse image of the diagonal in $\K^*\times\K^*$ under $\Theta$, hence $b)$ follows.
\end{proof}

We consider the restriction $\kappa_D$ of the map $\kappa:\autpolh\to\K^*\rtimes\K$ to $\aut(D)$. If $\aut(D)$ is algebraic, then its image under $\kappa_D$ does and we denote by $\aut(D)^0$ the inverse image of the irreducible component of that image group containing the unity.

Now, Proposition \ref{pro_main} may be used together with the following:

\begin{thm}\label{thm_ker}
  Let $D$ be a derivation of $\pol$ such that $\ker D=\K[h]$ for some $h\in\pol\setminus\K$. If $h$ is not equivalent to an element in $\K[x]$, then $\aut(D)$ and $\autsc(D)$ are algebraic groups. Moreover, if $D=gD_1$ with $D_1$ irreducible, then we have the following assertions:

  $a)$ $\aut(h)$ is finite and one of the following holds
  \begin{itemize}
  \item[$a_1$)] $\aut(D_1)\subset\aut(h)$.
    \item[$a_2$)] $\aut(D_1)^0=\K$ and all fibers of $h:\K^2\to\K$ are smooth, irreducible and isomorphic. 
    \end{itemize}
 $b)$ $\aut(h)$ is infinite and up to conjugate by an element in $\autpol$ there are coprime integers $p,q\geq 1$ and $c\in\K^*$ such that one of the following possibilities occurs
\begin{itemize}
\item[$b_1)$] $p,q>1$, $h=x^q-cy^p$, $D_1=cpy^{p-1}\partial_x-qx^{q-1}\partial_y$ and  $\aut(D_1)=G_{p,q}$.
\item[$b_2)$] $h=x^qy^p-c$, $D_1=px\partial_x-qy\partial_y$ and $\aut(D_1)=G_{p,-q}$ or $G_{p,-q}\rtimes \Z/2\Z$ depending on $(p,q)\neq (1,1)$ or $(p,q)=(1,1)$, respectively.
\end{itemize}
$c)$  $\aut(h)$ is infinite, $\aut(D_1)\subset\aut(h)$ and up to conjugate by an element in $\autpol$ we have $h=x^{n_1}h_2^{n_2}\cdots h_\ell^{n_\ell}$ with $h_i$ rectifiable and not in $\K[x]$, $i=2,\ldots,\ell$.
  \end{thm}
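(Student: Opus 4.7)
The plan is to embed $\aut(D)$ and $\autsc(D)$ as closed subgroups of $\autpolh$ and then invoke Theorem \ref{thm_autpolh} on $h$. Any $\rho\in\autsc(D)$ conjugates $D$ to a nonzero scalar multiple of itself, so $\rho$ preserves $\ker D=\K[h]$ setwise; the induced $\K$-algebra automorphism of $\K[h]\cong\K[t]$ sends the generator $h$ to another generator, so $\rho(h)=\alpha h+\beta$ with $\alpha\in\K^*$, that is $\rho\in\autpolh$. Since $h$ is not equivalent to an element of $\K[x]$, Theorem \ref{thm_autpolh} ensures $\autpolh$ is algebraic. Now $\aut(D)$ and $\autsc(D)$ are cut out of $\autpolh$ by closed conditions: the commutation relation $\rho D=D\rho$ is a system of polynomial equations in the coefficients of $\rho=(f,g)$ as in (\ref{eq_isotropy}), and the proportionality relation $\rho D=\alpha D\rho$ corresponds to the vanishing of the $2\times 2$ minors of the coefficient vectors of $\rho D$ and $D\rho$. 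Hence both are closed subgroups of the algebraic group $\autpolh$ and are therefore algebraic.

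For the moreover part, I apply Theorem \ref{thm_autpolh} to $h$. Since $\K[h]=\ker D$ is integrally closed in $\pol$ \cite[Prop.~2.2]{NaNo}, $h$ cannot be an $n$-th power with $n>1$; this forces $n=1$ in sub-case $b_1)$ of Theorem \ref{thm_autpolh} and rules out sub-case $b_2)$ there altogether, so case $b)$ in the present statement combines sub-case $b_1)$ with $n=1$ and sub-case $b_3)$ of Theorem \ref{thm_autpolh}. Cases $a)$ and $c)$ follow quickly: in sub-case $a_1)$ of Theorem \ref{thm_autpolh} one has $\autpolh=\aut(h)$, hence $\aut(D_1)\subset\aut(h)$ as required by $a_1)$ here; in sub-case $a_2)$ of that theorem, $\autpolh^0=\K$ with the fibres of $h$ smooth and isomorphic, so either $\kappa_{D_1}(\aut(D_1))\subset\K^*\rtimes\{0\}$ yielding $a_1)$ again, or this image meets $\{1\}\rtimes\K$ non-trivially, forcing $\aut(D_1)^0=\K$ by connectedness and yielding $a_2)$ here with the fibre condition inherited from Theorem \ref{thm_autpolh}; in case $c)$ of Theorem \ref{thm_autpolh}, $\autpolh=\aut(h)$ and $h$ has the stated factorisation, so $\aut(D_1)\subset\aut(h)$.

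The main case is $b)$, where $D_1$ must be pinned down explicitly. Writing $D_1=a_1\partial_x+b_1\partial_y$ and expanding $D_1(h)=0$ gives $a_1h_x+b_1h_y=0$, so $D_1$ is a polynomial multiple of the Hamiltonian $X_h:=h_y\partial_x-h_x\partial_y$; irreducibility of $D_1$ then forces $D_1=\mu X_h/\gcd(h_x,h_y)$ for some $\mu\in\K^*$. For $h=x^q-cy^p$ with $p,q>1$ coprime, $\gcd(h_x,h_y)=1$ and one recovers the stated form of $D_1$ up to sign; for $h=x^qy^p-c$, $\gcd(h_x,h_y)=x^{q-1}y^{p-1}$ and $X_h$ factors as $x^{q-1}y^{p-1}(px\partial_x-qy\partial_y)$ up to sign, recovering $D_1=px\partial_x-qy\partial_y$. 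The identification of $\aut(D_1)$ with $G_{p,q}$, respectively $G_{p,-q}$ augmented by the involution $(x,y)\mapsto(y,x)$ when $(p,q)=(1,1)$, then follows by substituting the general form of an element of $\autpolh$ into the commutation relation $\rho D_1=D_1\rho$ and exploiting the weighted $\G_m$-grading that $D_1$ induces on $\pol$ to pin down the exponents appearing in $\rho(x)$ and $\rho(y)$. I expect this last commutation computation to be the main obstacle, in particular verifying that no unexpected polynomial automorphism beyond the stated subgroup commutes with the irreducibly normalised $D_1$.
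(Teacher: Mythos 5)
Your proposal follows essentially the same route as the paper: embed $\aut(D)$ (and $\autsc(D)$) as closed subgroups of $\autpolh$ via the observation that commuting with $D$ up to scalar forces $\rho(h)=\alpha h+\beta$, invoke Theorem \ref{thm_autpolh}, kill the exponents $n>1$ using that $\K[h]=\ker D$ contains no proper powers, and pin down $D_1$ from $a_1h_x+b_1h_y=0$ as the normalised Hamiltonian field. The one place where your argument is looser than it looks is the dichotomy in case $a_2)$ --- the image of $\kappa_{D_1}$ could a priori contain an element $(\alpha,\beta)$ with $\alpha\neq 1$ and $\beta\neq 0$ without meeting $\{1\}\rtimes\K$ nontrivially --- but the paper itself disposes of case $a)$ with ``follows readily from Theorem \ref{thm_autpolh},'' so your treatment matches its level of detail.
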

  \begin{proof}

   By what we have explained in the third paragraph of this subsection we already know that $\aut(D)$ is an algebraic group. Analogously, we may embed $\autsc(D)$ as a closed subgroup of $\autpolh\times\K^*$ which implies it is an algebraic group too. Assertion $a)$ follows readily from Theorem \ref{thm_autpolh}. Moreover, we also deduce that if $\aut(h)$ is infinite, then $h$ takes one of the forms $(x^q-cy^p)^n$, $(x^qy^p-c)^n$ or $h$ is as in $c)$; then $c)$ is already proven. In the first case $D(x^q-cy^p)=0$ and we deduce $n=1$; analogously for the second case.

    Now, if $D=a\partial_x+b\partial_y$ and $h=x^q-cy^p$, then $qx^{q-1}a+pcy^{p-1}b=0$, hence $D_1=cpy^{p-1}\partial_x-qx^{q-1}\partial_y$. Analogously, if  $h=x^qy^p-c$, then $qay+pby=0$ and so $D=px\partial_x-qy\partial_y$ for a polynomial $g_1\in\pol$. A straightforward calculation gives $\aut(D_1)$ is as stated in each case which proves $b)$ and completes the proof.
\end{proof}

\begin{exa}
If $r,s$ are positive integers, then a derivation of the form $D=g(sy^{s-1}\partial_x-rx^{r-1}\partial_y)$, with $g\in\pol$ different from 0, verifies $\ker D=\K[x^ry^s]$. As we have seen $\aut(x^ry^s)\simeq\K^*\times\K^*$ (see Example \ref{exas_rs}a)). A straightforward calculation gives $\aut(D)$ is a cyclic group of order $rs-r-s$ when $g\in\K^*$, hence $\aut(D)$ is finite for any $g$. An analogous conclusion follows for a derivation such that $\ker D=\K[x^ry^s(y-x^n)]$, for a positive integer $n$ (cf. Example \ref{exas_rs}b)).
\end{exa}

The example above motivates the following:

\begin{con}
  In case $c)$ of Theorem \ref{thm_ker} we have $\aut(D)$ is finite.
\end{con}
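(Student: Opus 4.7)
The strategy is to exploit the two constraints $\aut(D)\subset\aut(g)\cap\autsc(D_1)$ given by Proposition~\ref{pro_main}, together with the fact that every $\rho\in\aut(D)$ preserves $\ker D=\K[h]$ and hence satisfies $\rho(h)=\alpha h+\beta$ for some $(\alpha,\beta)\in\K^*\times\K$. First I would reduce to the case $\beta=0$, i.e., $\rho\in\aut(h)$: since $\ker D=\K[h]$ the polynomial $h$ is primitive, so the generic fiber of $h\colon\K^2\to\K$ is irreducible whereas $\{h=0\}$ has the $\ell\geq 2$ irreducible components $(x=0),(h_2=0),\ldots,(h_\ell=0)$. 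The (finite) set of $c\in\K$ for which $h^{-1}(c)$ is reducible is stable under the induced affine action $t\mapsto\alpha t+\beta$ on the base $\K$, so a finite-index subgroup of $\aut(D)$ fixes $c=0$, giving $\beta=0$. It is then enough to show that $\aut(D)\cap\aut(h)$ is finite.

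By Theorem~\ref{thm_autpolh} the group $\aut(h)$ is algebraic, and the kernel $K$ of its natural homomorphism onto the permutation group of the set $\{x,h_2,\ldots,h_\ell\}$ of irreducible factors of $h$ has finite index. Every $\rho\in K$ takes the form $(\alpha x,\beta y+P(x))$, because it stabilizes $h_1=x$, and satisfies $\rho(h_i)=\mu_i h_i$ for $i=2,\ldots,\ell$; as in the end of the proof of Lemma~\ref{lem_h}, these equations bound $\deg P$ and describe $K$ as a closed algebraic subgroup of the de Jonqui\`eres group. The ind-group $\aut(D)\cap K$ is therefore algebraic, and I would focus on proving that its connected component is trivial.

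For the final step one imposes the additional relations $\rho(g)=\gamma g$ and $\rho D_1=\gamma^{-1}D_1\rho$ coming from $\rho\in\aut(D)$, with $\gamma\in\K^*$ as in Proposition~\ref{pro_main}. Since $g$ is non-constant the curve $(g=0)$ is non-empty, and its irreducible components must be stabilized by $\aut(D)^0$, producing further linear constraints on $(\alpha,\beta,P)$ beyond those coming from the $h_i$. The main obstacle is that when the scaling characters attached to the individual factors $h_2,\ldots,h_\ell$ happen to be linearly independent over $\Z$, the group $\aut(D_1)\cap K$ can a priori contain a positive-dimensional torus, and one must show that no such torus survives the compatibility $\gamma\lambda=1$ once $g\notin\K$ is taken into account. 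I would attempt to rule this out by combining the identity $D_1(h)=0$, which forces the character by which $D_1$ transforms $h$ to be trivial, with the characters by which each $h_i$ transforms, and then argue via the resulting exact sequence of character lattices, together with Lemma~\ref{lem_agroup2}, that the system admits only the trivial continuous solution as soon as $\deg g\geq 1$.
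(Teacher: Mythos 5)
First, a point of orientation: the statement you are proving is stated in the paper as a \emph{conjecture}. The authors offer no proof, only the motivating computation in the example that precedes it, so there is no argument of theirs to compare yours against. Your preliminary reductions are fine as far as they go --- the passage from $\rho(h)=\alpha h+\beta$ to $\beta=0$ via the finite, nonempty set of reducible fibres, and the algebraicity of the kernel $K$ via the bound on $\deg P$ --- but they essentially reprove what Theorem \ref{thm_ker} and Theorem \ref{thm_autpolh} already give, namely that $\aut(D)$ is an algebraic subgroup of $\aut(h)$. The entire content of the conjecture is your final step, and there you write ``I would attempt to rule this out by\dots'': that is a research plan, not an argument. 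Moreover the plan is explicitly restricted to $\deg g\geq 1$, whereas the conjecture also covers $g\in\K^*$, i.e.\ $D=D_1$ irreducible, which is the central case.

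More seriously, the obstacle you correctly isolate --- a positive-dimensional torus surviving inside $\aut(D_1)\cap K$ --- is not removable in general, so no completion of your outline can succeed without first amending the statement. Take coprime integers $p,q\geq 1$ and $D=D_1=px\partial_x-qy\partial_y$. Computing on monomials, $D(x^iy^j)=(pi-qj)x^iy^j$, so $\ker D=\K[x^qy^p]$, and $h=x^q\cdot y^p$ is literally of the form $x^{n_1}h_2^{n_2}$ of case $c)$, with $h_2=y$ rectifiable and not in $\K[x]$ (and $h$ is not equivalent to an element of $\K[x]$, since its two components meet while the components of any $P(\varphi(x))$ are disjoint fibres of a coordinate). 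Yet $\rho=(\alpha x,\beta y)$ satisfies $\rho(D(x))=p\alpha x=D(\rho(x))$ and $\rho(D(y))=-q\beta y=D(\rho(y))$ for \emph{all} $\alpha,\beta\in\K^*$, so $\aut(D)$ contains a two-dimensional torus and is infinite. (The relation $D(h)=0$ imposes only one linear condition, $q\chi_1+p\chi_2=0$, on the weights of the torus action on $a$ and $b$, not on the isotropy itself; this is exactly where your character-lattice argument would have to, but cannot, produce a second independent constraint.) The same ring $\K[h]$ is generated by $x^qy^p-c$, which would place $D$ in case $b_2)$ instead, so the trichotomy of Theorem \ref{thm_ker} is itself ambiguous here; but under either reading your strategy fails for this $D$. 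Any correct version of the conjecture must exclude the purely monomial situation --- compare Example \ref{exas_rs}$b)$, where the extra component $y-x^n$ does cut the torus down to a finite group --- and any proof must make essential use of such a non-toric component, which your outline does not do.
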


\section{derivations with $\eig(D)=\infty$ and $\ker D=\K$}

We assume $D$ to be an irreducible derivation with $\eig(D)=\infty$ and $\ker D=\K$; as we have pointed out in Remark \ref{rem_integral} this situation actually occurs. Hence $D=a\partial_x+b\partial_y$, with $a,b$ polynomials having no non-constant factors, $ab\neq 0$ and $D$ admits a rational first integral $f/g$ where $f,g\in\K[x,y]\setminus\K$ have no nontrivial common factors.

\begin{exa}
    If $f,g\in\K[x,y]$, then the derivation $D_{f,g}=(f_yg-fg_y)\partial_x-(f_xf-fg_x)\partial_y$ admits $f/g$ as rational first integral. Moreover, a straightforward calculation shows $D(f/g)=0$ if and only if there are $a,b\in\K[x,y]$ such that $aD=bD_{f,g}$.
    \end{exa}

Except for the second part of the proof of Theorem D below, what we will develop in this section is well known the specialists of (for example) holomorphic singular foliations.   

Recall that a rational function $\tau=P/Q\in\K(x)\setminus\K$, with $P,Q\in\K[x]$ polynomials without nontrivial common factors, is said to have \emph{degree} $d\geq 1$ if $d:=\max\{\deg P,\deg Q\}$. The degree 1 rational functions define the $\K$-automorphisms of $\K(x)$.

Now, if $\tau\in\K(x)$, then $\tau(f/g)$ is also a rational fist integral for $D$. We will say $f/g$ is \emph{minimal} if there are not another rational first integral $f_1/g_1$ and a rational function $\tau\in\K(x)\setminus\K$ of degree $\geq 2$ such that $f/g=\tau(f_1/g_1)$ (here we were inspired by \cite[\S 3]{NaNo} and will try in Lemma \ref{lem_rat} below to get a result analogous to Proposition 3.3 therein).

Note that $f/g$ may be thought of as a rational map $\K^2\tor\K$. By  considering the natural completion of the (affine) plane $\P^2=\K^2\cup L_infty$ where $L_infty$ is the line at infinite, we then may extend $f/g$ to define a rational map $F:\P^2\tor\P^1=\K\cup\{\infty\}$ which maps a general point in each component of the curve $g=0$ onto $\infty\in\P^1$; note that a general fiber of that map consists of the closure in $\P^2$ of a curve of equation $f+\alpha g=0$ for some $\alpha\in\K$. Moreover, by resolving the indeterminacy of $F$ (by means of the finite number of blow-ups of the so-called base points of $F$, i.e. the proper and infinitely near common zeros of $f$ and $g$ over $\P^2$) we get a morphism $\widehat{F}:X\to\P^1$ whose general fibers are desingularizations of the closure of general fibers of $F$.

Analogously, a rational function $\tau\in\K(x)\setminus\K$ may be thought of as a rational map $\K\tor\K$ and defines a finite morphism $\P^1\to\P^1$ that we still denote by $\tau$. The degree of $\tau$ is precisely the number of points in a general fiber when we are thinking of it as a morphism. Then $\tau$ has degree 1 if and only if it corresponds to an element in the automorphism group $\pgl(2,\K)$ of $\P^1$.    
 
\begin{lem}\label{lem_irr}
If $f/g$ is minimal, then for a general $\alpha$ the polynomial $f+\alpha g$ is irreducible. 
\end{lem}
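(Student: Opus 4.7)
The strategy is to view $F:=f/g$ as a rational map $\P^2\tor\P^1$ and apply Stein factorization to its resolution. The minimality of $f/g$ will force the finite part of that factorization to be trivial, and then Bertini's theorem will yield the irreducibility of a general $f+\alpha g$.

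First, resolve the base points of $F$ by a sequence of blow-ups $\sigma:X\to\P^2$, obtaining a morphism $\widehat{F}:X\to\P^1$. Only finitely many $\alpha\in\K$ can lie in the image under $\widehat{F}$ of an exceptional component of $\sigma$, so for a generic $\alpha$ the fibre $\widehat{F}^{-1}(\alpha)$ coincides, as a reduced subscheme, with the strict transform in $X$ of the affine curve $\{f+\alpha g=0\}$; consequently the number of irreducible components of $\widehat{F}^{-1}(\alpha)$ equals that of $f+\alpha g$ in $\K[x,y]$. Now take the Stein factorization $\widehat{F}=\tau\circ\pi$, with $\pi:X\to C$ having connected fibres, $C$ a smooth projective curve, and $\tau:C\to\P^1$ finite. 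Since $X$ is rational, its irregularity vanishes, forcing $g(C)=0$ and $C\simeq\P^1$. Thus $\pi$ corresponds to a rational function $F_1\in\fra$, which in lowest terms I write $F_1=f_1/g_1$, and which satisfies $F=\tau(F_1)$. Applying $D$ to this identity and using $D(F)=0$ together with $\tau'(F_1)\neq 0$ gives $D(F_1)=0$, so $f_1/g_1$ is itself a rational first integral of $D$.

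By Bertini's theorem in characteristic zero a general fibre of $\pi$ is smooth, and being connected it is irreducible; a generic fibre of $\widehat{F}$ is therefore a disjoint union of exactly $\deg\tau$ smooth irreducible curves. If $f+\alpha g$ were reducible for generic $\alpha$, this fibre would have at least two irreducible components, so $\deg\tau\geq 2$; combined with the factorization $F=\tau(f_1/g_1)$ and the fact that $f_1/g_1$ is a first integral, this contradicts the minimality of $f/g$.

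I expect the main technical point to be the clean comparison between the irreducible components of $\widehat{F}^{-1}(\alpha)$ and those of $\{f+\alpha g=0\}\subset\P^2$ for a generic $\alpha$ --- that is, verifying that no exceptional curve contracted by $\widehat{F}$ lies over a general value of $\alpha$ --- since this requires careful tracking of the base points of $F$ through the resolution $\sigma$.
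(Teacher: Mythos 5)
Your proof is correct and follows essentially the same route as the paper's: resolve the base points of $F$, apply Stein factorization to $\widehat{F}$, identify the intermediate curve with $\P^1$, and contradict minimality. The only cosmetic differences are that you deduce $C\simeq\P^1$ from the vanishing irregularity of the rational surface $X$ while the paper notes that $X$ contains a rational curve dominating the intermediate curve, and that you explicitly verify $f_1/g_1$ is again a rational first integral (via the chain rule), a detail the paper leaves implicit.
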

\begin{proof}
  Assume for a moment that  $f+\alpha g$ is reducible for any $\alpha$. By Bertini's Theorem the corresponding morphism $\widehat{F}:X\to\P^1$ have disconnected fibers. Stein Factorization Theorem then implies there exists a morphism $G:X\to B$ onto a smooth projective curve and a finite morphism $\eta:B\to\P^1$ of degree $d\geq 2$ such that $\eta G=\widehat{F}$. Since $F$ admits at least a base point we know that $X$ contains a rational curve which maps onto $B$, so $B\simeq \P^1$. We conclude $\eta$ induces a rational function $\tau\in\K(x)$ of degree $d>1$ and $G$ induces a rational map $f_1/g_1:\K^2\tor\K$ such that $f/g=\tau(f_1/g_1)$: contradiction. 
\end{proof}

The set of \emph{singular points} of $D$ is by definition the subset $\sing(D)$ of $\K^2$ which consists of all common zeros of $a=b=0$, or equivalently, the maximal ideal associated to the ideal generated by $a$ and $b$. Clearly $D$ irreducible is equivalent to $\sing(D)$ being finite (maybe empty). Note that $\sing(D)$ may be thought of as the set whose elements are the maximal ideals stable under $D$.

Denote by $\ideala=\ideala_{f,g}$ the ideal generated by $f,g$, which has height $2$. Since $D(\ideala)\subset \ideala$ we deduce $D$ stabilizes all maximal ideal associated to $\ideala$ (\cite{Sei}). If $(x_0,y_0)\in\K^2$ is a point such that $f(x_0,y_0)=g(x_0,y_0)=0$, then we deduce $(x_0,y_0)\in\sing(D)$: indeed, the maximal ideal $\idealm$ generated by $x-x_0$ and $y-y_0$ is stable under $D$ if and only if $a,b\in \idealm$. Conversely, if $D(f)=\lambda f$, $D(g)=\lambda g$ and $(x_0,y_0)$ is a common zero of $a$ and $b$, then $(x_0,y_0)$ is a common zero of $f$ and $g$ or $(x_0,y_0)$ is a zero of $\lambda$.

\begin{lem}\label{lem_rat}
Let $f/g$ and $f_1/g_1$ be two minimal rational first integrals for $D$. Then there is a $\K$-automorpohism $\tau:\K(x)\to\K(x)$ such that $f/g=\tau(f_1/g_1)$.
\end{lem}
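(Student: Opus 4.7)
The plan is to show that the field of all rational first integrals is purely transcendental of transcendence degree $1$ over $\K$, and then apply the minimality hypothesis to both integrals.

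First I would consider the subfield $F_D := \{h \in \fra : D(h) = 0\}$ of $\fra$. The Leibniz rule shows this is indeed a subfield containing $\K$; it is properly contained in $\fra$ since $D \neq 0$, and properly contains $\K$ since $f/g \in F_D \setminus \K$. Therefore $\mathrm{trdeg}_\K F_D = 1$. Applying the generalized L\"uroth theorem to this subfield of transcendence degree $1$ of $\fra$ (which is classical in characteristic zero, with $\K$ algebraically closed inside $F_D$), I would obtain a rational first integral $\phi \in F_D$ with $F_D = \K(\phi)$.

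Next, since both $f/g$ and $f_1/g_1$ lie in $F_D = \K(\phi)$, there exist rational functions $\tau, \tau_1 \in \K(x) \setminus \K$ with
\[ f/g = \tau(\phi), \qquad f_1/g_1 = \tau_1(\phi). \]
If $\deg \tau \geq 2$, the first equality exhibits $f/g$ as the image under a degree $\geq 2$ element of $\K(x)$ of the rational first integral $\phi$, directly contradicting the minimality of $f/g$. Hence $\deg \tau = 1$, so $\tau$ is a $\K$-automorphism of $\K(x)$; the symmetric argument applied to $f_1/g_1$ gives $\deg \tau_1 = 1$.

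Finally, inverting the second equation yields $\phi = \tau_1^{-1}(f_1/g_1)$, whence $f/g = (\tau \circ \tau_1^{-1})(f_1/g_1)$ with $\tau \circ \tau_1^{-1}$ a $\K$-automorphism of $\K(x)$, as required. The main obstacle is justifying the L\"uroth-type step for subfields of $\fra$ of transcendence degree $1$, but this is standard in characteristic zero; once the canonical generator $\phi$ of the field of first integrals is secured, the rest of the argument is essentially formal and merely uses minimality of each of the two integrals once.
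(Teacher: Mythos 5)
Your proof is correct, but it takes a genuinely different route from the paper. The paper argues geometrically: through a general (non-singular) point of $D$ there is a unique formal solution, hence a unique $D$-stable prime divisor through that point; by Lemma \ref{lem_irr} (minimality implies the general member $f+\alpha g$ is irreducible) that divisor is cut out by a \emph{unique} member of the pencil spanned by $f,g$, and doing the same for $f_1,g_1$ forces the two pencils to coincide member by member, so $\{f,g\}$ and $\{f_1,g_1\}$ span the same $2$-dimensional space and the two integrals differ by a M\"obius transformation. You instead work field-theoretically: the field of constants $F_D=\{h\in\fra: D(h)=0\}$ has transcendence degree $1$, the generalized L\"uroth theorem gives $F_D=\K(\phi)$, and minimality forces each integral to be a degree-one function of $\phi$. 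Your argument is shorter and cleanly isolates where minimality is used, and as a bonus it shows that a minimal first integral always exists (namely $\phi$); its cost is that the L\"uroth step for a transcendence-degree-one subfield of $\K(x,y)$ is exactly the nontrivial geometric input (a unirational curve is rational), which is essentially what the paper's Lemma \ref{lem_irr} extracts via Stein factorization --- so the two proofs are cousins, with the geometry packaged differently. Two small points you should make explicit: first, $\mathrm{trdeg}_\K F_D=1$ is not just because $F_D\subsetneq\fra$; if the degree were $2$ then $\fra$ would be algebraic over $F_D$ and, in characteristic zero, $D$ would vanish identically, a contradiction. Second, the L\"uroth-type theorem you invoke needs $\K$ algebraically closed in $F_D$, which holds here since $\K$ is algebraically closed.
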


\begin{proof}
If  $(x_0,y_0)\not\in\sing(D)$, then there is a unique ``formal'' solution $\theta:\pol\to\K[[t]]$ of $D$ passing through the maximal ideal $\idealm$ generated by $x-x_0, y-y_0$, that is, $\theta$ is a $\K$-homomorphism such that $\theta D= \partial_t\theta$ with $\idealm=\theta^{-1}((t))$ (see \cite[Thm. 1.1]{BLL} or \cite[Thm. 7]{BP} for more details). Note that  $\idealp=\theta^{-1}((0))$ is the biggest prime ideal contained in $\idealm$ which is stable under $D$. 

  If $g(x_0,y_0)\neq 0$, then there exists a unique  $\alpha=\alpha_{(x_0,y_0)}\in\K$ such that $f+\alpha g$ vanishes at $(x_0,y_0)$. Since  $f+\alpha g$ is an eigenvector we know that all of its irreducible factors define $D$-stable prime divisors. We deduce that one of these factors generate $\idealp$.

  Since $f/g$ is minimal, for a general $\alpha$ the polynomial $f+\alpha g$ is irreducible (Lemma \ref{lem_irr}). Hence there is a Zariski dense open subset $U$ of $\K^2$ such that $(x_0,y_0)\in U$ implies the solution of $D$ passing through that point determines a unique linear combination of $f$ and $g$ over $\K$. By repeating the same reasoning with $f_1/g_1$ we finally conclude $\{f,g\}$ and $\{f_1,g_1\}$ generate the same vector space over $\K$. Thus there are $\alpha,\beta,\gamma,\delta\in\K$, with $\alpha\delta-\beta\gamma\neq 0$ such that $f/g=\frac{\alpha f_1+\beta g_1}{\gamma f_1+\delta g_1}$ which proves the lemma.
  \end{proof}

  Let us now take $\rho\in\aut(D)$.  If $\idealm$ is a maximal ideal in $\sing(D)$, then the maximal ideal $\rho(\idealm)$ satisfies $D(\rho(\idealm))=\rho D(\idealm)\subset \rho(\idealm)$. Then $\rho$ permutes the singular points of $D$.

  On the other hand, $\rho(f/g)$ is clearly another rational first integral, hence Lemma \ref{lem_rat} gives $\rho(f/g)=\tau(f/g)$ for an (unique) automorphism $\tau$ of $\K(x)$. In particular, $\rho$ permutes the maximal ideals associated to $\ideala_{f,g}$.

 As before $f/g$ and $\rho$ may be thought of as a rational map $H:\P^2\tor\P^1$ and a birational map $\rho:\P^2\tor\P^2$, respectively. Hence $H\rho=\tau H$.

 If $\sigma:X\to\P^2$ is the blow-up of the (proper and infinitely near) points where $H$ is not defined, then $H_1:=H\sigma:X\to\P^1$ is a morphism and $\rho_1:=\sigma\rho\sigma^{-1}:X\tor X$ is a birational map such that $\tau H_1=H_1\rho_1$.
  
    \noindent{\bf Proof of Theorem \ref{thm_D}}. Let $H_1:X\to\P^1$ be as above. There is $g\geq 0$ such that a general fiber of that morphism is a smooth irreducible curve o genus $g\geq 0$. By construction such a general fiber is the desingularization of a curve of the form $\alpha f+g$, for a general $\alpha\in\K$, so the first part of the theorem is clear. In order to prove the second part we assume $g\geq 1$.

Now we repeat the reasoning we have already used to prove the first assertion of Theorem \ref{thm_autpolh}. The rational map $H:\P^2\tor\P^1$ above defines a pencil $\Lambda$ on $\P^2$ whose general member is a curve of positive genus. Hence there is not a birational map between natural completions $\varphi:(\P^2,L_\infty)\tor (X,B)$ such that $\varphi_*\Lambda$ is composed by lines. Corollary \ref{cor_blst4} implies $\bir((\P^2,L_\infty),\Lambda)$ is an algebraic group. Moreover, by taking into account \cite[Lem. 2.6]{BlSt} and its proof we conclude that $\aut(D)$ may be embeded in that group as a closed subgroup. Thus $\aut(D)$ is an algebraic group, and this completes the proof of Theorem \ref{thm_D}.     
\qed

\begin{appendix}
\section{Darboux's Theorem}
All along this appendix $\K$ denotes a field of characteristic 0. A derivation  $D\in\derpol$ extends to a derivation $\fra\to\fra$ by means of the quotient formula
\[D(f/g)=\frac{D(f)g-fD(g)}{g^2}.\]  
In particular we have the partial derivatives $\partial_x,\partial_y:\fra\to\fra$. A straightforward calculation gives that the Schwartz equality 
\[\partial_x\circ\partial_y=\partial_y\circ\partial_x\]
holds on $\fra$; we will also write $f_x=\partial_x(f)$ and $f_y=\partial_y(f)$.

We have a ``dual de Rham complex'' (we are identifying $\derpol\wedge\derpol$ with $\pol$ in the natural form)
 \[\xymatrix{0\ar@{->}[r]&\pol\ar@{->}[r]^{\delta_1\ \ \ }&\derpol\ar@{->}[r]^{\ \ \ \delta_2}&\pol\ar@{->}[r]&0},\]
where $\delta_1(f)=f_y\partial_x-f_x\partial_y$ and $\delta_2(g\partial_x+h\partial_y)=g_x+h_y$. 

Finally, we consider the homomorphism of $\pol$-modules $\wedge D:\derpol\to\pol$, $D'\mapsto D'\wedge D$. If $D=a\partial_x+b\partial_y$ we denote by $I_D$ the ideal generated by $a$ and $b$ and take $c\in\pol$ to be a higher common divisor of $a$ and $b$. We have an exact sequence
\[\xymatrix{0\ar@{->}[r]&\pol D_0\ar@{->}[r]&{\derpol}\ar@{->}[r]^{\ \ \ \wedge D}&I_D\ar@{->}[r]&0},\] 
where $D_0=(a/c)\partial_x+(b/c)\partial_y$. 

\begin{rems}\label{rem_eigenvector}

a) The composition $\delta_1\circ \wedge D$ is nothing but $D$, and moreover, we have $\delta_2(fD)=D(f)+f\delta_2(D)$.
 
b) By considering $D$ as a rational $\K$-derivation we obtain a dual de Rham complex of vector spaces over $\fra$ and an exact sequence as above where we replace $D_0$ and $I_D$ with $D$ and $\fra$, respectively.
\end{rems}

The following Lemma is straightforward 

\begin{lem}\label{lem_darboux}
Let  $D\in\derpol$ be a derivation and let $g_1,\ldots, g_\ell\in\pol$ be coprime polynomials such that $g_i\not\in\ker D$ for all $i$; set $q_i:=g_1\cdots\hat{g_i}\cdots g_\ell$, $i=1,\ldots,\ell$. If $g_i,D(g_i)$ are coprime for every $i$, then the  polynomials 
 \[ q_iD(g_i), i=1,\ldots,\ell,\]
are linearly independent over $\K$.
\end{lem}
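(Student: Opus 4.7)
The plan is to argue by divisibility in $\pol$, exploiting the coprimality hypotheses. Suppose we have a dependence relation
\[\sum_{i=1}^\ell c_i q_i D(g_i)=0\]
with $c_i\in\K$, and fix an index $j\in\{1,\ldots,\ell\}$. I would first observe that for every $i\neq j$ the polynomial $g_j$ appears as a factor of $q_i=g_1\cdots\hat{g_i}\cdots g_\ell$, so $g_j$ divides $\sum_{i\neq j} c_i q_i D(g_i)$, and hence (from the vanishing of the full sum) $g_j$ divides $c_j q_j D(g_j)$.

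Next I would use the two coprimality assumptions to peel off the factors $q_j$ and $D(g_j)$. On the one hand, $g_j$ is coprime to $q_j$, because $q_j$ is a product of the $g_i$ for $i\neq j$ and by hypothesis the family $g_1,\ldots,g_\ell$ is coprime (so in particular $g_j$ shares no common factor with any $g_i$, $i\neq j$). On the other hand, by assumption $g_j$ is coprime to $D(g_j)$. These two facts together force $g_j$ to divide $c_j$ in $\pol$.

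Finally, the hypothesis $g_j\notin\ker D$ guarantees $g_j\notin\K$ (any constant lies in $\ker D$); since $c_j\in\K$ and $g_j$ is non-constant, the divisibility $g_j\mid c_j$ is only possible when $c_j=0$. As $j$ was arbitrary, all coefficients vanish, proving linear independence over $\K$.

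There is essentially no obstacle: the argument is a one-pass application of unique factorization in $\pol$, and the only point that deserves a small comment is the verification that ``coprime family'' is strong enough to make $g_j$ coprime to $q_j$, which is immediate from the definition.
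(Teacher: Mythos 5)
Your proof is correct: the paper itself declares this lemma ``straightforward'' and gives no argument, and your divisibility argument (isolate $j$, note $g_j$ divides every other term, then use Euclid's lemma in the UFD $\pol$ together with $g_j\notin\K$ to kill $c_j$) is exactly the intended one-line justification. Your side remark is also the right one to make: the hypothesis must be read as \emph{pairwise} coprimality, which is how the lemma is used in the proof of Darboux's theorem, where the $g_i$ are pairwise non-associate irreducible polynomials.
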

\qed

Now we are prepared to prove 

\begin{thm}$($Darboux, 1879$)$
Let $D=a\partial_x+b\partial_y$ be a derivation and set $d:=\max\{\deg a,\deg b\}$. If $\eig(D)$ contains at least $2+d(d+1)/2$ irreducible elements, then there is $h\in\fra\setminus\K$ such that $D(h)=0$. 
\end{thm}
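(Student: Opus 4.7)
The plan is to extract from the abundance of eigenvectors two independent rational derivations in the kernel of $\wedge D$ and then manufacture a non-constant first integral as the ratio of the associated ``integrating factors''.

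First I would bound the degrees of the eigenvalues. Let $f_1,\ldots,f_N$ be pairwise non-proportional irreducible representatives of $N\geq 2+d(d+1)/2$ classes in $\eig(D)$, with $D(f_i)=\lambda_i f_i$. From $\deg D(f_i)\leq \deg f_i + d - 1$ I read off $\deg \lambda_i \leq d-1$, so all the $\lambda_i$ lie in the $\K$-vector space $V$ of polynomials of degree $\leq d-1$, whose dimension is $d(d+1)/2$. The linear map $\K^N\to V$, $c=(c_i)\mapsto \sum c_i\lambda_i$, then has kernel of dimension $\geq N-d(d+1)/2\geq 2$; I would pick two linearly independent relations $c^{(1)},c^{(2)}$ in this kernel.

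Next, for each relation $c$ with $\sum c_i\lambda_i=0$, I would introduce the rational derivation
\[D_c := \sum_{i=1}^N c_i\, \frac{\delta_1(f_i)}{f_i}\;\in\;\fra\otimes_\pol\derpol.\]
Since Remark 4.2(a) gives $\delta_1(f_i)\wedge D = D(f_i) = \lambda_i f_i$, we have $D_c\wedge D=\sum c_i\lambda_i=0$, and the rational version of the exact sequence in Remark 4.2(b) then supplies a unique $h_c\in\fra$ with $D_c=h_c D$. Using the Leibniz identity $\delta_2(fX)=X(f)+f\delta_2(X)$ together with $\delta_2\circ\delta_1=0$ (which is the Schwartz identity), a direct computation yields $\delta_2(\delta_1(f_i)/f_i)=0$, and hence $\delta_2(D_c)=0$. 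Applying $\delta_2$ to $h_c D$ gives $D(h_c)=-h_c\,\delta_2(D)$. Writing $h_j$ for the function attached to $c^{(j)}$ and setting $h:=h_1/h_2$, a short calculation shows $D(h)=0$.

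The hard part will be to show $h\notin\K$. This reduces to the following key lemma: the rational derivations $\delta_1(f_i)/f_i$ are $\K$-linearly independent in $\fra\otimes\derpol$ whenever the $f_i$ are pairwise non-proportional irreducible polynomials. Granting the lemma, an equality $h=\gamma\in\K$ would imply $D_{c^{(1)}}=\gamma D_{c^{(2)}}$, i.e. $\sum_i (c^{(1)}_i-\gamma c^{(2)}_i)\,\delta_1(f_i)/f_i=0$, forcing $c^{(1)}=\gamma c^{(2)}$ and contradicting the independence of $c^{(1)},c^{(2)}$. To prove the lemma I would clear denominators to obtain $\sum_i c_i\bigl(\prod_{j\neq i}f_j\bigr)\delta_1(f_i)=0$ in $\derpol$ and examine the $\partial_x$ and $\partial_y$ components separately. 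Reducing each component modulo $f_k$ kills every term with $i\neq k$, leaving $c_k\bigl(\prod_{j\neq k}f_j\bigr)(f_k)_y\equiv 0\pmod{f_k}$ and the analogous congruence for $(f_k)_x$. Since $f_k$ is irreducible and coprime to $\prod_{j\neq k}f_j$, and since $(f_k)_x,(f_k)_y$ have degree strictly smaller than $\deg f_k$ and cannot both vanish (as $f_k\notin\K$), one concludes $c_k=0$ for every $k$, completing the proof.
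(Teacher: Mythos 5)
Your proposal is correct and follows essentially the same route as the paper: bound the degrees of the eigenvalues, extract linear relations $\sum c_i\lambda_i=0$ by a dimension count, form the logarithmic rational derivations $\sum c_i\,\delta_1(f_i)/f_i$, which lie in $\ker(\wedge D)\cap\ker\delta_2$, and obtain the first integral as the ratio of the two resulting multiples of $D$, with non-constancy coming from linear independence of the $\delta_1(f_i)/f_i$ proved by reduction modulo each $f_k$. The only (cosmetic) difference is that the paper chooses two relations supported on $\{f_1,\dots,f_{N-1}\}$ and $\{f_2,\dots,f_N\}$ respectively and invokes its Lemma A.2 on the polynomials $q_iD(g_i)$, whereas you take any two independent relations and prove independence of the rational derivations directly using both components of $\delta_1(f_i)$, which neatly avoids the coprimality hypothesis of that lemma.
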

\begin{proof}
  Let $\{f_1,\ldots,f_N\}\subset\eiga(D)$ be a subset of irreducible eigenvector such that $[f_i]\neq [f_j]$ for if $i\neq j$, and assume $N\geq 2+d(d+1)/2$. Let $\lambda_1,\ldots,\lambda_N$ be eigenvalues of $f_1,\ldots,f_N$, respectively. If some $\lambda_i$ is zero, there is nothing to prove, then we assume also $\lambda_i\neq 0$ for all $i=1,\ldots,N$. Hence $a\partial_x(f_i)+b\partial_y(f_i)$ is different from zero and is divided by $f_i$, for all $i$. We deduce $\deg\lambda_i\leq d-1$ for  $i=1,\ldots,N$.

Since the space of polynomials of degree $\leq d-1$ has dimension $d(d+1)/2$ we deduce there are $a_1,\ldots,a_{N-1},b_2,\ldots,b_N\in\K$ such that 
\begin{equation}\label{eq_lincomb}
\sum_{i=1}^{N-1}\alpha_i\lambda_i=\sum_{j=2}^N\beta_j\lambda_j=0;
\end{equation}  by reordering, if necessary, we suppose $a_1b_N\neq 0$.  

Now we consider the rational derivations 
\[D_1=\sum_{i=1}^{N-1}\frac{\alpha_i}{f_i}(\delta_1(f_i)),\ \ D_2=\sum_{j=2}^N\frac{\beta_j}{f_j}(\delta_1(f_j)).\]
Since $\delta_2(\delta_1(f_i)/f_i)=0$ for all $i$ we get $\delta_2(D_1)=\delta_2(D_2)=0$. Note that since $\alpha_1\beta_N\neq 0$ Lemma \ref{lem_darboux} implies $D_1\neq 0$ and $D_2\neq 0$ (recall $\car(\K)=0$), and a posteriori both derivations are linearly independent over $\K$.

By Remark \ref{rem_eigenvector}a) we know the equalities in (\ref{eq_lincomb}) signify $D_1,D_2\in\ker(\wedge D)$. Hence there is $h\in\fra\setminus\K$ such that $D_1=h D_2$. By applying that remark again we also deduce $\delta_1(h)=0$, which completes the proof.
\end{proof}

\end{appendix}


\begin{thebibliography}{CAS94}


% \bibitem[ACGH]{ACGH} E. Arbarello, M. Cornalba, P.A.Griffiths, J. Harris, \emph{Geometry of algebraic curves}, Volume I, Springer-Verlag, 1985.

  \bibitem[ACNL94]{ACNL} E. Artal Bartolo, P. Cassou-Nogues and I. Luengo Velasco 3, \emph{On polynomials whose fibers are irreducible
      with no critical points}, Math. Ann. 299 (1994), 477-490.
    
% \bibitem[AGP]{AGP1} D. Avritzer, G. Gonzalez-Sprinberg and I. Pan, \emph{On cremona transformations and quadratic complexes}. Rend. Circ. Mat. Palermo, 57 (2008), 353–375.

% %\bibitem[AP]{AP} D. Avritzer, C. Peskine, \emph{On Cremona transformations and linear system of quadrics}....

% \bibitem[Be]{Be} L. Berzolari, \emph{Algebraische Transformationen and Korrespondenzen.} Encyklop\"adie der mathematischen Wissenschaften, dritter Band: Geometrie, 2.2.B. Teubner, 1932.

\bibitem[B2016]{Baltazar} R.Baltazar, On simple Shamsuddin derivations in two variables, Annals of the Brazilian Academy of Sciences, 88(4) (2016), 2031-2038.

\bibitem[BP2015]{BP} R. Baltazar and I. Pan, \emph{On solutions for  derivations of a Noetherian $k$-algebra and local simplicity},  Comm. in Algebra, v.43 7 (2015), pp. 2739-2747.

\bibitem[BlSt2015]{BlSt} J. Blanc and I. Stampfli, \emph{Automorphisms of the plane preserving a curve}, Algebraic Geometry 2 (2) (2015) 193–213.

\bibitem[BSU2015]{BSU} M. Brion, P. Samuel and
V. Uma, \emph{Lectures on the structure
of algebraic groups and
geometric applications}, https://www-fourier.ujf-grenoble.fr/~mbrion/chennai.pdf
  
\bibitem[BLL]{BLL} P. Brumatti, Y. Lequain, D. Levcovitz, \emph{Diferential simplicity in polynomial rings and algebraic independence of power series}, J. London Math. Soc. 68, (2), 615-630, 2003.
  
% % \bibitem[B]{Br} M. Brunella, \emph{Birational Geometry of Foliations}, First Latinoamerican Congress of Mathematics, IMPA, 2001.

%  \bibitem[BV]{BV} A. Bruno and A. Verra, \emph{The quadro-quadric Cremona transformations of $\P^4$ and
% $\P^5$}, Mem. Accad. Sci. Torino, Cl. Sci. Fis. Mat. Nat. (5) 35, p. 3-21 (2011).

\bibitem[Ch1960]{Ch} C. Chevalley, \emph{Une d\'emonstration d’un th/'eor\`eme sur les groupes alg\'ebriques}, J. Math\'ematiques Pures et Appliqu\'ees, 39
(4), 1960, pp. 307–317.

% \bibitem[Co]{Co} F. Conforto, \emph{Le superficie razionali}, Zanichelli, Bologna (1939).

% % \bibitem[C-S]{CaSa82} C. Camacho and P. Sad, \emph{Invariant varieties through singularities of homolorphic vector fields}, Ann. of Math., 115, 579-595 (1982).

\bibitem[CaFa2003]{CaFa} S. Cantat et Ch. Favre, \emph{Sym\'etries birationnelles des surfaces feuillet\'ees}, J. reine angew. Math. 561 (2003), 199—235. 

% \bibitem[CM]{CM} A. Conte, J.P.Murray \emph{Algebraic varieties of dimension three whose hyperplane sections are Enriques surfaces}. Ann. Scuola Norm. Sup. Pisa Cl. Sci.(4), 12(1):43–80 (1985).

\bibitem[CMP2018]{CMP} G. Cousin, L. G. Mendes and I. Pan, \emph{Birational geometry of foliations associated to simple derivations}, to appear in Bull. Soc. Math. de France.

\bibitem[Da1878]{Da} J. G. Darboux, \emph{M\'emoire sur les \'equations diff\'erentielles alg\'ebriques du premier ordre
et du premier degr\'e (M\'elanges)}, Bull. Sci. math. 2\'eme s\'erie 2 (1878), 60–96; 123–144; 151–200.

% \bibitem[DH]{DH} J. D\'eserti and F. Han, \emph{On cubic birational maps of $\P^3$}, to appear in Bull. Soc. Math. France.
 
% A. Durfee, \emph{Fifteen characterization of rational double points and simple critical points}, L'Enseignement Math\'ematique, Vol. 25, 1979.

% \bibitem[ESB]{ESB} L. Ein, L. Shepherd-Barron, \emph{Some special Cremona transformations}, Amer. J. Math. 111 (1989), pp. 783-800.

% \bibitem[Ell]{Ell} G. Ellingsrud, \emph{Sur le sch\'ema de Hilbert des vari\'et\'s de codimension 2 dans $\P^e$ \`a c\^one de Cohen-Macaulay}, Ann. Sci. \'Ecol. Norm. Sup., 4e, t. 8 (1975), pp. 423-432.
 
% \bibitem[FOV]{FOV} H. Flenner, L. O'Carrol, W. Vogel, \emph{Joins and intersections}, Springer Monographs in Mathematics, 1999.

\bibitem[Fr-Ja1974]{FrJa} G. Frey, M. Jarden, \emph{Approximation Theory and the Rank of Abelian Varieties Over Large Algebraic Fields}, London Math,. Soc., Volumes3-28, Issue 1 (1974), pp. 112-128


\bibitem[Fu1982]{Fu}  M. Furushima, \emph{Finite groups of polynomial automorphisms in the complex
affine plane}, I, Mem. Fac. Sci., Kyushu Univ., Ser. A, 36 (1982), 85-105.

% \bibitem[GSP]{GSP} G. Gonzalez-Sprinberg and I. Pan, \emph{On Characteristic Classes of Determinantal Cremona Transformations}. Math. Ann. 335 (2006, 479–487.)

% % \bibitem[GS-LJ]{GSL} G. Gonzalez-Sprinberg and M. Lejeune-Jalabert, \emph{Courbes lisses sur les singularit\'es de surfaces}, C. R. Acad. Sci. Paris, t.318, S\'erie I Math., 653-656 (1994).

% % %\bibitem[GS-LJ2]{GSL2} G. Gonzalez-Sprinberg and M. Lejeune-Jalabert, \emph{Courbes lisses, cycle maximal et points infiniment voisins des singularit\'es de surfaces}, Publ. Mat. Uruguay, 7, 1-27 (1997).


%\bibitem[GW]{GW} K.R. Goodearl, R.B. Warfield Jr., \emph{An Introduction to Noncommutative Noetherian Rings}. London Mathematical Society Student Text, vol. 16, Cambridge University Press, 1989.

 \bibitem[Ham]{Ham} J. E. Humphreys, \emph{Linear Algebraic Groups}, Graduate Texts in Mathematics 21, Springer-Verlag, 1991.

% \bibitem[Ha]{Ha} J. Harris, \emph{The Genus of Space Curves}, Math. Ann. 249  (1980), pp. 191-204.

\bibitem[Har]{Ha} R. Hartshorne, \emph{Algebraic Geometry}, Graduate Text in Mathematics, Springer, 1977.
  
% \bibitem[Hu]{Hu} H.P. Hudson, \emph{Cremona transformation in Plane and Space}. Cambridge at the University Press, (1927).

\bibitem[Jo]{Jo} J.P. Jouanolou, \emph{\'Equations de Pfaff alg\'ebriques}, Lectures Notes in Mathematics, N. 708 (1979).  

\bibitem[Ka1979]{Ka} T. Kambayashi, \emph{Automorphism Group of a Polynomial Ring and Algebraic Group Action on an Affine Space}, J. of Algebra, 60 (1979), p. 439-451 .

\bibitem[Ku]{Ku} S. Kumar, Kac-Moody groups, their flag varieties and representation theory, Progress in Mathematics, 204, Birkhaüser Boston, Inc., Boston, MA, 2002.

\bibitem[Ma1981]{Ma} J. Martinet, \emph{Normalisation des champs de vecteurs holomorphes}, S\'eminaire N. Bourbaki, 1981, exp. No.
564, pp. 55-70.
  
\bibitem[MePa2016]{MePa} L.G. Mendes and I. Pan, \emph{On plane polynomial automorphisms commuting with simple derivations}, Journal of Pure and Applied Algebra Volume 221, Issue 4 (2017), p. 875-882.


%\bibitem[MR]{MR} J.C. MacConnel, J. C. Robson, \emph{Noncommutative noetherian rings}. Wileys-Interscience, New York, 1987.	

\bibitem[LiZa1983]{LiZa} V. Lin and M. Zaidenberg, \emph{An irreducible simply connected curve in $\C^2$ is
equivalent to a quasihomogeneous curve}, Soviet Math. Dokl., 28 (1983), 200-204.

%\bibitem[Ma1981]{Ma} J. Martinet, \emph{Normalisation des champs de vecteurs holomorphes}, S\'eminaire N. Bourbaki, 1981, exp. No. 564, pp. 55-70.

\bibitem[NaNo]{NaNo} M. Nagata, A. Nowicki, \emph{Rings of constants for $k$-derivations in $k[x_1,\ldots,x_n]$}, J. Math. Kyoto University, 28-I (1988), p. 111-118.

\bibitem[No1994]{Now1} A. Nowicki, \emph{Polynomial derivations and their rings of constants}, at ttp://www-users.mat.umk.pl/ anow/ps-dvi/pol-der.pdf

\bibitem[No1996]{Now2} A. Nowicki, \emph{On the nonexistence of rational first integrals for systems of linear differential equations}, Linear Algebra Appl. 235 (1996) 107–120.

% \bibitem[Pa1]{Pa99} I. Pan, \emph{Sur le multidegr\'e des tranformations de Cremona}, C. R. Acad. Sci. Paris, t. 330, Série I (2000), p. 297–300. 

% \bibitem[Pa2]{Pa00} I. Pan, \emph{Les transformations de Cremona stellaires}, Proc. Amer. Math. Soc., Vol. 129, No. 5 (2000), p. 1257-1262. 

% \bibitem[Pa3]{Pa3} I. Pan, \emph{On Cremona transformations of $\P^3$ of all possible bidegree}, C. R. Acad. Sci. Paris, Série I, 351, (2013), p. 467–469. 


% \bibitem[PRV]{PRV} I. Pan, F. Ronga and T. Vust, \emph{Transformations birationnelles quadratiques de l’espace projectif complexe à trois dimensions}, Ann. Inst. Fourier 51 (2001), p. 1153-1187.

% \bibitem[PS]{PS} C. Peskine and L. Szpiro, \emph{Liaison des vari\'et\'es alg\'ebriques I}, Inventiones Math. 26 (1974), p. 271-302.

% \bibitem[PR]{PR} L. Pirio and F. Russo, \emph{Quadro-quadric Cremona transformations in low dimensions via the JC-correspondence}, Ann. Inst. Fourier, Grenoble
% 64, 1 (2014) p. 71-111.

\bibitem[Re1968]{Re} R. Rentschler, \emph{Op\'erations du groupe additif sur le plan affine}, C. R. Acad. Sc. Paris, Vol. 267 (1968) p. 384-387.

% \bibitem[Se]{Se} J. G. Semple, \emph{Cremona transformations of space of four dimensions by means of
% quadrics, and the reverse transformations}, Phil. Trans. of the Royal
% Soc. of London, no. 228 (1929), p. 331-376.

% \bibitem[SR]{SR} J. G. Semple and L. Roth, \emph{Introduction to Algebraic Geometry}, Oxford University Press (1949).

\bibitem[Se1967]{Sei} A. Seidenberg, \emph{Differential ideals in rings of finitely generated type}, American Journal of Mathematics, Vol. 89, No. 1 (1967), pp. 22-42.

%\bibitem[Se1980]{Se} J. P. Serre \emph{Trees}, Springer, 1980.

\bibitem[Shaf1977]{Sh} I. R. Shafarevich \emph{On some infinite dimensional algebraic groups}, in ``Atti-Simposio Inrernaz. Geom. Algebrica, Roma, 1965,'' Edizioni Cremonese, Rome, 1967; Rerrd. Ma:. e Appl. (5) 25 (1966), 208-212

\bibitem[Sham1977]{Sha} A. Shamsuddin, Ph.D. thesis, Univesity of Leeds, 1977.

\bibitem[Shi2007]{Shi} Shaoyun Shi, \emph{On the nonexistence of rational first integrals
for nonlinear systems and semiquasihomogeneous systems}, J. Math. Anal. Appl. 335 (2007) 125–134.

\bibitem[St2012]{St} I. Stampfli, \emph{On the topologies on ind-varieties and related irreducibility questions}, Journal of Algebra 372 (2012) 531-541.
  
% \bibitem[Report]{report} V. Snyder, A. Black, A. Coble, L. Dye, A. Emch, S. Lefschetz, F. R. Sharpe
% and C. Sisam, \emph{Selected topics in algebraic geometry}, second ed., Chelsea Publishing Co. (1970).

% % \bibitem[W]{Wah} J. Wahl, \emph{Derivations, automorphisms and deformations of quasi-homogeneous singularities}, Proc. Symp. Pure Math. 40, part 2, 613-624, (1983).


 \bibitem[vdE]{vdE} A. van den Essen, \emph{Polynomial Automorphisms and the Jacobian Conjecture}, Progress in Mathematics, Vol. 190, Birkh\"auser (2000).  

%\bibitem[We]{We} C. A. Weibel, \emph{An introduction to homological algebra}, Cambridge University Press, 1994.



 \end{thebibliography}
\end{document}